\newtheorem{theorem}{Theorem}
\newtheorem{axiom}[theorem]{Axiom}
\newtheorem{conjecture}[theorem]{Conjecture}
\newtheorem{corollary}[theorem]{Corollary}
\newtheorem{definition}[theorem]{Definition}
\newtheorem{example}[theorem]{Example}
\newtheorem{exercise}[theorem]{Exercise}
\newtheorem{lemma}[theorem]{Lemma}
\newtheorem{proposition}[theorem]{Proposition}
\newtheorem{remark}[theorem]{Remark}
\chardef\@x10\chardef\@xv60
\def\tcitime{
\def\@time{%
  \@minute\time\@hour\@minute\divide\@hour\@xv
  \ifnum\@hour<\@x 0\fi\the\@hour:%
  \multiply\@hour\@xv\advance\@minute-\@hour
  \ifnum\@minute<\@x 0\fi\the\@minute
  }}%
\def\x@hyperref#1#2#3{%
   % Turn off various catcodes before reading parameter 4
   \catcode`\~ = 12
   \catcode`\$ = 12
   \catcode`\_ = 12
   \catcode`\# = 12
   \catcode`\& = 12
   \y@hyperref{#1}{#2}{#3}%
}
\def\y@hyperref#1#2#3#4{%
   #2\ref{#4}#3
   \catcode`\~ = 13
   \catcode`\$ = 3
   \catcode`\_ = 8
   \catcode`\# = 6
   \catcode`\& = 4
}
\def\QCTOpt[#1]#2{%
  \def\QCTOptB{#1}
  \def\QCTOptA{#2}
}
\def\QCTNOpt#1{%
  \def\QCTOptA{#1}
  \let\QCTOptB\empty
}
\def\Qct{%
  \@ifnextchar[{%
    \QCTOpt}{\QCTNOpt}
}
\def\QCBOpt[#1]#2{%
  \def\QCBOptB{#1}%
  \def\QCBOptA{#2}%
}
\def\QCBNOpt#1{%
  \def\QCBOptA{#1}%
  \let\QCBOptB\empty
}
\def\Qcb{%
  \@ifnextchar[{%
    \QCBOpt}{\QCBNOpt}%
}
\def\PrepCapArgs{%
  \ifx\QCBOptA\empty
    \ifx\QCTOptA\empty
      {}%
    \else
      \ifx\QCTOptB\empty
        {\QCTOptA}%
      \else
        [\QCTOptB]{\QCTOptA}%
      \fi
    \fi
  \else
    \ifx\QCBOptA\empty
      {}%
    \else
      \ifx\QCBOptB\empty
        {\QCBOptA}%
      \else
        [\QCBOptB]{\QCBOptA}%
      \fi
    \fi
  \fi
}
\def\GRAPHICSPS#1{%
 \ifcase\GRAPHICSTYPE%\GRAPHICSTYPE=0
   \special{ps: #1}%
 \or%\GRAPHICSTYPE=1
   \special{language "PS", include "#1"}%
%%%\or%\GRAPHICSTYPE=2
%%%  #1%
 \fi
}%
\def\graffile#1#2#3#4{%
%%% \ifnum\GRAPHICSTYPE=\tw@
%%%  %Following if using psfig
%%%  \@ifundefined{psfig}{\input psfig.tex}{}%
%%%  \psfig{file=#1, height=#3, width=#2}%
%%% \else
  %Following for all others
  % JCS - added BOXTHEFRAME, see below
    \bgroup
	   \@inlabelfalse
       \leavevmode
       \@ifundefined{bbl@deactivate}{\def~{\string~}}{\activesoff}%
        \raise -#4 \BOXTHEFRAME{%
           \hbox to #2{\raise #3\hbox to #2{\null #1\hfil}}}%
    \egroup
}%
\def\draftbox#1#2#3#4{%
 \leavevmode\raise -#4 \hbox{%
  \frame{\rlap{\protect\tiny #1}\hbox to #2%
   {\vrule height#3 width\z@ depth\z@\hfil}%
  }%
 }%
}%
\let\nographics=\@msidraft
\newif\ifwasdraft
\def\GRAPHIC#1#2#3#4#5{%
   \ifnum\@msidraft=\@ne\draftbox{#2}{#3}{#4}{#5}%
   \else\graffile{#1}{#3}{#4}{#5}%
   \fi
}
\def\addtoLaTeXparams#1{%
    \edef\LaTeXparams{\LaTeXparams #1}}%
\newif\ifBoxFrame \BoxFramefalse
\newif\ifOverFrame \OverFramefalse
\newif\ifUnderFrame \UnderFramefalse
\def\BOXTHEFRAME#1{%
   \hbox{%
      \ifBoxFrame
         \frame{#1}%
      \else
         {#1}%
      \fi
   }%
}
\def\doFRAMEparams#1{\BoxFramefalse\OverFramefalse\UnderFramefalse\readFRAMEparams#1\end}%
\def\readFRAMEparams#1{%
 \ifx#1\end%
  \let\next=\relax
  \else
  \ifx#1i\dispkind=\z@\fi
  \ifx#1d\dispkind=\@ne\fi
  \ifx#1f\dispkind=\tw@\fi
  \ifx#1t\addtoLaTeXparams{t}\fi
  \ifx#1b\addtoLaTeXparams{b}\fi
  \ifx#1p\addtoLaTeXparams{p}\fi
  \ifx#1h\addtoLaTeXparams{h}\fi
  \ifx#1X\BoxFrametrue\fi
  \ifx#1O\OverFrametrue\fi
  \ifx#1U\UnderFrametrue\fi
  \ifx#1w
    \ifnum\@msidraft=1\wasdrafttrue\else\wasdraftfalse\fi
    \@msidraft=\@ne
  \fi
  \let\next=\readFRAMEparams
  \fi
 \next
 }%
\def\IFRAME#1#2#3#4#5#6{%
      \bgroup
      \let\QCTOptA\empty
      \let\QCTOptB\empty
      \let\QCBOptA\empty
      \let\QCBOptB\empty
      #6%
      \parindent=0pt
      \leftskip=0pt
      \rightskip=0pt
      \setbox0=\hbox{\QCBOptA}%
      \@tempdima=#1\relax
      \ifOverFrame
          % Do this later
          \typeout{This is not implemented yet}%
          \show\HELP
      \else
         \ifdim\wd0>\@tempdima
            \advance\@tempdima by \@tempdima
            \ifdim\wd0 >\@tempdima
               \setbox1 =\vbox{%
                  \unskip\hbox to \@tempdima{\hfill\GRAPHIC{#5}{#4}{#1}{#2}{#3}\hfill}%
                  \unskip\hbox to \@tempdima{\parbox[b]{\@tempdima}{\QCBOptA}}%
               }%
               \wd1=\@tempdima
            \else
               \textwidth=\wd0
               \setbox1 =\vbox{%
                 \noindent\hbox to \wd0{\hfill\GRAPHIC{#5}{#4}{#1}{#2}{#3}\hfill}\\%
                 \noindent\hbox{\QCBOptA}%
               }%
               \wd1=\wd0
            \fi
         \else
            \ifdim\wd0>0pt
              \hsize=\@tempdima
              \setbox1=\vbox{%
                \unskip\GRAPHIC{#5}{#4}{#1}{#2}{0pt}%
                \break
                \unskip\hbox to \@tempdima{\hfill \QCBOptA\hfill}%
              }%
              \wd1=\@tempdima
           \else
              \hsize=\@tempdima
              \setbox1=\vbox{%
                \unskip\GRAPHIC{#5}{#4}{#1}{#2}{0pt}%
              }%
              \wd1=\@tempdima
           \fi
         \fi
         \@tempdimb=\ht1
         %\advance\@tempdimb by \dp1
         \advance\@tempdimb by -#2
         \advance\@tempdimb by #3
         \leavevmode
         \raise -\@tempdimb \hbox{\box1}%
      \fi
      \egroup%
}%
\def\DFRAME#1#2#3#4#5{%
  \hfil\break
  \bgroup
     \leftskip\@flushglue
	 \rightskip\@flushglue
	 \parindent\z@
	 \parfillskip\z@skip
     \let\QCTOptA\empty
     \let\QCTOptB\empty
     \let\QCBOptA\empty
     \let\QCBOptB\empty
	 \vbox\bgroup
        \ifOverFrame 
           #5\QCTOptA\par
        \fi
        \GRAPHIC{#4}{#3}{#1}{#2}{\z@}%
        \ifUnderFrame 
           \break#5\QCBOptA
        \fi
	 \egroup
   \egroup
   \break
}%
\def\FFRAME#1#2#3#4#5#6#7{%
 %If float.sty loaded and float option is 'h', change to 'H'  (gp) 1998/09/05
  \@ifundefined{floatstyle}
    {%floatstyle undefined (and float.sty not present), no change
     \begin{figure}[#1]%
    }
    {%floatstyle DEFINED
	 \ifx#1h%Only the h parameter, change to H
      \begin{figure}[H]%
	 \else
      \begin{figure}[#1]%
	 \fi
	}
  \let\QCTOptA\empty
  \let\QCTOptB\empty
  \let\QCBOptA\empty
  \let\QCBOptB\empty
  \ifOverFrame
    #4
    \ifx\QCTOptA\empty
    \else
      \ifx\QCTOptB\empty
        \caption{\QCTOptA}%
      \else
        \caption[\QCTOptB]{\QCTOptA}%
      \fi
    \fi
    \ifUnderFrame\else
      \label{#5}%
    \fi
  \else
    \UnderFrametrue%
  \fi
  \begin{center}\GRAPHIC{#7}{#6}{#2}{#3}{\z@}\end{center}%
  \ifUnderFrame
    #4
    \ifx\QCBOptA\empty
      \caption{}%
    \else
      \ifx\QCBOptB\empty
        \caption{\QCBOptA}%
      \else
        \caption[\QCBOptB]{\QCBOptA}%
      \fi
    \fi
    \label{#5}%
  \fi
  \end{figure}%
 }%
\def\makeactives{
  \catcode`\"=\active
  \catcode`\;=\active
  \catcode`\:=\active
  \catcode`\'=\active
  \catcode`\~=\active
}
   \gdef\activesoff{%
      \def"{\string"}%
      \def;{\string;}%
      \def:{\string:}%
      \def'{\string'}%
      \def~{\string~}%
      %\bbl@deactivate{"}%
      %\bbl@deactivate{;}%
      %\bbl@deactivate{:}%
      %\bbl@deactivate{'}%
    }
\def\FRAME#1#2#3#4#5#6#7#8{%
 \bgroup
 \ifnum\@msidraft=\@ne
   \wasdrafttrue
 \else
   \wasdraftfalse%
 \fi
 \def\LaTeXparams{}%
 \dispkind=\z@
 \def\LaTeXparams{}%
 \doFRAMEparams{#1}%
 \ifnum\dispkind=\z@\IFRAME{#2}{#3}{#4}{#7}{#8}{#5}\else
  \ifnum\dispkind=\@ne\DFRAME{#2}{#3}{#7}{#8}{#5}\else
   \ifnum\dispkind=\tw@
    \edef\@tempa{\noexpand\FFRAME{\LaTeXparams}}%
    \@tempa{#2}{#3}{#5}{#6}{#7}{#8}%
    \fi
   \fi
  \fi
  \ifwasdraft\@msidraft=1\else\@msidraft=0\fi{}%
  \egroup
 }%
\def\TEXUX#1{"texux"}
\def\func#1{\mathop{\rm #1}\nolimits}%
\long\def\QQQ#1#2{%
     \long\expandafter\def\csname#1\endcsname{#2}}%
\long\def\QQA#1#2{}%
\def\QTR#1#2{{\csname#1\endcsname {#2}}}%
\def\EXPAND#1[#2]#3{}%
\def\NOEXPAND#1[#2]#3{}%
\def\LaTeXparent#1{}%
\def\ChildStyles#1{}%
\def\ChildDefaults#1{}%
\def\QTagDef#1#2#3{}%
  \providecommand{\UNICODE}[2][]{\protect\rule{.1in}{.1in}}
  \providecommand{\U}[1]{\protect\rule{.1in}{.1in}}
\def\QQfnmark#1{\footnotemark}
 \def\abstract{%
  \if@twocolumn
   \section*{Abstract (Not appropriate in this style!)}%
   \else \small 
   \begin{center}{\bf Abstract\vspace{-.5em}\vspace{\z@}}\end{center}%
   \quotation 
   \fi
  }%
   \def\registered{\relax\ifmmode{}\r@gistered
                    \else$\m@th\r@gistered$\fi}%
 \def\r@gistered{^{\ooalign
  {\hfil\raise.07ex\hbox{$\scriptstyle\rm\text{R}$}\hfil\crcr
  \mathhexbox20D}}}}{}%
\newdimen\theight
\def\newfmtname{LaTeX2e}
  \DeclareOldFontCommand{\rm}{\normalfont\rmfamily}{\mathrm}
  \DeclareOldFontCommand{\sf}{\normalfont\sffamily}{\mathsf}
  \DeclareOldFontCommand{\tt}{\normalfont\ttfamily}{\mathtt}
  \DeclareOldFontCommand{\bf}{\normalfont\bfseries}{\mathbf}
  \DeclareOldFontCommand{\it}{\normalfont\itshape}{\mathit}
  \DeclareOldFontCommand{\sl}{\normalfont\slshape}{\@nomath\sl}
  \DeclareOldFontCommand{\sc}{\normalfont\scshape}{\@nomath\sc}
\def\alpha{{\Greekmath 010B}}%
\def\beta{{\Greekmath 010C}}%
\def\gamma{{\Greekmath 010D}}%
\def\delta{{\Greekmath 010E}}%
\def\epsilon{{\Greekmath 010F}}%
\def\zeta{{\Greekmath 0110}}%
\def\eta{{\Greekmath 0111}}%
\def\theta{{\Greekmath 0112}}%
\def\iota{{\Greekmath 0113}}%
\def\kappa{{\Greekmath 0114}}%
\def\lambda{{\Greekmath 0115}}%
\def\mu{{\Greekmath 0116}}%
\def\nu{{\Greekmath 0117}}%
\def\xi{{\Greekmath 0118}}%
\def\pi{{\Greekmath 0119}}%
\def\rho{{\Greekmath 011A}}%
\def\sigma{{\Greekmath 011B}}%
\def\tau{{\Greekmath 011C}}%
\def\upsilon{{\Greekmath 011D}}%
\def\phi{{\Greekmath 011E}}%
\def\chi{{\Greekmath 011F}}%
\def\psi{{\Greekmath 0120}}%
\def\omega{{\Greekmath 0121}}%
\def\varepsilon{{\Greekmath 0122}}%
\def\vartheta{{\Greekmath 0123}}%
\def\varpi{{\Greekmath 0124}}%
\def\varrho{{\Greekmath 0125}}%
\def\varsigma{{\Greekmath 0126}}%
\def\varphi{{\Greekmath 0127}}%
\def\nabla{{\Greekmath 0272}}
\def\FindBoldGroup{%
   {\setbox0=\hbox{$\mathbf{x\global\edef\theboldgroup{\the\mathgroup}}$}}%
}
\def\Greekmath#1#2#3#4{%
    \if@compatibility
        \ifnum\mathgroup=\symbold
           \mathchoice{\mbox{\boldmath$\displaystyle\mathchar"#1#2#3#4$}}%
                      {\mbox{\boldmath$\textstyle\mathchar"#1#2#3#4$}}%
                      {\mbox{\boldmath$\scriptstyle\mathchar"#1#2#3#4$}}%
                      {\mbox{\boldmath$\scriptscriptstyle\mathchar"#1#2#3#4$}}%
        \else
           \mathchar"#1#2#3#4% 
        \fi 
    \else 
        \FindBoldGroup
        \ifnum\mathgroup=\theboldgroup % For 2e
           \mathchoice{\mbox{\boldmath$\displaystyle\mathchar"#1#2#3#4$}}%
                      {\mbox{\boldmath$\textstyle\mathchar"#1#2#3#4$}}%
                      {\mbox{\boldmath$\scriptstyle\mathchar"#1#2#3#4$}}%
                      {\mbox{\boldmath$\scriptscriptstyle\mathchar"#1#2#3#4$}}%
        \else
           \mathchar"#1#2#3#4% 
        \fi     	    
	  \fi}
\newif\ifGreekBold  \GreekBoldfalse
\let\SAVEPBF=\pbf
\def\pbf{\GreekBoldtrue\SAVEPBF}%
  \newcounter{equationnumber}  
  \def\mathletters{%
     \addtocounter{equation}{1}
     \edef\@currentlabel{\theequation}%
     \setcounter{equationnumber}{\c@equation}
     \setcounter{equation}{0}%
     \edef\theequation{\@currentlabel\noexpand\alph{equation}}%
  }
    \def\BibTeX{{\rm B\kern-.05em{\sc i\kern-.025em b}\kern-.08em
                 T\kern-.1667em\lower.7ex\hbox{E}\kern-.125emX}}}{}%
\def\AmS{{\protect\usefont{OMS}{cmsy}{m}{n}%
                A\kern-.1667em\lower.5ex\hbox{M}\kern-.125emS}}}{}%
\def\@@eqncr{\let\@tempa\relax
    \ifcase\@eqcnt \def\@tempa{& & &}\or \def\@tempa{& &}%
      \else \def\@tempa{&}\fi
     \@tempa
     \if@eqnsw
        \iftag@
           \@taggnum
        \else
           \@eqnnum\stepcounter{equation}%
        \fi
     \fi
     \global\tag@false
     \global\@eqnswtrue
     \global\@eqcnt\z@\cr}
\def\TCItag{\@ifnextchar*{\@TCItagstar}{\@TCItag}}
\def\@TCItag#1{%
    \global\tag@true
    \global\def\@taggnum{(#1)}}
\def\@TCItagstar*#1{%
    \global\tag@true
    \global\def\@taggnum{#1}}
\def\ExitTCILatex{\makeatother }
\let\DOTSI\relax
\def\RIfM@{\relax\ifmmode}%
\def\FN@{\futurelet\next}%
\def\iint{\DOTSI\intno@\tw@\FN@\ints@}%
\def\iiint{\DOTSI\intno@\thr@@\FN@\ints@}%
\def\iiiint{\DOTSI\intno@4 \FN@\ints@}%
\def\idotsint{\DOTSI\intno@\z@\FN@\ints@}%
\def\ints@{\findlimits@\ints@@}%
\newif\iflimtoken@
\newif\iflimits@
\def\findlimits@{\limtoken@true\ifx\next\limits\limits@true
 \else\ifx\next\nolimits\limits@false\else
 \limtoken@false\ifx\ilimits@\nolimits\limits@false\else
 \ifinner\limits@false\else\limits@true\fi\fi\fi\fi}%
\def\multint@{\int\ifnum\intno@=\z@\intdots@                          %1
 \else\intkern@\fi                                                    %2
 \ifnum\intno@>\tw@\int\intkern@\fi                                   %3
 \ifnum\intno@>\thr@@\int\intkern@\fi                                 %4
 \int}%                                                               %5
\def\multintlimits@{\intop\ifnum\intno@=\z@\intdots@\else\intkern@\fi
 \ifnum\intno@>\tw@\intop\intkern@\fi
 \ifnum\intno@>\thr@@\intop\intkern@\fi\intop}%
\def\intic@{%
    \mathchoice{\hskip.5em}{\hskip.4em}{\hskip.4em}{\hskip.4em}}%
\def\negintic@{\mathchoice
 {\hskip-.5em}{\hskip-.4em}{\hskip-.4em}{\hskip-.4em}}%
\def\ints@@{\iflimtoken@                                              %1
 \def\ints@@@{\iflimits@\negintic@
   \mathop{\intic@\multintlimits@}\limits                             %2
  \else\multint@\nolimits\fi                                          %3
  \eat@}%                                                             %4
 \else                                                                %5
 \def\ints@@@{\iflimits@\negintic@
  \mathop{\intic@\multintlimits@}\limits\else
  \multint@\nolimits\fi}\fi\ints@@@}%
\def\intkern@{\mathchoice{\!\!\!}{\!\!}{\!\!}{\!\!}}%
\def\plaincdots@{\mathinner{\cdotp\cdotp\cdotp}}%
\def\intdots@{\mathchoice{\plaincdots@}%
 {{\cdotp}\mkern1.5mu{\cdotp}\mkern1.5mu{\cdotp}}%
 {{\cdotp}\mkern1mu{\cdotp}\mkern1mu{\cdotp}}%
 {{\cdotp}\mkern1mu{\cdotp}\mkern1mu{\cdotp}}}%
\def\RIfM@{\relax\protect\ifmmode}
\def\text{\RIfM@\expandafter\text@\else\expandafter\mbox\fi}
\let\nfss@text\text
\def\text@#1{\mathchoice
   {\textdef@\displaystyle\f@size{#1}}%
   {\textdef@\textstyle\tf@size{\firstchoice@false #1}}%
   {\textdef@\textstyle\sf@size{\firstchoice@false #1}}%
   {\textdef@\textstyle \ssf@size{\firstchoice@false #1}}%
   \glb@settings}
\def\textdef@#1#2#3{\hbox{{%
                    \everymath{#1}%
                    \let\f@size#2\selectfont
                    #3}}}
\newif\iffirstchoice@
\def\Let@{\relax\iffalse{\fi\let\\=\cr\iffalse}\fi}%
\def\vspace@{\def\vspace##1{\crcr\noalign{\vskip##1\relax}}}%
\def\multilimits@{\bgroup\vspace@\Let@
 \baselineskip\fontdimen10 \scriptfont\tw@
 \advance\baselineskip\fontdimen12 \scriptfont\tw@
 \lineskip\thr@@\fontdimen8 \scriptfont\thr@@
 \lineskiplimit\lineskip
 \vbox\bgroup\ialign\bgroup\hfil$\m@th\scriptstyle{##}$\hfil\crcr}%
\def\Sb{_\multilimits@}%
\def\endSb{\crcr\egroup\egroup\egroup}%
\def\Sp{^\multilimits@}%
\newdimen\ex@
\def\rightarrowfill@#1{$#1\m@th\mathord-\mkern-6mu\cleaders
 \hbox{$#1\mkern-2mu\mathord-\mkern-2mu$}\hfill
 \mkern-6mu\mathord\rightarrow$}%
\def\leftarrowfill@#1{$#1\m@th\mathord\leftarrow\mkern-6mu\cleaders
 \hbox{$#1\mkern-2mu\mathord-\mkern-2mu$}\hfill\mkern-6mu\mathord-$}%
\def\leftrightarrowfill@#1{$#1\m@th\mathord\leftarrow
\mkern-6mu\cleaders
 \hbox{$#1\mkern-2mu\mathord-\mkern-2mu$}\hfill
 \mkern-6mu\mathord\rightarrow$}%
\def\overrightarrow{\mathpalette\overrightarrow@}%
\def\overrightarrow@#1#2{\vbox{\ialign{##\crcr\rightarrowfill@#1\crcr
 \noalign{\kern-\ex@\nointerlineskip}$\m@th\hfil#1#2\hfil$\crcr}}}%
\def\overleftarrow{\mathpalette\overleftarrow@}%
\def\overleftarrow@#1#2{\vbox{\ialign{##\crcr\leftarrowfill@#1\crcr
 \noalign{\kern-\ex@\nointerlineskip}$\m@th\hfil#1#2\hfil$\crcr}}}%
\def\overleftrightarrow{\mathpalette\overleftrightarrow@}%
\def\overleftrightarrow@#1#2{\vbox{\ialign{##\crcr
   \leftrightarrowfill@#1\crcr
 \noalign{\kern-\ex@\nointerlineskip}$\m@th\hfil#1#2\hfil$\crcr}}}%
\def\underrightarrow{\mathpalette\underrightarrow@}%
\def\underrightarrow@#1#2{\vtop{\ialign{##\crcr$\m@th\hfil#1#2\hfil
  $\crcr\noalign{\nointerlineskip}\rightarrowfill@#1\crcr}}}%
\def\underleftarrow{\mathpalette\underleftarrow@}%
\def\underleftarrow@#1#2{\vtop{\ialign{##\crcr$\m@th\hfil#1#2\hfil
  $\crcr\noalign{\nointerlineskip}\leftarrowfill@#1\crcr}}}%
\def\underleftrightarrow{\mathpalette\underleftrightarrow@}%
\def\underleftrightarrow@#1#2{\vtop{\ialign{##\crcr$\m@th
  \hfil#1#2\hfil$\crcr
 \noalign{\nointerlineskip}\leftrightarrowfill@#1\crcr}}}%
\def\qopnamewl@#1{\mathop{\operator@font#1}\nlimits@}
\let\nlimits@\displaylimits
\def\setboxz@h{\setbox\z@\hbox}
\def\varlim@#1#2{\mathop{\vtop{\ialign{##\crcr
 \hfil$#1\m@th\operator@font lim$\hfil\crcr
 \noalign{\nointerlineskip}#2#1\crcr
 \noalign{\nointerlineskip\kern-\ex@}\crcr}}}}
 \def\rightarrowfill@#1{\m@th\setboxz@h{$#1-$}\ht\z@\z@
  $#1\copy\z@\mkern-6mu\cleaders
  \hbox{$#1\mkern-2mu\box\z@\mkern-2mu$}\hfill
  \mkern-6mu\mathord\rightarrow$}
\def\leftarrowfill@#1{\m@th\setboxz@h{$#1-$}\ht\z@\z@
  $#1\mathord\leftarrow\mkern-6mu\cleaders
  \hbox{$#1\mkern-2mu\copy\z@\mkern-2mu$}\hfill
  \mkern-6mu\box\z@$}
\def\projlim{\qopnamewl@{proj\,lim}}
\def\injlim{\qopnamewl@{inj\,lim}}
\def\varinjlim{\mathpalette\varlim@\rightarrowfill@}
\def\varprojlim{\mathpalette\varlim@\leftarrowfill@}
\def\varliminf{\mathpalette\varliminf@{}}
\def\varliminf@#1{\mathop{\underline{\vrule\@depth.2\ex@\@width\z@
   \hbox{$#1\m@th\operator@font lim$}}}}
\def\varlimsup{\mathpalette\varlimsup@{}}
\def\varlimsup@#1{\mathop{\overline
  {\hbox{$#1\m@th\operator@font lim$}}}}
\def\align{\@verbatim \frenchspacing\@vobeyspaces \@alignverbatim
You are using the "align" environment in a style in which it is not defined.}
\let\csname endalign*\endcsname =\endtrivlist
\def\alignat{\@verbatim \frenchspacing\@vobeyspaces \@alignatverbatim
You are using the "alignat" environment in a style in which it is not defined.}
\let\csname endalignat*\endcsname =\endtrivlist
\def\xalignat{\@verbatim \frenchspacing\@vobeyspaces \@xalignatverbatim
You are using the "xalignat" environment in a style in which it is not defined.}
\let\csname endxalignat*\endcsname =\endtrivlist
\def\gather{\@verbatim \frenchspacing\@vobeyspaces \@gatherverbatim
You are using the "gather" environment in a style in which it is not defined.}
\let\csname endgather*\endcsname =\endtrivlist
\def\multiline{\@verbatim \frenchspacing\@vobeyspaces \@multilineverbatim
You are using the "multiline" environment in a style in which it is not defined.}
\let\csname endmultiline*\endcsname =\endtrivlist
\def\arrax{\@verbatim \frenchspacing\@vobeyspaces \@arraxverbatim
You are using a type of "array" construct that is only allowed in AmS-LaTeX.}
\def\tabulax{\@verbatim \frenchspacing\@vobeyspaces \@tabulaxverbatim
You are using a type of "tabular" construct that is only allowed in AmS-LaTeX.}
\let\csname endarrax*\endcsname =\endtrivlist
\let\csname endtabulax*\endcsname =\endtrivlist
 \def\endequation{%
     \ifmmode\ifinner % FLEQN hack
      \iftag@
        \addtocounter{equation}{-1} % undo the increment made in the begin part
        $\hfil
           \displaywidth\linewidth\@taggnum\egroup \endtrivlist
        \global\tag@false
        \global\@ignoretrue   
      \else
        $\hfil
           \displaywidth\linewidth\@eqnnum\egroup \endtrivlist
        \global\tag@false
        \global\@ignoretrue 
      \fi
     \else   
      \iftag@
        \addtocounter{equation}{-1} % undo the increment made in the begin part
        \eqno \hbox{\@taggnum}
        \global\tag@false%
        $$\global\@ignoretrue
      \else
        \eqno \hbox{\@eqnnum}% $$ BRACE MATCHING HACK
        $$\global\@ignoretrue
      \fi
     \fi\fi
 } 
 \newif\iftag@ \tag@false
 \def\TCItag{\@ifnextchar*{\@TCItagstar}{\@TCItag}}
 \def\@TCItag#1{%
     \global\tag@true
     \global\def\@taggnum{(#1)}}
 \def\@TCItagstar*#1{%
     \global\tag@true
     \global\def\@taggnum{#1}}
     \def\tag{\@ifnextchar*{\@tagstar}{\@tag}}
     \def\@tag#1{%
         \global\tag@true
         \global\def\@taggnum{(#1)}}
     \def\@tagstar*#1{%
         \global\tag@true
         \global\def\@taggnum{#1}}
\def\dfrac#1#2{{\displaystyle {#1 \over #2}}}%
\subjclass[2010]{Primary  37C40 ; Secondary 37A30, 	37J40 ,	37A45, 37C05, 37C75, 37E45 }
\keywords{Linear response, statistical stability, rotations, circle diffeomorphisms, KAM theory, discretizations}
\begin{document}

\title[Quantitative statistical stability and linear response ...]{Quantitative statistical stability and linear response for irrational
rotations and diffeomorphisms of the circle}
\author{Stefano Galatolo} %EndAName
\address{
Dipartimento di Matematica, Universit\`a di Pisa, Largo Bruno Pontecorvo 5,
56127 Pisa, Italy email: stefano.galatolo@unipi.it}
 \author{Alfonso Sorrentino} 
\address{
Dipartimento di Matematica, Universit\`a degli Studi di Roma ``Tor Vergata",
Via della ricerca scientifica 1, 00133, Roma, Italy. Email:
sorrentino@mat.uniroma2.it} 
\date{\today }
\maketitle

\begin{abstract}
 We prove quantitative
statistical stability results for a large class of small $C^{0}$ perturbations \ of
circle diffeomorphisms with irrational rotation numbers. We show that if the rotation number is Diophantine the
invariant measure varies in a H\"older way under perturbation of the map and
the H\"older exponent depends on the Diophantine type of the rotation number.
The set of admissible perturbations includes the ones coming from spatial discretization and hence numerical truncation.
We also show linear response for smooth perturbations that preserve the
rotation number, as well as for more general ones. 
This is done {by means of} classical tools from KAM theory,
while the quantitative stability results are obtained by transfer operator
techniques applied to suitable spaces of measures with a weak topology.
\end{abstract}

\section{Introduction}

Understanding the statistical properties of a certain dynamical system is of
fundamental importance in many problems coming from pure and applied
mathematics, as well as in developing applications to other sciences.

\medskip

In this article, we will focus on the concept of \textit{statistical
stability} of a dynamical system, \textit{i.e.}, how its statistical
features change when the systems is perturbed or modified. The interest in
this question is clearly motivated by the need of controlling how much, and
to which extent, approximations, external perturbations and uncertainties can affect the qualitative
and quantitative analysis of its dynamics.

\medskip

Statistical properties of the long-term evolution of a system are reflected,
for instance, by the properties of its invariant measures. When the system
is perturbed, it is then useful to understand, and be able to predict, how
the relevant\footnote{%
The concept of \textit{relevant} is strictly related to the analysis that is
carried out. Hereafter, we will be interested in so called \emph{\ physical
measures} (see footnote \ref{notap} or \cite{Y}). In other contexts, other
kinds of measures might be considered, for example, the so-called measures
of maximal entropy.} invariant measures change by the
effect of the perturbation, \textit{i.e.}, what is called the \textit{%
response} of the system to the perturbation. In particular, it becomes
important to get quantitative estimates on their change by effect of the
perturbation, as well as understanding the \textit{regularity} of their
behavior, for instance differentiability, Lipschitz or H\"{o}lder
dependence, etc...

\medskip

These ideas can be applied to many kinds of systems and these concepts can
be studied in many different ways. In this paper we will consider \emph{%
discrete deterministic dynamical systems} and \emph{\ deterministic
perturbations. }

\smallskip

More specifically, we will consider systems of the kind $(X,T_{0})$, where $%
X $ is a compact metric space and $T_{0}:X\rightarrow X$ a map, whose
iterations determine the dynamics; we investigate perturbed systems $%
\{(X,T_{\delta })\}_{\delta \in \lbrack 0,\overline{\delta })}$, where $%
T_{\delta }:X\rightarrow X$ are such that $T_{\delta }\rightarrow T_{0}$, as 
$\delta \rightarrow 0$, in some suitable topology.

\smallskip

For each $\delta \in \lbrack 0,\overline{\delta })$ let $\mu _{\delta }$ be
an invariant Borel probability measure for the system $(X,T_{\delta })$; we
aim to get information on the regularity of this family of measures, by
investigating the regularity of the map $\delta \longmapsto \mu _{\delta }$.
This notion of regularity might depend on the topology with which the space
of measures is equipped. In this paper we will be interested in absolutely
continuous measures with the $L^{1}$ norm, as well as in the whole space of
Borel probability measures ${\mathcal{P}}(X)$, endowed with a suitable weak norm,
see subection \ref{sec1.1} for more details.

\medskip

We say that $(X,T_{0},\mu _{0})$ is \emph{statistically stable} (with
respect to the considered class of perturbations) if this map is continuous
at $\delta =0$ (with respect to the chosen topology on the space of measures
in which $\mu_0$ is perturbed). \emph{Quantitative statistical stability} is
provided by quantitative estimates on its modulus of continuity.

\smallskip

Differentiability of this map at $\delta =0$ is referred to by saying that
the system has \emph{linear response } to a certain class of perturbations.
Similarly, higher derivatives and higher degrees of smootheness can be
considered.

\medskip

These questions are by now well understood in the case of uniformly
hyperbolic systems, where it has been established Lipschitz and, in some
cases, differentiable dependence of the relevant (physical) invariant
measures with respect to the considered perturbation (see, for example, \cite%
{BB} for a recent survey on linear response under deterministic
perturbations, or the introduction in \cite{GS} for a survey focused on
higher-order terms in the response and for results in the stochastic
setting).

\smallskip

For systems having not a uniformly hyperbolic behavior, in presence of
discontinuities, or more complicated perturbations, much less is known and
results are limited to particular classes of systems; see, for instance, 
\cite{ASsu} for a general survey and \cite{A}, \cite{AV} ,\cite{BV}, \cite%
{BT}, \cite{BBS}, \cite{BKL}, \cite{BK2}, \cite{BS}, \cite{BS2}, \cite{BS2}, \cite{Dol}, \cite{D2}, 
\cite{D3}, \cite{GL}, \cite{Gmann}, \cite{Gpre}, \cite{Ko}, \cite{KL}, \cite%
{met}, \cite{Lin}, \cite{LS}, \cite{SV}, \cite{zz} for other results %
{about statistical stability} for different classes of
systems.
We point out a particular kind of deterministic perturbation which will be considered in this paper: the spatial discretization. In this perturbation, one considers a discrete set in the phase space and replaces the map $T$ with its composition with a projection to this discrete set.
This is what happens for example when we simulate the behavior of a system by iterating a map on our computer, which has a finite resolution and each iterate is subjected to numerical truncation. This perturbation changes the system into a periodic one, destroying many features of the original dynamics, yet this kind of simulations are quite reliable in many cases when the resolution is large enough and are widely used in the applied sciences. Why and under which assumptions these simulations are reliable or not is an important mathematical problem, which is still largely unsolved. Few
rigorous results have been found so far about the stability under spatial dicretization (see e.g. \cite{Bo},  \cite{GB}, \cite{Gu}, \cite{Gu2}, \cite{mier}). We refer to Section \ref{sectrunc} for a more detailed discussion on the subject.

\smallskip

The majority of results on statistical stability are established for systems
that are, in some sense, \textit{chaotic}. There is indeed a general
relation between the speed of convergence to the equilibrium of a system
(which reflects the speed of \textit{mixing}) and the quantitative aspects of its statistical stability
(see \cite{Gpre}, Theorem 5).

\medskip

In this paper we consider a class of systems that are not chaotic at all,
namely the {\emph{diffeomorphisms of the circle}}. We believe that they provide 
a good  model to start pushing forward this analysis.
{In particular, we will start our discussion by investigating the case of {\it rotations of the circle}, and then explaining how to generalize the results to the case of circle diffeomorphisms (see section \ref{sec:stabdiff}).}

\medskip

We prove the following results.

\begin{enumerate}
\item The statistical stability of irrational rotations under perturbations
that are small in the uniform convergence topology. Here stability is proved
with respect to a weak norm on the space ${\mathcal{P}}(X)$, related to the
so-called Wassertein distance; see Theorem \ref{statstab}.

\item H\"{o}lder statistical stability for Diophantine rotations under the
same kind of perturbations, where the H\"{o}lder exponent depends on the
Diophantine type of the rotation number. See Theorem \ref{stst2} for the
general upper bounds\ and Proposition \ref{berlusconi} for examples showing
these bounds are in some sense sharp.

\item Differentiable behavior and linear response for Diophantine rotations,
under smooth perturbations that preserve the rotation number; for general
smooth perturbations the result still holds, but for a Cantor set of
parameters (differentiability in the sense of Whitney); see Theorem \ref{KAMandResp} and Corollary \ref{corKAM}.

\item We extend these qualitative and quantitative stability results to diffeomorphisms of the circle {satisfying suitable assumptions}; see Theorems \ref{stadiff} and \ref{quantdiff}.

\item We prove the statistical stability of  diffeomorphisms of the circle 
under spatial discretizations and numerical truncations, also providing quantitative estimates on the ''error'' introduced by the discretization.

\end{enumerate}

{We believe that the general statistical stability picture here described for rotations is analogous to the one found, in different settings, for example in  \cite{BBS, BS2, BS1, LS} (see also \cite[Section 4]{BB}), where one has a smooth behavior for the response of statistical properties of the system to perturbations not changing the topological class of the system ({\it i.e.}, changing the system to a topologically conjugated one), while we have less regularity, and in particular H\"{o}lder behavior, if the perturbation is allowed to change it. In our case, the rotation number plays the role of determining the topological class of the system.}

Some comments on the methodology used to establish these results. As far as
items 1 and 2 are concerned, we remark that since rotations are not mixing,
the general relation between the speed of convergence to the equilibrium and
their statistical stability, that we have recalled above, cannot be applied.
However, we can perform some analogous construction considering the speed of
convergence to the equilibrium of the Ces\`{a}ro averages of the iterates of
a given measure, which leads to a measure of the speed of convergence of the
system to its ergodic behavior (see Lemma \ref{stablemma}). Quantitative
estimates of this speed of the convergence -- and hence our quantitative
stability statement, Theorem \ref{stst2} -- are obtained by means of the
so-called Denjoy-Koksma inequality (see Theorem \ref{DK}).

\smallskip 

On the other hand, results in item 3 are obtained as an application of KAM
theory for circle maps (see Theorem \ref{KAMVano}), with a particular focus
on the dependence of the KAM-construction on the perturbative parameter. In
Section \ref{KAMsection} we provide a brief introduction on this subject.%
\newline

The extension of the statistical stability results established for rotations
to{ circle diffeomorphisms} (item 4) is done again by
{combining our results for irrational rotations with the general theory of linearization of circle diffeomorphims, including Denjoy theorem, KAM theory and Herman-Yoccoz general theory (see section \ref{secconj})}.

The final application to spatial discretizations is obtained as  corollary of these statements, which -- thanks to the rather weak assumptions on the perturbations -- are suitable to deal with this particularly
difficult kind of setting.\\

{As a final remark, although we have decided to present our results in the framework of circle diffeomorphisms and rotations of the circle, we believe that the main ideas present in our constructions can be naturally applied to extend these results to rotations on higher dimensional tori. }\\

\noindent \textbf{Organization of the article.}
In Section \ref{sec1} after introducing some  tools from number theory and geometric measure theory we prove  qualitative and quantitative  statistical stability  of irrational rotations. The quantitative stability results are proved first by establishing general H\"older upper bounds in subsection \ref{ub} and then exhibiting particular small perturbations for which we actually have H\"older behavior, hence establishing lower bounds in section \ref{lob}. 

 In Section \ref{KAMsection}, after a brief introduction to  KAM theory and to the problem of smooth linearization of circle diffeomorphisms, we prove linear response results for suitable deterministic perturbations of Diophantine rotations.
 
  In Section \ref{sec:stabdiff} we show how to extend the results of Section \ref{sec1} to sufficiently smooth circle diffeomorphisms. 
  
  Finally, in Section \ref{sectrunc} we introduce a class of perturbations coming from spatial discretization and apply our previous results to this kind of perturbations, obtaining some qualitative and
 quantitative results.  
\newline

\noindent \textbf{Acknowledgments.} The authors are grateful to A. Celletti, R. de la Llave, P-A Guiheneuf,  C. Liverani, M. Sevryuk for their helpful suggestions. The authors also thank R. Calleja, A. Alessandra and R. de la Llave 
 for sharing with them their results in \cite{CCdL}.\\
 S.G. and A.S. have been partially supported by the
research project  PRIN Project 2017S35EHN ``{\it Regular and stochastic behavior in
dynamical systems}'' of the Italian Ministry of Education and Research (MIUR). 
AS also acknowledges the support of the MIUR Department of Excellence grant  CUP E83C18000100006.
\newline

\bigskip

\section{Statistical stability of irrational rotations}

\label{sec1}

Irrational rotations on the circle preserve the Lebesgue measure $m$ on the
circle {${\mathbb{S}}^1:= {\mathbb{R}}/{\mathbb{Z}}$} and are well known for
being uniquely ergodic. It is easy to see that small perturbations of such
rotations may have singular invariant measures {(\textit{i.e.}, not
absolutely continuous with respect to $m$)}, even supported on a discrete
set (see examples in Section \ref{lob}). However, we will show that these
measures must be {close, in some suitable sense,} to $m$.

\subsection{Weak statistical stability of irrational rotations}

\label{sec1.1}

In this section, we aim to prove a statistical stability result for
irrational rotations in a weak sense; more specifically, {we show that by
effect of small natural perturbations, their invariant measures vary
continuously with respect to the so-called Wassertein distance.}
{This qualitative result might not be surprising for experts, however
  the construction that we apply also leads to  quantitative estimates on the
statistical stability, which will be presented in the next subsections.}

\medskip

{Let us first recall some useful notions that we are going to use in the
following}. Let $(X,d)$ be a compact metric space and let ${\mathcal{M}}(X)$
denote the set of signed {finite} Borel measures on $X$. 
If $g:X\longrightarrow \mathbb{R}$ is a Lipschitz function, we denote its
(best) Lipschitz constant by $\mathrm{Lip}(g)$, \textit{i.e.} 
\begin{equation*}
\displaystyle{\mathrm{Lip}(g):=\sup_{x,y\in X,x\neq y}\left\{ \dfrac{%
|g(x)-g(y)|}{d(x,y)}\right\} }.
\end{equation*}

\smallskip

\begin{definition}
\label{w} Given $\mu, \nu \in {\mathcal{M}}(X) $ we define the \textbf{%
Wasserstein-Monge-Kantorovich} distance between $\mu $ and $\nu $ by%
\begin{equation}
W(\mu ,\nu ):=\sup_{\mathrm{Lip}(g)\leq 1,{\mathcal{M}}g{\mathcal{M}}%
_{\infty }\leq 1}\left\vert \int_{\mathbb{S}^1} {g}d\mu -\int_{\mathbb{S}^1} 
{g}d\nu \right\vert .
\end{equation}
We denote%
\begin{equation*}
\|\mu \|_{W}:=W(0,\mu ),
\end{equation*}%
{where $0$ denotes the trivial measure identically equal to zero.} $\|\cdot
\|_{W}$ defines a norm on the vector space of signed measures defined on a
compact metric space.
\end{definition}

{We refer the reader, for example, to \cite{AGS} for a more systematic and detailed description of these topics.}%
\newline

\medskip

Let $T:X\rightarrow X$ be a Borel measurable map. Define the linear
functional 
\begin{equation*}
L_{T}:{\mathcal{M}}(X)\rightarrow {\mathcal{M}}(X)
\end{equation*}%
that to a measure $\mu \in {\mathcal{M}}(X)$ associates the new measure $%
L_{T}\mu$, satisfying $L_{T}\mu (A):=\mu (T^{-1}(A)) $ for every Borel set $%
A\subset X$; {$L_{T}$ will be called \textit{transfer operator} (observe
that $L_{T}\mu$ is also called the {push-forward of $\mu$ by $T$} and
denoted by $T_*\mu$)}. If follows easily from the definition, that invariant
measures correspond to fixed points of $L_{T}$, \textit{i.e.}, $L_{T}\mu
=\mu $. 

\medskip

We are now ready to state our first statistical stability result for
irrational rotations.

\begin{theorem}[Weak statistical stability of irrational rotations.]
\label{statstab} Let $R_{\alpha }:{{\mathbb{S}}^{1}\rightarrow {\mathbb{S}}%
^{1}}$ be an irrational rotation. Let $\{T_{\delta }\}_{0\leq \delta \leq 
\overline{\delta }}$ be a family of Borel probability measurable maps of ${\mathbb{S}}^1$
to itself such that%
\begin{equation*}
\sup_{x\in {\mathbb{S}}^{1}}|R_{\alpha }(x)-T_{\delta }(x)|\leq \delta .
\end{equation*}%
Suppose $\mu _{\delta }$ is an invariant measure\footnote{%
In the case when $T_{\delta }$ is continuous such measures must exist by the
Krylov-Bogoliubov theorem { \cite{KB}}. In other cases such
measures can be absent, in this case our statement is empty.} of $T_{\delta
} $. Then 
\begin{equation*}
\lim_{\delta \rightarrow 0}\| m-\mu _{\delta }\|_{W}=0.
\end{equation*}
\end{theorem}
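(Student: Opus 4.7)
Although $R_\alpha$ is not mixing, it is uniquely ergodic, so the Birkhoff averages of every continuous function converge to $\int \cdot \, dm$ \emph{uniformly} in the starting point. Dually, the Ces\`aro averages of the transfer operator applied to any probability measure converge to $m$ in the Wasserstein norm, uniformly. The plan is to exploit this fact, combined with the invariance of $\mu_\delta$ under $T_\delta$, via a telescoping comparison between the Ces\`aro averages associated to $T_\delta$ and to $R_\alpha$.

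First, for a Borel map $T$, set
\[
S_n^{(T)}\nu := \frac{1}{n}\sum_{k=0}^{n-1} L_T^{k}\nu.
\]
Since $L_{T_\delta}\mu_\delta = \mu_\delta$, we have $S_n^{(T_\delta)}\mu_\delta = \mu_\delta$, and hence by the triangle inequality
\[
\|\mu_\delta - m\|_W \;\le\; \bigl\|S_n^{(T_\delta)}\mu_\delta - S_n^{(R_\alpha)}\mu_\delta\bigr\|_W + \bigl\|S_n^{(R_\alpha)}\mu_\delta - m\bigr\|_W.
\]
The aim is to make each summand small by choosing $n$ large and then $\delta$ small.

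To bound the first summand, observe that if $g$ is $1$-Lipschitz with $\|g\|_\infty\le 1$, then
\[
\Bigl|\!\int g \, d(L_{T_\delta}\nu - L_{R_\alpha}\nu)\Bigr| = \Bigl|\!\int (g\circ T_\delta - g\circ R_\alpha)\,d\nu\Bigr| \le \sup_x |T_\delta(x)-R_\alpha(x)| \le \delta,
\]
so $\|L_{T_\delta}\nu - L_{R_\alpha}\nu\|_W \le \delta$ for every probability measure $\nu$. Next, because $R_\alpha$ is an isometry of $\mathbb{S}^1$, the map $g\mapsto g\circ R_\alpha$ is a bijection of the unit ball of $1$-Lipschitz functions with $\|g\|_\infty \le 1$, so $L_{R_\alpha}$ is an isometry for $\|\cdot\|_W$. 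A telescoping identity, together with $L_{T_\delta}^j\mu_\delta = \mu_\delta$, then gives
\[
L_{R_\alpha}^k \mu_\delta - \mu_\delta = \sum_{j=0}^{k-1} L_{R_\alpha}^{k-1-j}\bigl(L_{R_\alpha}-L_{T_\delta}\bigr)\mu_\delta,
\]
which in norm yields $\|L_{R_\alpha}^k\mu_\delta - \mu_\delta\|_W \le k\delta$ and therefore
\[
\bigl\|S_n^{(R_\alpha)}\mu_\delta - S_n^{(T_\delta)}\mu_\delta\bigr\|_W \le \frac{1}{n}\sum_{k=0}^{n-1} k\delta \le \frac{n\delta}{2}.
\]

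To bound the second summand, I use that $R_\alpha$ is uniquely ergodic: for any continuous $g$, the averages $\frac{1}{n}\sum_{k=0}^{n-1} g(R_\alpha^k x)$ converge to $\int g\, dm$ uniformly in $x\in\mathbb{S}^1$. Integrating against any probability measure $\nu$, this says $\int g\, dS_n^{(R_\alpha)}\nu \to \int g\, dm$ uniformly in $\nu$; equivalently, $S_n^{(R_\alpha)}\nu \to m$ in the bounded-Lipschitz norm $\|\cdot\|_W$ uniformly over probability measures $\nu$ (on the compact space $\mathbb{S}^1$, this norm metrizes weak-$*$ convergence). Thus, given $\varepsilon>0$, first fix $n$ large enough that $\|S_n^{(R_\alpha)}\mu_\delta - m\|_W < \varepsilon/2$ uniformly in $\mu_\delta$, then choose $\delta$ so small that $n\delta/2 < \varepsilon/2$. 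Combining the two estimates finishes the proof. The only delicate point is the uniformity in Step 3, which is exactly the content of unique ergodicity.
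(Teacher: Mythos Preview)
Your proof is correct and follows essentially the same strategy as the paper: the same Ces\`aro-average decomposition, the same telescoping identity combined with the fact that $L_{R_\alpha}$ is a $\|\cdot\|_W$-isometry to control the first term, and unique ergodicity to control the second. The only minor difference is that the paper establishes the uniform Ces\`aro convergence (your second summand) by approximating $\mu_\delta$ with convex combinations of Dirac masses, whereas you invoke the uniform-in-$x$ convergence of Birkhoff averages directly; note, however, that your passage from ``$\int g\,dS_n^{(R_\alpha)}\nu\to\int g\,dm$ uniformly in $\nu$, for each $g$'' to ``$\|S_n^{(R_\alpha)}\nu-m\|_W\to 0$ uniformly in $\nu$'' tacitly uses the Arzel\`a--Ascoli compactness of the unit Lipschitz ball in $C^0(\mathbb{S}^1)$, which is worth making explicit.
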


\bigskip

{Let us start with the following preliminary computation.}

\begin{lemma}
\label{stablemma}Let $L$ be the transfer operator associated to an isometry
of $\ {\mathbb{S}}^{1}$ and let $L_{\delta }$ be the transfer operator
associated to a measurable map $T_{\delta }$. Suppose that $\mu _{\delta
}=L_{\delta }\mu _{\delta }.$ Then, for each $n\geq 1$ 
\begin{equation}
\|\mu _{\delta }-m\|_{W} \;\leq\; \big\| m-\frac{1}{n}\sum_{{1\leq }i\leq
n}L^{i}\mu _{\delta } \big\|_{W} \;+\; \frac{(n-1)}{2} \; \big\|(L-L_{\delta
})\mu _{\delta } \big\|_{W}
\end{equation}%
where {$L^i := L \circ \ldots \circ L$ ($i$-times)}.
\end{lemma}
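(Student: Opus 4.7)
The plan is to exploit the invariance $L_\delta\mu_\delta=\mu_\delta$ to rewrite $\mu_\delta$ as a Ces\`aro average of its own iterates under $L_\delta$, and then compare this to the corresponding Ces\`aro average under $L$, using a telescoping estimate together with the fact that $L$, coming from an isometry of $\mathbb{S}^1$, is non-expansive in the Wasserstein norm.

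More precisely, since $L_\delta^i\mu_\delta=\mu_\delta$ for every $i\geq 0$, one has the identity $\mu_\delta=\frac{1}{n}\sum_{i=1}^n L_\delta^i\mu_\delta$. Subtracting $m$ from both sides and adding and subtracting $\frac{1}{n}\sum_{i=1}^n L^i\mu_\delta$ gives the decomposition
\begin{equation*}
\mu_\delta-m \;=\; \Bigl(\tfrac{1}{n}\sum_{i=1}^n L^i\mu_\delta-m\Bigr)+\tfrac{1}{n}\sum_{i=1}^n (L_\delta^i-L^i)\mu_\delta .
\end{equation*}
The first summand is precisely the quantity whose $W$-norm appears on the right-hand side of the lemma. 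For the second summand I would apply the standard telescoping identity
\begin{equation*}
L_\delta^i-L^i \;=\; \sum_{k=0}^{i-1} L^{\,i-1-k}(L_\delta-L)\,L_\delta^{\,k},
\end{equation*}
and use once more $L_\delta^k\mu_\delta=\mu_\delta$ to reduce $(L_\delta^i-L^i)\mu_\delta$ to a sum of $i$ terms of the form $L^{i-1-k}(L_\delta-L)\mu_\delta$.

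The key observation that makes the estimate quantitative is that the transfer operator of an isometry of $\mathbb{S}^1$ is a (Wasserstein) non-expansion: if $T$ is an isometry, then for any test function $g$ the composition $g\circ T$ has the same Lipschitz constant and the same $L^\infty$ norm as $g$, so directly from Definition \ref{w} one gets $\|L\nu\|_W\leq\|\nu\|_W$ for every signed measure $\nu$. Consequently,
\begin{equation*}
\bigl\|(L_\delta^i-L^i)\mu_\delta\bigr\|_W \;\leq\; i\,\bigl\|(L_\delta-L)\mu_\delta\bigr\|_W ,
\end{equation*}
and averaging over $i$ together with the triangle inequality yields the desired bound with a constant of the form $\frac{1}{n}\sum_i i$, which after reindexing the Ces\`aro sum is exactly $\frac{n-1}{2}$.

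There is no serious obstacle here; the only subtle point is recognising that the Wasserstein norm is the right norm in which $L$ behaves as a non-expansion, so that the lengths of the telescoping chains grow only linearly in $i$ rather than being amplified by the operator. With the weak norm $\|\cdot\|_W$ this works uniformly for all iterates, and the factor $\frac{n-1}{2}$ is the arithmetic average of $0,1,\ldots,n-1$.
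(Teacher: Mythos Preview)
Your proposal is correct and follows exactly the same route as the paper: rewrite $\mu_\delta-m$ via Ces\`aro averages, telescope $L_\delta^i-L^i$, and use that the transfer operator of an isometry does not expand $\|\cdot\|_W$. The only point worth making precise is your ``reindexing'' step: starting the Ces\`aro sum at $i=0$ rather than $i=1$ is what turns the constant $\tfrac{1}{n}\sum_{i=1}^n i=\tfrac{n+1}{2}$ into $\tfrac{1}{n}\sum_{i=0}^{n-1} i=\tfrac{n-1}{2}$, and the first term on the right is unchanged because for an isometry $L$ actually \emph{preserves} $\|\cdot\|_W$ (the map $g\mapsto g\circ T$ is a bijection on the admissible test functions), so that $\big\|m-\tfrac1n\sum_{i=0}^{n-1}L^i\mu_\delta\big\|_W=\big\|m-\tfrac1n\sum_{i=1}^{n}L^i\mu_\delta\big\|_W$.
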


\medskip

\begin{proof}
The proof is a direct computation. Since $\mu _{\delta }=L_{\delta }\mu
_{\delta }$ and $m$ \ is invariant for $L$, then 
\begin{eqnarray}  \label{prodi}
\|\mu _{\delta }-m\|_{W} &\leq & \big \| \frac{1}{n}\sum_{1\leq i\leq
n}L_{\delta }^{i}\mu _{\delta }-\frac{1}{n}\sum_{1\leq i\leq n}L^{i}m \big\|%
_{W}  \notag \\
&\leq & \big\|\frac{1}{n}\sum_{1\leq i\leq n}L^{i}(m-\mu _{\delta }) \big\|%
_{W}+\big\|\frac{1}{n}\sum_{1\leq i\leq n}(L^{i}-L_{\delta }^{i})\mu
_{\delta }\big\|_{W}.
\end{eqnarray}%
Since 
\begin{equation*}
L^{i}-L_{\delta }^{i}=\sum_{k=1}^{i}L^{i-k}(L-L_{\delta })L_{\delta }^{k-1}
\end{equation*}%
then%
\begin{eqnarray*}
(L^{i}-L_{\delta }^{i})\mu _{\delta } &=&\sum_{k=1}^{i}L^{i-k}(L-L_{\delta
})L_{\delta }^{k-1}\mu _{\delta } \\
&=&\sum_{k=1}^{i}L^{i-k}(L-L_{\delta })\mu _{\delta }.
\end{eqnarray*}

Being $L$ is the transfer operator associated to an isometry, then 
\begin{equation}\label{mis}
\|L^{i-k}(L-L_{\delta })\mu _{\delta }\|_{W}\leq \|(L-L_{\delta })\mu _{\delta
}\|_{W}
\end{equation}
and consequently 
\begin{equation*}
{\Vert }(L^{i}-L_{\delta }^{i})\mu _{\delta }{\Vert _{W}}\leq
(i-1)\|(L-L_{\delta })\mu _{\delta }\|_{W}.
\end{equation*}
Substituting in \eqref{prodi}, we conclude 
\begin{equation*}
\|\mu _{\delta }-m\|_{W}\leq \big\|\frac{1}{n}\sum_{1\leq i\leq
n}L^{i}(m-\mu _{\delta })\big\|_{W}+\frac{(n-1)}{2}\|(L-L_{\delta })\mu
_{\delta }\|_{W}.
\end{equation*}
\end{proof}

\bigskip

\begin{lemma}
\label{prv1}Under the assumptions of Theorem \ref{statstab}, let $\{\mu
_{\delta }\}_{0\leq \delta \leq \overline{\delta }}$ be a family of Borel {probability}
measures on $\mathbb{S}^{1},$ then 
\begin{equation*}
\lim_{n\rightarrow \infty } \big \|m-\frac{1}{n}\sum_{1\leq i\leq n}L^{i}\mu
_{\delta } \big\|_{W}=0
\end{equation*}%
uniformly in $\delta$; namely, for every $\varepsilon>0$ there exists $%
\overline{n} = \overline{n}(\varepsilon)$ such that if $n\geq \overline{n}$
then 
\begin{equation*}
\sup_{0 \leq \delta \leq \overline{\delta}} \big\| m-\frac{1}{n}\sum_{1\leq
i\leq n}L^{i}\mu _{\delta } \big\|_{W} \leq \varepsilon.
\end{equation*}
\end{lemma}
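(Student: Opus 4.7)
The plan is to reduce the statement to the classical unique-ergodicity property of the irrational rotation $R_\alpha$, and use the compactness (Arzelà–Ascoli) of the unit ball in the Lipschitz norm to upgrade pointwise unique-ergodic convergence to convergence that is uniform over the class of test functions used to define $\|\cdot\|_W$. Once this is done, the uniformity in $\delta$ comes essentially for free from the fact that each $\mu_\delta$ is a probability measure.

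First I would unwind the definitions: for any $g\colon \mathbb{S}^1\to\mathbb{R}$ with $\mathrm{Lip}(g)\le 1$ and $\|g\|_\infty\le 1$, since $L^i\mu_\delta$ is the pushforward of $\mu_\delta$ under $R_\alpha^i$, one has
\[
\left|\int g\,d\Bigl(\tfrac{1}{n}\sum_{i=1}^n L^i\mu_\delta\Bigr)-\int g\,dm\right|
=\left|\int\Bigl(\tfrac{1}{n}\sum_{i=1}^n g(R_\alpha^i x)-\int g\,dm\Bigr)d\mu_\delta(x)\right|.
\]
Since $\mu_\delta$ is a probability measure, the right-hand side is bounded by
\[
\sup_{x\in\mathbb{S}^1}\left|\tfrac{1}{n}\sum_{i=1}^n g(R_\alpha^i x)-\int g\,dm\right|,
\]
and this bound is independent of $\mu_\delta$. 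Hence it suffices to show that this supremum tends to $0$ as $n\to\infty$, uniformly over the class
\[
\mathcal{G}:=\{g\in C(\mathbb{S}^1):\,\mathrm{Lip}(g)\le 1,\ \|g\|_\infty\le 1\}.
\]

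The next step is to invoke unique ergodicity of $R_\alpha$: since $\alpha$ is irrational, $R_\alpha$ is uniquely ergodic with invariant measure $m$, and therefore for every continuous $g$ the Birkhoff averages $\tfrac{1}{n}\sum_{i=1}^n g\circ R_\alpha^i$ converge to $\int g\,dm$ uniformly in $x$. This gives pointwise convergence in $g\in\mathcal{G}$, but we need uniformity in $g$.

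To obtain this, I would observe that $\mathcal{G}$ is uniformly bounded and equicontinuous, hence relatively compact in $C(\mathbb{S}^1)$ by Arzelà–Ascoli. Given $\varepsilon>0$, pick a finite $\varepsilon/3$-net $g_1,\dots,g_k\in\mathcal{G}$ with respect to $\|\cdot\|_\infty$. Choose $\overline{n}=\overline{n}(\varepsilon)$ such that for all $n\ge\overline{n}$ and all $j=1,\dots,k$,
\[
\sup_{x\in\mathbb{S}^1}\left|\tfrac{1}{n}\sum_{i=1}^n g_j(R_\alpha^i x)-\int g_j\,dm\right|\le \varepsilon/3.
\]
For an arbitrary $g\in\mathcal{G}$, pick $g_j$ with $\|g-g_j\|_\infty\le\varepsilon/3$; then a triangle inequality (applied twice, once in the Birkhoff average and once in the integral) yields
\[
\sup_{x\in\mathbb{S}^1}\left|\tfrac{1}{n}\sum_{i=1}^n g(R_\alpha^i x)-\int g\,dm\right|\le \varepsilon.
\]
Combining with the first displayed inequality and taking the supremum over $g\in\mathcal{G}$ gives $\|m-\tfrac{1}{n}\sum_{i=1}^n L^i\mu_\delta\|_W\le\varepsilon$ for every $\delta\in[0,\overline{\delta}]$, which is the desired uniform statement.

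I expect no real obstacle here: the slightly delicate point is simply to remember to extract uniformity in $g$ from unique ergodicity by a compactness argument (rather than, say, invoking the pointwise Birkhoff theorem, which would not suffice). Everything else is routine bookkeeping with the definition of the Wasserstein norm.
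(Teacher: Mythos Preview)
Your proof is correct and takes a somewhat different route from the paper. The paper works on the \emph{measure} side: it first establishes the convergence for Dirac masses $\delta_{x_0}$, observing that the quantity $\|m-\tfrac{1}{n}\sum_{i\le n} L^i\delta_{x_0}\|_W$ is in fact independent of $x_0$ (changing $x_0$ amounts to composing with a further rotation, which is an isometry for $\|\cdot\|_W$ and fixes $m$), and then approximates an arbitrary probability measure $\mu_\delta$ in $\|\cdot\|_W$ by a finite convex combination of Dirac masses, using that $L$ preserves $\|\cdot\|_W$ to propagate the approximation through the averaged iterates. You instead work on the \emph{function} side: by duality you reduce at once to a uniform Birkhoff statement for $R_\alpha$, and then use Arzel\`a--Ascoli compactness of the test-function class $\mathcal{G}$ to upgrade the per-function uniform convergence (given by unique ergodicity) to uniformity over all of $\mathcal{G}$. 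Your approach makes the uniformity in $\delta$ completely transparent---it falls out of the trivial bound $|\int f\,d\mu_\delta|\le\|f\|_\infty$ for probability measures---and avoids the approximation-by-deltas step; the paper's approach, on the other hand, isolates the reduction to Dirac masses as a reusable structure that is then recycled verbatim in the quantitative version (Lemma~\ref{conv2}), where the Denjoy--Koksma inequality gives the rate directly for point masses.
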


\medskip

\begin{proof}
Let $\delta _{x_{o}}$ be the delta-measure concentrated at a point $x_{0}\in 
\mathbb{S}^{1}$. By unique ergodicity of the system, we get $%
\lim_{n\rightarrow \infty }\|m-\frac{1}{n}\sum_{1\leq i \leq n}L^{i}\delta
_{x_{0}}\|_{W}=0.$ This is uniform in $x_{0}$; in fact, changing $x_{0}$ is
equivalent to compose by a further rotation, which is an isometry and hence
does not change the $\|\cdot\|_{W}$ norm. Any measure $\mu _{\delta }$ can
be approximated in the $\|\cdot\|_{W}$ norm, with arbitrary precision, by a
convex combination of delta-measures, \textit{i.e.}, for each $\varepsilon
>0 $ there are $x_{1},...,x_{k}\in {\mathbb{S}}^1$and $\lambda
_{1},...,\lambda _{k}\geq 0$, with $\sum_{i\leq k}\lambda _{i}=1$ \ such
that 
\begin{equation*}
\big \|\mu _{\delta }-\sum_{1\leq i\leq k}\lambda _{i}\delta_{x_{i}} \big\|_{W}\leq
\varepsilon .
\end{equation*}

Since $R_{\alpha }$ is an isometry the $\|\cdot\|_{W}$ norm is preserved by
the iterates of $L.$ Hence for each $n\geq 0,$ we also have%
\begin{equation*}
\big \|L^{n}\mu _{\delta }-L^{n}\big(\sum_{1\leq i\leq k}\lambda _{i}\delta
_{x_{i}}\big) \big\|_{W}\leq \varepsilon,
\end{equation*}
which implies
\begin{equation*}
\big \|   m- L^{n}\mu _{\delta }\big\|_{W} \leq \varepsilon+ \big\| m -L^{n}\big(\sum_{1\leq i\leq k}\lambda _{i}\delta
_{x_{i}}\big) \big\|_{W}
\end{equation*}
and 

\begin{equation*}
\big \|   m- \frac{1}{n}\sum_{1\leq i\leq n}L^{i}\mu _{\delta } \big\|_{W} \leq \varepsilon+ \big\| m -\frac{1}{n}\sum_{1\leq j\leq n}L^{j}\big(\sum_{i\leq k}\lambda
_{i}\delta _{x_{i}}\big)\big\|_{W}.
\end{equation*}

We estimate now the behavior of the right hand side of the last inequality as $n\to \infty$. For any $n$ we have 
\begin{equation*}
\big \|m-\frac{1}{n}\sum_{1\leq j\leq n}L^{j}\big(\sum_{i\leq k}\lambda
_{i}\delta _{x_{i}}\big) \big\|_{W}= \big \|\sum_{1\leq i\leq k}\lambda
_{i}m-\sum_{1\leq i\leq k} \frac{\lambda_i}{n} \big( \sum_{1\leq j\leq
n}L^{j}\delta _{x_{i}} \big) \big\|_{W}
\end{equation*}%
\ and therefore $\lim_{n\rightarrow \infty }\|\sum_{i\leq k}\lambda _{i}\big(%
m-\frac{1}{n}\sum_{j\leq n}L^{j}\delta _{x_{i}}\big)\|_{W}=0 $. From this, the claim of the lemma easily follows.
\end{proof}

\bigskip

We can now prove Theorem \ref{statstab}.\newline

\begin{proof}[Proof of Theorem \protect\ref{statstab}]
Let $L_{\delta }$ be the transfer operator associated to $T_{\delta }.$ By
Lemma \ref{prv1}, $\lim_{n\rightarrow \infty }\|m-\frac{1}{n}\sum_{1\leq i
\leq n}L^{i}\mu _{\delta }\|_{W}=0$ uniformly in $\delta$. Since 
\begin{equation*}
\sup_{x\in {\mathbb{S}}^{1}}|R_{\alpha }(x)-T_{\delta }(x)|\leq \delta,
\end{equation*}
then $\|(L-L_{\delta })\mu _{\delta }\|_{W}\leq \delta $ and 
\begin{equation}  \label{covid}
\lim_{\delta \rightarrow 0}\|(L-L_{\delta })\mu _{\delta }\|_{W}=0.
\end{equation}
By Lemma \ref{stablemma} \ we get that for each $n$ 
\begin{equation}
\big \|\mu _{\delta }-m\|_{W}\leq \|m-\frac{1}{n}\sum_{1\leq i \leq
n}L^{i}\mu _{\delta } \big \|_{W}+\frac{(n-1)}{2} \big \|(L-L_{\delta })\mu
_{\delta }\big\|_{W}.
\end{equation}%
It follows from Lemma \ref{prv1} that we can choose $n$ such that $\|m-\frac{%
1}{n}\sum_{1\leq i \leq n}L^{i}\mu _{\delta }\|_{W}$ is as small as wanted.
Then, using \eqref{covid}, we can choose $\delta $ sufficiently mall so to
make $\frac{(n-1)}{2}\|(L-L_{\delta })\mu _{\delta }\|_{W}$ as small as
needed, hence proving the statement.
\end{proof}

\begin{remark}
The qualitative stability statements with respect to the Wasserstein distance proved in this section for circle rotations, extend directly to many other systems, for example to uniquely ergodic rotations on the multidimensional torus. In fact in the proof, aside of the general properies of the Wasserten distance and of pushforward maps, we only use that the system is uniquely ergodic, and the map is an isometry. This property could also be relaxed to a non-expansive property, ensuring that \eqref{mis}  is satisfied.
\end{remark}
\subsection{Quantitative statistical stability of Diophantine rotations,
upper bounds\label{ub}}

We now consider irrational rotations, {for rotation numbers that are
``badly'' approximable by rationals: the so-called \textit{Diophantine numers%
}. In this case, we can provide a quantitative estimate for the statistical
stability of the system by showing that the modulus of continuity of the function $\delta \longmapsto \mu_\delta$ is
H\"olderian, and that its exponent depends on the Diophantine type of the
rotation number.}

Let us start by recalling the definition of \textit{Diophantine type} for a
real number (see \cite{KN}): {this concept expresses quantitatively the rate
of approximability of an irrational number by sequences of rationals}. 
\newline
In what follows, we will also use {$\| \cdot \|_{\mathbb{Z}}$} to denote the
distance from a real number to the nearest integer.

\begin{definition}
\label{linapp} If $\alpha $ is irrational, the Diophantine type of $\alpha $
is defined by%
\begin{equation*}
\gamma (\alpha ):=\sup \{\gamma\geq 0: \underset{k\rightarrow \infty }{\lim
\inf }~\,k^{\gamma }\Vert k\alpha \Vert_{\mathbb{Z}} =0\mathbb{\}}.
\end{equation*}
\end{definition}

We remark that in some cases $\gamma (\alpha )=+\infty$. When $\gamma
(\alpha )<+\infty$ we say $\alpha $ is of \textit{finite Diophantine type}.%
\newline

\begin{remark}
The Diophantine type of $\alpha $ can be also defined by%
\begin{eqnarray*}
\gamma (\alpha )&:=&\inf \left\{ \gamma\geq 0: \,\exists c>0 \; \mbox{s.t.}
\;\Vert k\alpha \Vert_{\mathbb{Z}} \geq c_{0}|k|^{-\gamma } \; \forall \,
k\in \mathbb{Z}\setminus\{0\} \right\} \\
&=& \inf \left\{ \gamma\geq 0: \,\exists c>0 \; \mbox{s.t.} \; \big|\alpha -%
\frac{p}{q}\big|\geq \frac{c}{|q|^{\gamma +1}} \quad \forall \; \frac{p}{q}%
\in \mathbb{Q}\setminus\{0\} \right\}.
\end{eqnarray*}
\end{remark}

\medskip

{In the light of this last remark on the Diophantine type of a number, we
recall the definition of \textit{Diophantine number} as it very commonly
stated in the literature.}\newline

\begin{definition}
\label{DDD}Given $c >0$ and $\tau \geq 0$, we say that a number $\alpha \in
(0,1)$ is $(c ,\tau )$-\textit{Diophantine} if 
\begin{equation}
\left\vert \alpha -\frac{p}{q}\right\vert >\frac{c }{|q|^{1+\tau }}\qquad
\forall \quad \frac{p}{q}\in \mathbb{Q}\setminus \{0\}.  \label{diophantine}
\\
\end{equation}
We denote by $\mathcal{D}(c, \tau)$ the set of of $(c,\tau)$-{Diophantine}
numbers and by $\mathcal{D}(\tau) := \cup_{c>0} \mathcal{D}(c, \tau).$
\end{definition}

\medskip

\begin{remark}
{Comparing with Definition \ref{linapp}, it follows that every $\alpha \in 
\mathcal{D}(\tau)$ has finite Diophantine type $\gamma(\alpha)\leq \tau$. On
the other hand, if $\alpha$ has finite Diophantine type, then $\alpha \in 
\mathcal{D}(\tau )$ for every $\tau >\gamma (\alpha )$.}
\end{remark}

\begin{remark}
Let us point out the following properties (see \cite[p. 601]{Russmann} for
their proofs):

\begin{itemize}
\item \textrm{if $\tau<1$, the set $\mathcal{D}(\tau)$ is empty; }

\item \textrm{if $\tau>1$ the set $\mathcal{D}(\tau)$ has full Lebesgue
measure; }

\item \textrm{if $\tau=1$, then $\mathcal{D}(\tau)$ has Lebesgue measure
equal to ero, but it has Hausdorff dimension equal to $1$ (hence, it has the
cardinality of the continuum). }
\end{itemize}

\textrm{See also \cite[Section V.6]{Herman} for more properties.\newline
}
\end{remark}

Now we introduce the notion of discrepancy of a sequence $x_{1},...,x_{N}\in
\lbrack 0,1]$. This is a measure of the equidistribution of the points $%
x_{1},...,x_{N}$. Given $x_{1},...,x_{N}\in \lbrack 0,1]$ we define the
discrepancy of the sequence by%
\begin{equation*}
D_{N}(x_{1},...,x_{N}):=\sup_{\alpha \leq \beta ,~\alpha ,\beta \in \lbrack
0,1]}\big |\frac{1}{{N}}\sum_{1\leq i\leq N}1_{[\alpha ,\beta
]}(x_{i})-(\beta -\alpha )\big|
\end{equation*}%
it can be proved (see \cite[Theorem 3.2, page 123]{KN}) that the discrepancy
of sequences obtained from orbits of and irrational rotation is related to
the Diophantine type of the rotation number.

\begin{theorem}
\label{11}Let $\alpha $ be an irrational of finite Diophantine type. Let us
denote by $D_{N,\alpha }(0)$\ the discrepancy of the sequence $%
\{x_{i}\}_{0\leq i\leq N}=\{\alpha i-\left\lfloor \alpha i\right\rfloor
\}_{0\leq i\leq N}$ \ \ (where $\left\lfloor {\cdot}\right\rfloor $ \ stands
for the integer part). \ Then: 
\begin{equation*}
D_{N,\alpha }(0)=O(N^{-\frac{1}{\gamma (\alpha )}+\varepsilon }) \qquad
\forall\; \varepsilon>0.
\end{equation*}
\end{theorem}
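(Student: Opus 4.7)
The plan is to combine the classical Erd\H{o}s--Tur\'an inequality with the Diophantine hypothesis to control the exponential sums along the orbit of the rotation, and then to sharpen the resulting estimate via an analysis of the continued-fraction expansion of $\alpha$.

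First, I would apply the Erd\H{o}s--Tur\'an inequality, which for any positive integer $H$ yields
\begin{equation*}
D_{N,\alpha}(0) \;\leq\; C\left(\frac{1}{H} + \frac{1}{N}\sum_{h=1}^H \frac{1}{h}\left|\sum_{n=1}^N e^{2\pi i h n\alpha}\right|\right).
\end{equation*}
Since the inner sum is geometric, a direct computation gives the standard bound $\bigl|\sum_{n=1}^N e^{2\pi i h n \alpha}\bigr| \leq 1/(2\|h\alpha\|_{\mathbb{Z}})$, thereby reducing the problem to a Diophantine estimate of $\|h\alpha\|_{\mathbb{Z}}$.

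Next, I would invoke the finite-Diophantine-type hypothesis: by the equivalent characterization noted after Definition \ref{linapp}, for every $\varepsilon' > 0$ there exists a constant $c = c(\alpha,\varepsilon') > 0$ such that $\|h\alpha\|_{\mathbb{Z}} \geq c\, h^{-\gamma(\alpha)-\varepsilon'}$ for all positive integers $h$. Substituting into the Erd\H{o}s--Tur\'an bound and summing the resulting geometric-type series in $h$ would yield an estimate of the form
\begin{equation*}
D_{N,\alpha}(0) \;\lesssim\; \frac{1}{H} \;+\; \frac{H^{\gamma(\alpha)+\varepsilon'}}{N}.
\end{equation*}

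The final step is to choose $H$ optimally, and this is where the main obstacle lies: a naive balancing of the two terms gives only the weaker exponent $-1/(\gamma(\alpha)+1)+\varepsilon$. To obtain the sharp exponent $-1/\gamma(\alpha)+\varepsilon$ one must exploit the arithmetic structure of the sequence $\{h\alpha\}$ beyond the pointwise Diophantine lower bound. Concretely, writing $\alpha = [a_0; a_1, a_2, \ldots]$ with convergent denominators $q_n$, the Diophantine-type hypothesis translates into the growth estimate $q_{n+1} \leq C\, q_n^{\gamma(\alpha)+\varepsilon'}$. One then partitions the range $[1,H]$ according to the intervals $[q_n, q_{n+1})$, on each of which $\|h\alpha\|_{\mathbb{Z}}$ is essentially $1/q_{n+1}$ for a typical $h$, and estimates the sum using the Ostrowski representation of $N$ in the numeration system determined by the $q_n$. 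This continued-fraction refinement, which is the technical heart of the argument and the main hurdle, is the content of Kuipers--Niederreiter Chapter 2 Theorem 3.2, whose proof I would follow to conclude.
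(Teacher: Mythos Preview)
The paper does not actually prove this theorem: it is stated as a quotation of a classical result, with the proof deferred entirely to \cite[Theorem~3.2, page~123]{KN}. Your proposal ultimately defers to the very same reference, so in that sense you are aligned with the paper. The sketch you give (Erd\H{o}s--Tur\'an plus the observation that the naive pointwise Diophantine bound on $\|h\alpha\|_{\mathbb Z}$ loses a factor and must be repaired via the continued-fraction/spacing structure of the sequence $\{h\alpha\}$) is indeed the shape of the argument in Kuipers--Niederreiter, so there is nothing to correct.
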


\bigskip

From the definition of discrepancy, Theorem \ref{11}, and the fact that the
translation is an isometry, we can deduce the following corollary.\newline

\begin{corollary}
\label{preDK}Let $x_{0}\in [0,1]$, let us denote by $D_{N,\alpha
}(x_{0})$\ the discrepancy of the sequence $\{x_{i}\}_{1\leq i\leq
N}=\{x_{0}+\alpha i-\left\lfloor x_{0}+\alpha i\right\rfloor \}_{0\leq i\leq
N}$. Then Theorem \ref{11} holds uniformly for each $x_{0}$, namely for
every $\varepsilon >0$ there exists $C=C(\varepsilon )\geq 0$ \ such that
for each \thinspace $x_{0}$ and $N\geq 1$%
\begin{equation*}
D_{N,\alpha }(x_{0})\leq CN^{-\frac{1}{\gamma (\alpha )}+\varepsilon }.
\end{equation*}
\end{corollary}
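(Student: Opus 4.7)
The plan is to reduce the discrepancy of the shifted sequence $\{x_0 + \alpha i\}$ to that of the unshifted sequence $\{\alpha i\}$, exploiting the fact that the map $y \mapsto y + x_0 \pmod 1$ is an isometry of $\mathbb{S}^1$ that sends arcs to arcs. The only mild technicality is that ``arc on $\mathbb{S}^1$'' and ``interval in $[0,1]$'' are not quite the same notion, and this mismatch will cost a harmless factor of $2$.

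First I would rewrite the counting function entering the definition of $D_{N,\alpha}(x_0)$: for any $0 \leq a \leq b \leq 1$,
\[
1_{[a,b]}\!\left((x_0 + \alpha i) \bmod 1\right) \;=\; 1_{S_{a,b,x_0}}\!\left(\alpha i \bmod 1\right),
\]
where $S_{a,b,x_0}$ is the arc $[a - x_0, b - x_0]$ on $\mathbb{S}^1$, read as a subset of $[0,1)$. There are two cases: either $S_{a,b,x_0}$ is a single interval $[c, c + (b-a)] \subset [0,1]$, or it wraps around and equals a disjoint union $[c,1) \cup [0,d)$ with $(1-c) + d = b - a$.

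Next I would apply Theorem \ref{11} in each case. In the non-wrapping case, the interval $[c, c + (b-a)]$ is admissible in the definition of $D_{N,\alpha}(0)$, and the discrepancy for this interval is bounded by $D_{N,\alpha}(0)$. In the wrapping case, I split the sum into contributions over $[c,1)$ and $[0,d)$, estimate each by $D_{N,\alpha}(0)$, and use the triangle inequality together with $(1-c) + d = b - a$ to conclude
\[
\left| \frac{1}{N} \sum_{i=1}^{N} 1_{S_{a,b,x_0}}(\alpha i) - (b-a) \right| \;\leq\; 2\,D_{N,\alpha}(0).
\]
Taking the supremum over $a \leq b$ yields $D_{N,\alpha}(x_0) \leq 2\,D_{N,\alpha}(0)$, uniformly in $x_0 \in [0,1]$.

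Finally, I would combine this uniform bound with Theorem \ref{11}, which gives $D_{N,\alpha}(0) \leq \tfrac{C}{2}\, N^{-1/\gamma(\alpha) + \varepsilon}$ for some constant $C = C(\varepsilon) \geq 0$ (the factor $1/2$ is absorbed into redefining $C$). This yields the claimed uniform estimate $D_{N,\alpha}(x_0) \leq C N^{-1/\gamma(\alpha) + \varepsilon}$. There is no real obstacle here; the only thing to be careful about is the bookkeeping between open/closed endpoints and the indexing (the corollary uses $1 \leq i \leq N$ while the theorem uses $0 \leq i \leq N$), which differ only by a $O(1/N)$ term that is absorbed into the $\varepsilon$ in the exponent.
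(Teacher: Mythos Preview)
Your proposal is correct and follows essentially the same approach as the paper: both reduce $D_{N,\alpha}(x_0)$ to $D_{N,\alpha}(0)$ by translating the test interval by $-x_0$ modulo $1$, observing that the translate is a union of at most two subintervals of $[0,1]$, and applying the triangle inequality to obtain $D_{N,\alpha}(x_0)\leq 2\,D_{N,\alpha}(0)$ before invoking Theorem~\ref{11}. Your attention to the endpoint and indexing discrepancies is a nice extra bit of care that the paper leaves implicit.
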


\begin{proof}
It is sufficient to prove  that for each $x_0$ it holds that $D_{N,\alpha}(x_{0})\leq 2D_{N,\alpha}({0})$. Indeed, consider $\varepsilon>0 $ and an interval $I=[\alpha,\beta]$ such that
\begin{equation*}
D_{N}(x_{1},...,x_{N})-\varepsilon\leq \left| \frac{1}{{N}}\sum_{1\leq i\leq N}1_{I}(x_{i})-(\beta -\alpha )\right|.
\end{equation*}
Now consider the translation of $I$ by $-x_0$ (mod. $1$): $$S=\{x\in[0,1] \ | \  x+x_0-\lfloor x+x_0 \rfloor\in I\}$$ and the translation of the sequence $x_i$, which is the sequence $y_i=\alpha i-\left\lfloor \alpha i\right\rfloor
$. We have that $S $ is composed  by at most two intervals  $S=I_1\cup I_2$ with lenghts $m(I_1)$ and $m(I_2)$; moreover 
\begin{equation*}
\left |\frac{1}{{N}}\sum_{1\leq i\leq N}1_{I}(x_{i})-(\beta -\alpha )\right|= \left |\frac{1}{{N}}\sum_{1\leq i\leq N}1_{I_1}(y_{i})- m(I_1)+   \frac{1}{{N}}\sum_{1\leq i\leq N}1_{I_2}(y_{i})- m(I_2)     \right|.
\end{equation*}

Then

\begin{equation*}
D_{N}(x_{1},...,x_{N})-\varepsilon\leq 2 D_{N}(y_{1},...,y_{N}).
\end{equation*} 
Since $\varepsilon$ is arbitrary, we conclude that $D_{N,\alpha}(x_{0})\leq 2D_{N,\alpha}({0})$.
\end{proof}
\medskip

The discrepancy is also related to the speed of convergence of Birkhoff sums
of irrational rotations. The following is known as the Denjoy-Kocsma
inequality (see \cite[Theorem 5.1, page 143 and Theorem 1.3, page 91]{KN}).%
\newline

\begin{theorem}
\label{DK}Let $f$ be a function of bounded variation, that we denote by $%
V(f) $. Let $x_{1},...,x_{N}\in \lbrack 0,1]$ be a sequence with discrepancy 
$D_{N}(x_{1},...,x_{N})$. Then%
\begin{equation*}
\left|\frac{1}{N}\sum_{1\leq i\leq N}f(x_{i})-\int_{[0,1]}f~dx \right|\leq
V(f)\,D_{N}(x_{1},...,x_{N}).
\end{equation*}
\end{theorem}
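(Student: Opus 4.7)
The plan is to recognize this as (essentially) Koksma's classical inequality and prove it by rewriting both sums as Riemann--Stieltjes integrals against distribution functions, then transferring the estimate onto the supremum distance between these distributions via integration by parts. This last sup is controlled by the discrepancy almost by definition.

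More concretely, first I would introduce the empirical distribution function
\[
F_N(t) := \frac{1}{N}\#\{1 \leq i \leq N : x_i \leq t\}
\quad\text{and}\quad F(t) := t, \qquad t \in [0,1],
\]
so that
\[
\frac{1}{N}\sum_{i=1}^N f(x_i) - \int_{[0,1]} f\,dx \;=\; \int_0^1 f(t)\,d(F_N - F)(t).
\]
Applying the definition of discrepancy to intervals of the form $[0,t]$ immediately gives the uniform bound $|F_N(t) - F(t)| \leq D_N(x_1,\ldots,x_N)$ for every $t \in [0,1]$.

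Next I would apply Riemann--Stieltjes integration by parts to write
\[
\int_0^1 f(t)\,d(F_N - F)(t) \;=\; \bigl[f \cdot (F_N - F)\bigr]_0^1 \;-\; \int_0^1 (F_N(t) - F(t))\,df(t).
\]
The boundary term vanishes because $F_N(1) = F(1) = 1$ and $F_N(0^-) = F(0) = 0$. The remaining integral is then estimated by
\[
\left|\int_0^1 (F_N - F)\,df\right| \;\leq\; \sup_{t\in[0,1]} |F_N(t) - F(t)| \cdot V(f) \;\leq\; D_N(x_1,\ldots,x_N)\,V(f),
\]
using the definition of total variation to bound $|df|$.

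The main technical nuisance, rather than a deep obstacle, is the treatment of common jump discontinuities of $f$ and $F_N$ in the Stieltjes integration by parts: one must fix a convention (say right-continuous $F_N$) and check that the boundary contributions really do cancel, or alternatively reduce to the case of monotone $f$ by the Jordan decomposition $f = f_1 - f_2$ with $V(f) = V(f_1) + V(f_2)$ and handle each monotone piece by an Abel-summation argument. Once that bookkeeping is done, the inequality follows.
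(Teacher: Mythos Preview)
Your proof is correct and is precisely the classical argument for Koksma's inequality. Note, however, that the paper does not supply its own proof of this statement: Theorem~\ref{DK} is simply quoted as a known result, with a citation to Kuipers--Niederreiter \cite[Theorem 5.1, p.~143 and Theorem 1.3, p.~91]{KN}. The proof you outline---rewriting the difference as a Riemann--Stieltjes integral against $F_N - F$, integrating by parts, and bounding by $\sup_t |F_N(t)-F(t)| \cdot V(f)$---is exactly the standard one found in that reference, so there is nothing to compare.
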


\medskip

We can now prove a quantitative version of our stability result.\newline

\begin{theorem}[Quantitative statistical stability of Diophantine rotations]

\label{stst2} Let $R_{\alpha }:{{\mathbb{S}}^{1}\rightarrow {\mathbb{S}}^{1}}
$ be an irrational rotation. Suppose $\alpha $ has finite Diophantine type $%
\gamma (\alpha ).$ Let $\{T_{\delta }\}_{0\leq \delta \leq \overline{\delta }%
}$ be a family of Borel measurable maps of the circle such that%
\begin{equation*}
\sup_{x\in {\mathbb{S}}^{1}}|R_{\alpha }(x)-T_{\delta }(x)|\leq \delta .
\end{equation*}%
Suppose $\mu _{\delta }$ is an invariant measure of $T_{\delta }$. Then, for
each $\ell <{\frac{1}{\gamma (\alpha )+1}}$ we have: 
\begin{equation*}
\|m-\mu _{\delta }\|_{W}=O(\delta ^{\ell }).
\end{equation*}
\end{theorem}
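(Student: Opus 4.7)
\textbf{Proof proposal for Theorem \ref{stst2}.} The plan is to apply Lemma \ref{stablemma} and use the Diophantine hypothesis on $\alpha$ to bound quantitatively the Ces\`aro-average term, then optimize the free parameter $n$ in terms of $\delta$. First, since $\sup_x |R_\alpha(x) - T_\delta(x)| \leq \delta$ and the $W$-norm is dominated by the $1$-Wasserstein transport cost between $L\mu_\delta$ and $L_\delta \mu_\delta$ (concretely, by test-function duality, $|\int g\, dL\mu_\delta - \int g\, dL_\delta\mu_\delta| = |\int (g\circ R_\alpha - g\circ T_\delta)\, d\mu_\delta| \leq \mathrm{Lip}(g)\,\delta$), one has $\|(L-L_\delta)\mu_\delta\|_W \leq \delta$. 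Lemma \ref{stablemma} then gives, for every $n \geq 1$,
\begin{equation*}
\|\mu_\delta - m\|_W \;\leq\; \Big\| m - \frac{1}{n}\sum_{i=1}^n L^i \mu_\delta \Big\|_W + \frac{n-1}{2}\,\delta.
\end{equation*}

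Next I would estimate the first term via the Denjoy--Koksma inequality. Let $g:\mathbb{S}^1 \to \mathbb{R}$ be $1$-Lipschitz with $\|g\|_\infty \leq 1$; since the circle has total length $1$, its total variation satisfies $V(g) \leq 1$. By duality and the fact that $L$ is push-forward by $R_\alpha$,
\begin{equation*}
\int g\, d\Big(\frac{1}{n}\sum_{i=1}^n L^i \mu_\delta\Big) \;=\; \int \Big(\frac{1}{n}\sum_{i=1}^n g(x + i\alpha)\Big)\, d\mu_\delta(x).
\end{equation*}
Applying Theorem \ref{DK} pointwise in $x$, together with the uniform discrepancy bound of Corollary \ref{preDK}, we get, for every $\varepsilon > 0$,
\begin{equation*}
\Big|\frac{1}{n}\sum_{i=1}^n g(x+i\alpha) - \int g\, dm\Big| \;\leq\; V(g)\, D_{n,\alpha}(x) \;\leq\; C_\varepsilon\, n^{-\frac{1}{\gamma(\alpha)} + \varepsilon},
\end{equation*}
uniformly in $x$. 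Integrating against the probability measure $\mu_\delta$ and taking the supremum over admissible $g$ yields
\begin{equation*}
\Big\| m - \frac{1}{n}\sum_{i=1}^n L^i \mu_\delta \Big\|_W \;\leq\; C_\varepsilon\, n^{-\frac{1}{\gamma(\alpha)} + \varepsilon}.
\end{equation*}

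Combining the two bounds gives $\|\mu_\delta - m\|_W \leq C_\varepsilon\, n^{-\frac{1}{\gamma(\alpha)} + \varepsilon} + \frac{n-1}{2}\,\delta$. The final step is to balance the two terms by choosing $n \sim \delta^{-\gamma(\alpha)/(\gamma(\alpha)+1-\varepsilon\gamma(\alpha))}$, which produces a bound of order $\delta^{(1-\varepsilon\gamma(\alpha))/(\gamma(\alpha)+1-\varepsilon\gamma(\alpha))}$. Since $\varepsilon > 0$ is arbitrary, the exponent can be made arbitrarily close to $1/(\gamma(\alpha)+1)$, so $\|m - \mu_\delta\|_W = O(\delta^\ell)$ for any $\ell < 1/(\gamma(\alpha)+1)$. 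The only non-routine point is the uniformity in $x$ of the discrepancy estimate, but this is precisely the content of Corollary \ref{preDK}; the rest is bookkeeping of the $\varepsilon$'s in the optimization.
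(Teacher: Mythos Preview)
Your proof is correct and follows essentially the same route as the paper: apply Lemma~\ref{stablemma}, bound $\|(L-L_\delta)\mu_\delta\|_W\le\delta$, control the Ces\`aro term via the Denjoy--Koksma inequality together with the uniform discrepancy bound of Corollary~\ref{preDK}, and then optimize in $n$. The only minor difference is that you obtain the Ces\`aro estimate directly by duality (writing $\int g\,d(\frac{1}{n}\sum L^i\mu_\delta)=\int \frac{1}{n}\sum g(x+i\alpha)\,d\mu_\delta(x)$ and applying Denjoy--Koksma pointwise), whereas the paper packages this step as a separate Lemma~\ref{conv2} proved by approximating $\mu_\delta$ with convex combinations of Dirac masses; your shortcut is slightly cleaner but the substance is identical.
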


\bigskip
Let us first prove some preliminary result.\newline

\begin{lemma}
\label{conv2}Under the assumptions of Theorem \ref{stst2}, let $\{\mu
_{\delta }\}_{0\leq \delta \leq \overline{\delta }}$ be a family of Borel {%
probability} measures on $\mathbb{S}^{1}$. Then, {for every $\varepsilon >0$}
\begin{equation}
\|m-\frac{1}{n}\sum_{1\leq i\leq n}L^{i}\mu _{\delta }\|_{W}=O(n^{-\frac{1}{%
\gamma (\alpha )}+\varepsilon })  \label{ww}
\end{equation}%
uniformly in $\delta$; namely, {for every $\varepsilon >0$}, there exist $C={%
C(\varepsilon )}\geq 0$ such that for each $\delta $ and $n\geq 1$ 
\begin{equation*}
\|m-\frac{1}{n}\sum_{1\leq i\leq n}L^{i}\mu _{\delta }\|_{W}\leq Cn^{-\frac{1%
}{\gamma (\alpha )}+\varepsilon }.
\end{equation*}
\end{lemma}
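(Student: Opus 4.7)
My plan is to reduce the bound to the case of Dirac measures, where the Cesàro average against a test function becomes a Birkhoff average for the rotation, and then invoke the Denjoy-Koksma inequality together with the discrepancy estimate from Corollary \ref{preDK} to control it quantitatively. The Diophantine exponent will enter precisely through this discrepancy bound, and the uniformity in $\delta$ will come for free because the estimate for Dirac masses is already uniform in the starting point.

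Concretely, I would first fix $\varepsilon>0$ and a test function $g:\mathbb{S}^1\to\mathbb{R}$ with $\mathrm{Lip}(g)\le 1$ and $\|g\|_\infty\le 1$; such a $g$ has total variation $V(g)\le 1$ on $[0,1]$, since $V(g)=\int_0^1|g'|\le \mathrm{Lip}(g)$. For $x_0\in\mathbb{S}^1$, the measure $\frac{1}{n}\sum_{i=1}^n L^i\delta_{x_0}$ is the empirical measure of the orbit $\{x_0+i\alpha\}_{i=1}^n$, so applying Theorem \ref{DK} and Corollary \ref{preDK} gives
\begin{equation*}
\left|\int g\,dm-\frac{1}{n}\sum_{i=1}^n g(x_0+i\alpha)\right|\le V(g)\,D_{n,\alpha}(x_0)\le C\,n^{-\frac{1}{\gamma(\alpha)}+\varepsilon},
\end{equation*}
with $C=C(\varepsilon)$ independent of $x_0$. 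Taking the supremum over admissible $g$ and recalling Definition \ref{w} yields
\begin{equation*}
\left\|m-\frac{1}{n}\sum_{i=1}^n L^i\delta_{x_0}\right\|_W\le C\,n^{-\frac{1}{\gamma(\alpha)}+\varepsilon}\qquad\text{uniformly in }x_0.
\end{equation*}

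To pass from Dirac masses to the arbitrary probability measure $\mu_\delta$, I would use the disintegration $\mu_\delta=\int \delta_{x_0}\,d\mu_\delta(x_0)$, together with the linearity of $L$ and the fact that $\mu_\delta(\mathbb{S}^1)=1$, to write
\begin{equation*}
m-\frac{1}{n}\sum_{i=1}^n L^i\mu_\delta=\int\left(m-\frac{1}{n}\sum_{i=1}^n L^i\delta_{x_0}\right)d\mu_\delta(x_0).
\end{equation*}
The Wasserstein norm, being the dual of a family of test functions, is convex, so by Jensen's inequality (or directly by swapping the supremum and the integral) the left-hand side is bounded by the integral of the Dirac-mass estimates, which is itself bounded by $C\,n^{-1/\gamma(\alpha)+\varepsilon}$. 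Since this bound depends only on $\alpha$, $\varepsilon$, and the fact that $\mu_\delta$ is a probability measure, it is uniform in $\delta$, which is exactly the claim. (Alternatively, one can mimic Lemma \ref{prv1} and approximate $\mu_\delta$ in the $\|\cdot\|_W$-norm by convex combinations of Dirac masses, but the disintegration route is cleaner.)

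I do not anticipate a genuine obstacle: the main subtlety is verifying $V(g)\le 1$ for $1$-Lipschitz functions on $\mathbb{S}^1$ when applying Theorem \ref{DK}, and making sure the convexity/Jensen step for $\|\cdot\|_W$ is correctly invoked, neither of which is hard. The substantive content is entirely contained in the discrepancy estimate of Corollary \ref{preDK}; the rest is a duality-and-averaging argument.
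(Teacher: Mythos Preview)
Your proposal is correct and follows essentially the same route as the paper: both reduce to the Dirac-mass case via the Denjoy--Koksma inequality combined with the discrepancy bound of Corollary~\ref{preDK}, and then extend to arbitrary probability measures by convexity. The only cosmetic difference is that the paper passes to general $\mu_\delta$ by approximating with finite convex combinations of Dirac masses (as in Lemma~\ref{prv1}), whereas you use the disintegration $\mu_\delta=\int\delta_{x_0}\,d\mu_\delta(x_0)$ directly; you even note this alternative yourself.
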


\bigskip

\begin{proof}
Let us fix $\varepsilon >0.$ By Theorem \ref{DK} and Corollary \ref{preDK}
we have that there is $C\geq 0$ such that for each Lipschitz function $f$
with Lipschitz constant $1$, and for each $x_{0}\in {\mathbb{S}}^{1}$ we have%
\begin{equation*}
\left|\frac{1}{n}\sum_{1\leq i\leq n}f(R_{\alpha
}^{i}(x_{0}))-\int_{[0,1]}f~dx\right |\leq C\, n^{-\frac{1}{\gamma (\alpha )}%
+\varepsilon } \qquad \forall\; n\geq 1.
\end{equation*}

Let $\delta _{x_{0}}$ be the delta-measure concentrated at a point $x_{0}\in 
\mathbb{S}^{1}$. By definition of $\|\cdot\|_{W}$, we conclude that 
\begin{equation}
\|m-\frac{1}{n}\sum_{1\leq i\leq n}L^{i}\delta _{x_{0}}\|_{W}\leq Cn^{-\frac{%
1}{\gamma (\alpha )}+\varepsilon }.  \label{www}
\end{equation}%
\newline
Now, as in the proof of Lemma \ref{stablemma}, any measure $\mu _{\delta }$
can be approximated, arbitary well, in the $\|\cdot\|_{W}$ norm by a convex
combination of delta-measures and we obtain $($\ref{ww}$)$ from $(\ref{www})$%
, exactly in the same way as done in the proof of Lemma \ref{stablemma}.
\end{proof}

\bigskip

\begin{proof}[Proof of Theorem \protect\ref{stst2}]
Let $L_{\delta }$ be the transfer operator of $T_{\delta }.$ Let us fix $%
\varepsilon >0$; without loss of generality we can suppose $\varepsilon <%
\frac{1}{\gamma (\alpha )}.$ By lemma \ref{conv2} we have that 
\begin{equation*}
\|m-\frac{1}{n}\sum_{1\leq i \leq n}L^{i}\mu _{\delta }\|_{W}\leq Cn^{-\frac{%
1}{\gamma (\alpha )}+\varepsilon }.
\end{equation*}
By Lemma \ref{stablemma} \ we get that for each $n\geq 1$ 
\begin{equation}
\|\mu _{\delta }-m\|_{W}\leq \big \|m-\frac{1}{n}\sum_{1\leq i \leq
n}L^{i}\mu _{\delta }\big\|_{W}+\frac{(n-1)}{2}\big \|(L-L_{\delta })\mu
_{\delta }\big\|_{W}.
\end{equation}

Hence%
\begin{eqnarray}
\|\mu _{\delta }-m\|_{W} &\leq &Cn^{-\frac{1}{\gamma (\alpha )}+\varepsilon
}+\frac{(n-1)}{2}\|(L-L_{\delta })\mu _{\delta }\|_{W}  \label{stimaboh} \\
&\leq &{\ Cn^{-\frac{1}{\gamma (\alpha )}+\varepsilon }+\frac{(n-1)}{2}%
\delta },  \notag
\end{eqnarray}%
{\ where we have used that, since }$\sup_{x\in {\mathbb{S}}^{1}}|R_{\alpha
}(x)-T_{\delta }(x)|\leq \delta $, then 
\begin{equation*}
\|(L-L_{\delta })\mu _{\delta }\|_{W}\leq \delta.
\end{equation*}

Since the inequality is true for each $n\geq1$, we can now consider $n$
minimizing 
\begin{equation*}
{F(n):=Cn^{-\frac{1}{\gamma (\alpha )}+\varepsilon }+\frac{n-1}{2}\delta .}
\end{equation*}%
The extension to $\mathbb{R}$ of the funcion $F$ is convex {and it goes to $%
+\infty $ both as $x\rightarrow 0^{+}$ and as $x\rightarrow +\infty $.} Let
us denote \ $a:=\frac{1}{\gamma (\alpha )}-\varepsilon {>0}$, then $%
F(x)=Cx^{-a}+\frac{x-1}{2}\delta .$ This is minimized at %  
%\begin{equation*}
%\frac{d(Cx^{-a}+\frac{x-1}{2}b)}{dx}=\frac{1}{2}b-C\frac{a}{x^{a+1}}=0
%\end{equation*}%
{\ 
\begin{equation*}
x_{\ast }:=(2aC)^{\frac{1}{a+1}}\delta ^{-{\frac{1}{a+1}}}:=\tilde{c}%
\;\delta ^{-\frac{1}{a+1}}.
\end{equation*}%
} {\ Consider $n_{\ast }=\left\lfloor x_{\ast }\right\rfloor $ and observe
that%
\begin{eqnarray*}
F(n_{\ast }) &=&\frac{C}{n_{\ast }^{a}}+\frac{n_{\ast }-1}{2}\delta \leq 
\frac{C}{n_{\ast }^{a}}+\frac{n_{\ast }}{2}\delta =O(\delta ^{\frac{a}{a+1}})
\\
F(n_{\ast }+1) &=&\frac{C}{(n_{\ast }+1)^{a}}+\frac{n_{\ast }}{2}\delta \leq 
\frac{C}{n_{\ast }^{a}}+\frac{n_{\ast }}{2}\delta =O(\delta ^{\frac{a}{a+1}%
}).
\end{eqnarray*}%
}

Substituting in \eqref{stimaboh} we conclude:%
\begin{eqnarray*}
\|\mu _{\delta }-m\|_{W} &\leq &\min \{F(n_{\ast }),F(n_{\ast
}+1)\}=O(\delta ^{\frac{a}{a+1}}) \\
&=&O\big(\delta ^{\frac{1-\varepsilon \gamma (\alpha )}{1+(1-\varepsilon
)\gamma (\alpha )}}\big)
\end{eqnarray*}%
proving the statement.
\end{proof}

\begin{remark}
{We remark that, as it follows from the above proof, the constants involved in  $O(\delta ^{\ell })$ in the statement of Theorem \ref{stst2} only depend on $\alpha $ and $\ell$.}
\end{remark}

\subsection{Quantitative statistical stability of Diophantine rotations,
lower bounds\label{lob}}

{In this subsection we discuss that the upper bound on the statistical
stability obtained in Theorem \ref{stst2} is essentially optimal.} We show
that for a rotation $R_{\alpha }$ with rotation number $\alpha$ of
Diophantine type $1< \gamma(\alpha) \leq +\infty$, there exist perturbations
of ``size $\delta$'', for which the unique physical invariant measure {varies} in a H%
\"{o}lder way.\newline
More specifically, for any $r\geq 0$ we will construct a sequence $\delta
_{n}\rightarrow 0$ \ and $C^\infty$-maps $T_{n}$ such that: $\|R_{\alpha
}-T_{n}\|_{C^{r}}\leq \delta _{n}$, $T_{n}$ has {a} unique physical invariant {%
probability} measure $\mu _{n}$ and $\|\mu _{n}-m \|_{W}\geq C\delta_n ^{\frac{%
1}{p}}$ for some $C\geq 0$ and $p>1$.\newline

\begin{proposition}
\label{berlusconi} Let us consider the rotation $R_{\alpha }:\mathbb{S}%
^{1}\rightarrow \mathbb{S}^{1}$, where $\alpha$ is an irrational number with 
{$1< \gamma(\alpha) \leq +\infty$}. For each {$r\geq 0$ } and $\gamma
^{\prime }<\mathcal{\gamma }(\alpha )$ there exist a sequence of numbers $%
\delta _{j}> 0 $ and $C^\infty$ diffeomorphisms $T_{j}:\mathbb{S}%
^{1}\rightarrow \mathbb{S}^{1}$ such that $\|T_{j}-R_{\alpha }\|_{C^{r}}\leq
2\delta _{j}$ \ and 
\begin{equation*}
\|m-\mu _{j}\|_{W}\geq \frac{1}{2}{\delta _{j}^{\frac{1}{\gamma ^{\prime
}+1}}}
\end{equation*}%
for every $j\in \mathbb{N}$ and for every $\mu _{j}$ invariant measure of $%
T_{j}$.
\end{proposition}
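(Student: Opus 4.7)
The strategy is to take $T_j$ to be a high-frequency perturbation of a Diophantine-close rational rotation $R_{p_j/q_j}$, designed so that $T_j$ has a Morse--Smale structure with exactly two periodic orbits of period $q_j$, whose points lie, up to a vanishing error, on the common grid $\{k/(2q_j)\}_{k=0}^{2q_j-1}$. A single Lipschitz test function of period $1/(2q_j)$ will then detect \emph{every} convex combination of the two ergodic invariant measures simultaneously, producing the required lower bound uniformly in the choice of $\mu_j$.

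First I would fix $\gamma''$ with $\max(\gamma',1)<\gamma''<\gamma(\alpha)$ (possible since $\gamma(\alpha)>1$) and use the definition of Diophantine type to extract an infinite sequence of rationals $p_j/q_j$ in lowest terms with
\[
\delta_j:=\bigl|\alpha-p_j/q_j\bigr|\leq q_j^{-(\gamma''+1)}.
\]
Set $T_j(x):=R_{p_j/q_j}(x)+\sigma_j\sin(2\pi q_j x)$ with $\sigma_j:=c_r\,\delta_j\,q_j^{-r}$, where $c_r>0$ is chosen so that $\|\sigma_j\sin(2\pi q_j\cdot)\|_{C^r}\leq\delta_j$. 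Since $\|R_{p_j/q_j}-R_\alpha\|_{C^r}=\delta_j$, the triangle inequality yields $\|T_j-R_\alpha\|_{C^r}\leq 2\delta_j$. As $\sigma_j q_j=O(q_j^{-(\gamma''+r)})\to 0$, the derivative $T_j'=1+2\pi q_j\sigma_j\cos(2\pi q_j\cdot)$ stays positive and $T_j$ is a $C^\infty$ diffeomorphism of $\mathbb{S}^1$ for all $j$ sufficiently large.

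The key computation is the $q_j$-th iterate on the lift: because $\sin(2\pi q_j\cdot)$ is invariant under translation by $p_j/q_j$ (after multiplication by $q_j$ one gets an integer shift), a straightforward induction gives
\[
\tilde T_j^{q_j}(x)-x-p_j\;=\;\sigma_j q_j\sin(2\pi q_j x)+O(\sigma_j^{2}q_j^{3})
\]
uniformly in $x$. The leading term has $2q_j$ simple zeros at $k/(2q_j)$ with derivative of size $2\pi\sigma_j q_j^2$, so by the implicit function theorem the fixed points of $T_j^{q_j}$ are exactly $2q_j$ points $x_k=k/(2q_j)+\eta_k$ with $|\eta_k|=O(\sigma_j q_j)$, hence $q_j|\eta_k|=O(\sigma_j q_j^2)=O(q_j^{1-\gamma''-r})\to 0$; this is precisely the step where the hypothesis $\gamma(\alpha)>1$ is used. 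The sign alternation of $\sin(2\pi q_j\cdot)$ between consecutive zeros splits these into an attracting orbit $\{x_k^+\}$ and a repelling orbit $\{x_k^-\}$ of cardinality $q_j$; the standard dynamics of Morse--Smale circle diffeomorphisms then force every invariant probability measure of $T_j$ to be of the form $\mu_j=\lambda\mu_++(1-\lambda)\mu_-$ with $\mu_\pm:=q_j^{-1}\sum_k\delta_{x_k^\pm}$ and $\lambda\in[0,1]$.

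To conclude, I would take $g(x):=\cos(4\pi q_j x)/(4\pi q_j)$, which is $1$-Lipschitz, satisfies $\|g\|_\infty=(4\pi q_j)^{-1}\leq 1$, and has $\int g\,dm=0$. Since both orbits lie in $\{k/(2q_j)+\eta_k\}$ and $q_j|\eta_k|\to 0$, for $j$ large one has $g(x_k^\pm)=\cos(4\pi q_j\eta_k)/(4\pi q_j)\geq 1/(8\pi q_j)$, so $\int g\,d\mu_j\geq 1/(8\pi q_j)$ for \emph{every} $\mu_j$, giving $\|m-\mu_j\|_W\geq 1/(8\pi q_j)$. Finally, $\delta_j^{1/(\gamma'+1)}\leq q_j^{-(\gamma''+1)/(\gamma'+1)}\leq 1/(4\pi q_j)$ for $j$ large since $\gamma''>\gamma'$, yielding $\|m-\mu_j\|_W\geq\tfrac12\delta_j^{1/(\gamma'+1)}$ as required. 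The main technical obstacle is the iteration estimate for $\tilde T_j^{q_j}$ and the implicit function argument controlling $|\eta_k|$; both are elementary once the correct scale $\sigma_j\sim\delta_j q_j^{-r}$ is chosen, and the choice $\gamma''>\max(\gamma',1)$ makes all error terms lose to the leading ones.
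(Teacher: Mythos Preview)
Your argument is correct and shares the paper's core strategy: pass to a good rational approximant $p_j/q_j$ with $|\alpha-p_j/q_j|=\delta_j\le q_j^{-(\gamma''+1)}$ for some $\gamma''\in(\gamma',\gamma(\alpha))$, and then perturb $R_{p_j/q_j}$ into a Morse--Smale circle diffeomorphism whose only invariant measures are carried by two period-$q_j$ orbits sitting on (or near) the grid $\{k/(2q_j)\}$, so that every invariant measure is at Wasserstein distance $\gtrsim 1/q_j\ge \tfrac12\delta_j^{1/(\gamma'+1)}$ from $m$.

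The execution, however, differs in a way worth recording. The paper chooses a \emph{qualitative} bump-type perturbation that vanishes exactly on $\{k/(2q_j)\}$, so the two periodic orbits sit precisely on this grid and the Wasserstein lower bound is immediate; no iteration estimate or perturbation of zeros is needed, and the size of the extra perturbation can be taken arbitrarily small independently of $\delta_j$. Your explicit choice $\sigma_j\sin(2\pi q_j x)$ forces you through the $q_j$-fold iteration estimate and an implicit-function step to control the drifts $\eta_k$ (this is where you need $\gamma''>1$, hence the hypothesis $\gamma(\alpha)>1$), but it pays back in two ways: the construction is completely explicit, and your single test function $g(x)=\cos(4\pi q_j x)/(4\pi q_j)$ is constant on the whole grid and therefore certifies the lower bound for \emph{every} convex combination $\lambda\mu_++(1-\lambda)\mu_-$ at once. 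The paper verifies the bound only for the attractor measure and leaves the ``for every $\mu_j$'' clause to the reader; your test function in fact closes that gap for the paper's construction as well.
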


\begin{proof}
We remark that the unique invariant measure for $R_{\alpha }$ is the Lebesgue
measure $m.$ Let us choose $\gamma ^{\prime }<\gamma (\alpha )$; {it follows
from the definition of $\gamma (\alpha )$} that there are infinitely many
integers ${k_{j}\in \mathbb{N}}$ and ${p_{j}\in \mathbb{Z}}$ such that 
\begin{equation*}
|k_{j}\alpha -p_{j}|\leq \frac{1}{k_{j}^{\gamma ^{\prime }}}\qquad
\Longleftrightarrow \qquad \big|\alpha -\frac{p_{j}}{k_{j}}\big|\leq \frac{1%
}{k_{j}^{\gamma ^{\prime }+1}}.
\end{equation*}

Let us set $\delta _{j}:=-\alpha +\frac{p_{j}}{k_{j}}$. Clearly, $|\delta
_{j}|\leq \frac{1}{k_{j}^{\gamma ^{\prime }+1}}\longrightarrow 0$ as $%
j\rightarrow \infty $.

Consider $\hat{T}_{j}$ defined as $\hat{T}_{j}(x)=R_{\alpha +\delta _{j}}(x)$%
; for each $r\geq 0$ we have that \ $\|\hat{T}_{j}-R_{\alpha
}\|_{C^{r}}=|\delta _{j}|$. Since $(\delta _{j}+\alpha )=\frac{p_{j}}{k_{j}}
$ is rational, {every orbit is $k_{j}$-periodic. Let us consider the orbit
starting at $0$ and denote it by} 
\begin{equation*}
y_{0}:=0,\;y_{1}:=\delta _{j},\;\ldots ,\;y_{k_{j}-1}:=1-\delta
_{j},\;y_{k_{j}}:=0\;(\mathrm{mod.} \,{\mathbb{Z}}).
\end{equation*}%
Consider the measures 
\begin{equation*}
\mu _{j}=\frac{1}{k_{j}}\sum_{0\leq i<k_{j}}\delta _{y_{i}},
\end{equation*}%
where $\delta _{y_{i}}$ is the delta-measure concentrated at $y_{i}$. %.
The measure $\mu _{j}$ is clearly invariant for the map $\hat{T}_{j}$ and
it can be directly computed that%
\begin{equation*}
\|m-\mu _{j}\|_{W}\geq \frac{1}{2k_{j}}.
\end{equation*}%

{Observe that $|\delta _{j}|\leq \frac{1}{k_{j}^{\gamma ^{\prime }+1}}$,
hence} we get $|\delta _{j}|^{\frac{1}{\gamma ^{\prime }+1}}\leq \frac{1}{%
k_{j}}$; then%
\begin{equation*}
\|m-\mu _{j}\|_{W}\geq \frac{1}{2}{|\delta _{j}|^{\frac{1}{\gamma
^{\prime }+1}}}.
\end{equation*}

This example can be further improved by perturbing the map $\hat{T}%
_{j}=R_{\alpha +\delta _{j}}$ to a new map $T_{j}$ in a way that the measure 
$\mu _{j}$ (supported on the attractor of $T_{j}$) and the measure \footnote{%
The \textit{translated measure} is defined as follows: $[\mu _{j}+\frac{1}{%
2k_{j}}](A):=\mu _{j}(A-\frac{1}{2k_{j}})$ \ for each measurable set $A$ in $%
\mathbb{S}^{1}$, where $A-\frac{1}{2k_{j}}$ is the translation of the set $A$
by $-\frac{1}{2k_{j}}$.} $\mu _{j}+\frac{k_{j}}{2}$ (supported on the
repeller of $T_{j}$) are the only invariant measures of $T_{j}$, and $\mu
_{j}$ is the unique physical measure for the system. This can be done by
making a $C^{\infty }$ perturbation on $\hat{T}_{j}=R_{\alpha +\delta _{j}}$%
, as small as wanted in the $C^{r}$-norm. In fact, let us denote, as before,
by $\{y_{k}\}_{k}$\ the periodic orbit of $0$ for $R_{\alpha +\delta _{j}}$.
Let us consider a $C^{\infty }$ function $g:[0,1]\rightarrow \lbrack 0,1]$
such that:

\begin{itemize}
\item $g$ is negative on the each interval $[y_{i},y_{i}+\frac{1}{2k_{j}}]$
and positive on each interval $[y_{i}+\frac{1}{2k_{j}},y_{i+1}]$ (so that $%
g(y_{i}+\frac{1}{2k_{j}})=0$ );

\item $g^{\prime }$ is positive in each interval $[y_{i}+\frac{1}{3k_{j}}%
,y_{i+1}-\frac{1}{3k_{j}}]$ and negative in $[y_{i},y_{i+1}]-[y_{i}+\frac{1}{%
3k_{j}},y_{i+1}-\frac{1}{3k_{j}}]$.
\end{itemize}

Considering $D_{\delta }:{\mathbb{S}}^{1}\rightarrow {\mathbb{S}}^{1}$,
defined by $D_{\delta }(x):=x+\delta g(x)$ $\func{(mod.\; {\mathbb{Z}})}$,
it holds that the iterates of this map send all the space, with the
exception of the set $\Gamma_{\mathrm{rep}}:=\{y_{i}+\frac{1}{2k_{j}}: \;0
\leq i< k_{j}\}$ (which is a repeller), to the set $\Gamma_{ \mathrm{att}%
}:=\{y_{i}: \; 0\leq i< k_{j}\}$ (the attractor). Then, define $\ T_{j}$ by
composing $R_{\alpha +\delta _{j}}$ and $D_{\delta }$, namely 
\begin{equation*}
T_{j}(x):=D_{\delta _{j}}(x+(\delta _{j}+\alpha )).
\end{equation*}

The claim follows by observing that for the map $T_{j}(x)$, both sets \ $%
\Gamma_{\mathrm{att}}$ and $\Gamma_{\mathrm{rep}}$ are invariant and, in
particular, the whole space ${\mathbb{S}}^{1}-\Gamma_{\mathrm{rep}}$ is
attracted by $\Gamma_{\mathrm{att}}$.
\end{proof}

\bigskip

The construction done in the previous proof can be extended to show H\"{o}%
lder behavior for the average of a given \emph{fixed} regular observable. We
show an explicit example of such an observable, with a particular choice of
rotation number $\alpha$.

\begin{proposition}
\label{30}Consider a rotation $R_{\alpha }$ with rotation angle $\alpha
:=\sum_{1}^{\infty }2^{-2^{2i}}$. Let $T_{j}$ be its perturbations as
constructed in Proposition \ref{berlusconi} and let $\mu _{j}$ denote their
invariant measures; recall that $\|T_{j}-R_{\alpha }\|_{C^{k}}\leq 2|\delta
_{j}|=2\sum_{n+1}^{\infty }2^{-2^{2i}}$.\newline
Then, there is an observable $\psi :{\mathbb{S}}^{1}\rightarrow \mathbb{R}$,
with derivative in $L^{2}({\mathbb{S}^1})$, and $C\geq 0$ such that%
\begin{equation*}
\left |\int_{\mathbb{S}^1} \psi d{m}-\int_{\mathbb{S}^1} \psi d\mu _{j}
\right|\geq C\sqrt{\delta _{j}}.
\end{equation*}
\end{proposition}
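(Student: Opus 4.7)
The plan is to exploit the specific super-exponential arithmetic structure of $\alpha=\sum_{i\geq1}2^{-4^i}$ by choosing $\psi$ as a lacunary Fourier series whose active frequencies are exactly the denominators $k_j$ of the natural convergents of $\alpha$. First I would identify these convergents: the partial sum $\alpha_n=\sum_{i=1}^{n}2^{-4^i}$ is a rational $p_n/k_n$ with $k_n=2^{4^n}$ (and $p_n$ odd, since the $i=n$ term contributes $1$ and all others are even, so $\gcd(p_n,k_n)=1$). A direct estimate gives $|\delta_n|=|\alpha-\alpha_n|=\sum_{i\geq n+1}2^{-4^i}\asymp 2^{-4^{n+1}}=k_n^{-4}$, hence $\sqrt{|\delta_n|}\asymp k_n^{-2}$. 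Because $p_j/k_j$ is in lowest terms, the periodic orbit of $0$ under $R_{\alpha+\delta_j}=R_{p_j/k_j}$ is exactly $\{i/k_j:0\leq i<k_j\}$, so by Proposition \ref{berlusconi} the physical measure $\mu_j$ of $T_j$ is the uniform measure on this set of $k_j$-th roots of unity in $\mathbb{S}^1$.

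Next I would set
\begin{equation*}
\psi(x):=\sum_{n\geq 1}a_n\cos(2\pi k_n x),\qquad a_n:=\frac{1}{n\,k_n},
\end{equation*}
and check by Parseval that $\|\psi'\|_{L^2}^2=2\pi^2\sum_{n\geq1}(a_n k_n)^2=2\pi^2\sum_{n\geq1}n^{-2}<\infty$, so $\psi'\in L^2(\mathbb{S}^1)$; the coefficients are summable so $\psi$ itself is a well-defined continuous function with $\int\psi\,dm=0$.

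The heart of the argument is the computation of $\int\psi\,d\mu_j$. Since $\mu_j$ is uniform on $\{i/k_j\}_{0\leq i<k_j}$, the standard root-of-unity identity gives
\begin{equation*}
\int_{\mathbb{S}^1}\cos(2\pi k_n x)\,d\mu_j=\frac{1}{k_j}\sum_{i=0}^{k_j-1}\cos\!\Big(\frac{2\pi i k_n}{k_j}\Big)=\begin{cases}1 & \text{if }k_j\mid k_n,\\ 0 & \text{otherwise.}\end{cases}
\end{equation*}
The chosen lacunarity now pays off: for $n\geq j$ one has $k_n/k_j=2^{4^n-4^j}\in\mathbb{N}$, so $k_j\mid k_n$ for every $n\geq j$. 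Thus
\begin{equation*}
\int\psi\,d\mu_j-\int\psi\,dm=\sum_{n\geq j}a_n\geq a_j=\frac{1}{j\,k_j}.
\end{equation*}
Combining with $\sqrt{|\delta_j|}\leq C'k_j^{-2}$, the desired bound becomes
\begin{equation*}
\Big|\!\int\psi\,d\mu_j-\!\int\psi\,dm\Big|\;\geq\;\frac{1}{j\,k_j}\;\geq\;C\,k_j^{-2}\;\geq\;C\sqrt{|\delta_j|},
\end{equation*}
which holds because $k_j/j=2^{4^j}/j\to\infty$ (and the finitely many small $j$ are absorbed into the constant $C$).

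The routine parts are Parseval and the geometric-sum identity; the only delicate points to verify carefully are (i) that $p_j/k_j$ is genuinely in lowest terms, so the attractor in Proposition \ref{berlusconi} has exactly $k_j$ equispaced points and the Fourier-mode selection is clean, and (ii) that the tail estimate $|\delta_j|\asymp k_j^{-4}$ is correct for this particular $\alpha$, so that the polynomial in $k_j$ produced by the lacunary sum dominates $\sqrt{|\delta_j|}$. Both reduce to one-line base-2 computations, so I do not anticipate a substantive obstacle.
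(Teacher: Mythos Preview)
Your proof is correct and follows essentially the same strategy as the paper's: construct a lacunary cosine series $\psi(x)=\sum_n a_n\cos(2\pi k_n x)$ at the frequencies $k_n=2^{4^n}$, and exploit that $\int\cos(2\pi k_n x)\,d\mu_j$ equals $1$ for $n\geq j$ and $0$ for $n<j$, so that the $j$-th coefficient dominates $\sqrt{\delta_j}\asymp k_j^{-2}$. The only differences are cosmetic --- the paper uses $a_n=k_n^{-2}$ (giving $\int\psi\,d\mu_j\asymp k_j^{-2}$ on the nose) while your choice $a_n=(nk_n)^{-1}$ trades a little regularity for a larger lower bound $(jk_j)^{-1}\gg k_j^{-2}$, and you make explicit the check $\gcd(p_j,k_j)=1$ that the paper uses tacitly.
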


\bigskip

\begin{proof}
Comparing the series with a geometric one, we get that 
\begin{equation*}
\sum_{n+1}^{\infty }2^{-2^{2i}}\leq 2^{-2^{2(n+1)}+1}.
\end{equation*}
By this, it follows 
\begin{equation*}
\|2^{2^{2n}}\alpha \|\leq 2^{-2^{2(n+1)}+1}=\frac{1}{2(2^{2^{2+2n}})}=\frac{1%
}{2(2^{2^{2n}})^{4}}.
\end{equation*}
Since it also holds that $\|2^{2^{2n}}\alpha \|\geq 2^{-2^{2(n+1)}}$, the we
conclude that $\gamma (\alpha )=$ $4$. Following the construction in the
proof of Proposition \ref{berlusconi}, we have that with a perturbation of
size less than $2^{-2^{2(n+1)}+1}$ the angles $\alpha _{j}:=\alpha -\delta
_{j}=\sum_{1}^{j}2^{-2^{2i}}$ generate orbits of period $2^{2^{2j}}$. Now
let us construct a suitable observable which can ``see'' the change of the
invariant measure under this perturbation. Let us consider 
\begin{equation}
\psi (x):=\sum_{i=1}^{\infty }\frac{1}{(2^{2^{2i}})^{2}}\cos (2^{2^{2i}}2\pi
x)  \label{obss}
\end{equation}%
and debote by $\psi _{k}(x):=\sum_{i=1}^{k}\frac{1}{(2^{2^{2i}})^{2}}\cos
(2^{2^{2i}}2\pi x)$ its truncations. Since for the observable $\psi $, the $%
i $-th Fourier coefficient decreases like $i^{-2}$, then $\psi $ has
derivative in $L^{2}({\mathbb{S}^1})$. Let $\{x_{i}\}_{i}$ 
be the periodic orbit of $0$ for the map $R_{\alpha _{j}}$ 
and let $\mu _{j}:=\frac{1}{2^{2^{2i}}}\sum_{i=0}^{\alpha _{j}-1}\delta
_{x_{i}}$ be the physical measure supported on it. Since $2^{2^{2j}}$\
divides $2^{2^{2(j+1)}}$ then\ $\sum_{i=1}^{2^{2^{2j}}}\psi _{k}(x_{i})=0$
for every $k<j$, thus $\int_{\mathbb{S}^1} \psi _{j-1}~d\mu _{j}=0.$ Then 
\begin{eqnarray*}
v_{j}:= &&\int_{\mathbb{S}^1} \psi ~d\mu _{j}\geq \frac{1}{(2^{2^{2j}})^{2}}%
-\sum_{j+1}^{\infty }\frac{1}{(2^{2^{2i}})^{2}} \\
&\geq &2^{-2^{2j+1}}-2^{-2^{2(j+1)}+1}.
\end{eqnarray*}%
For $j$ big enough%
\begin{equation*}
2^{-2^{2j+1}}-2^{-2^{2(j+1)}+1}\geq \frac{1}{2}(2^{-2^{2j}})^{2}.
\end{equation*}%
Summarizing, with a perturbation of size 
\begin{equation*}
\delta _{j}=\sum_{j+1}^{\infty }2^{-2^{2i}}\leq 2\cdot
2^{-2^{2(j+1)}}=2^{-2^{2(j+1)}}=2(2^{-2^{2j}})^{4}
\end{equation*}
we get a change of average for the observable $\psi $ from $\int_{\mathbb{S}%
^1} \psi dm=0$ to $v_{n}\geq \frac{1}{2}(2^{-2^{2j}})^{2}$. Therefore, there
is $C\geq 0$ such that with a perturbation of size $\delta _{j}$, we get a
change of average for the observable $\psi $ of size bigger than $C\sqrt{%
\delta _{j}}.$
\end{proof}

\bigskip

\begin{remark}
Using in (\ref{obss}) {$\frac{1}{(2^{2^{2i}})^{\sigma}}$, for some $\sigma>2$}, instead of $\frac{1%
}{(2^{2^{2i}})^{2}}$, we can obtain a smoother observable. Using rotation
angles with bigger and bigger Diophantine type, it is possible to obtain a
dependence of the physical measure on the perturbation with worse and worse H%
\"{o}lder exponent. Using angles with infinite Diophantine type it is
possible to have a behavior whose modulus of continuity is worse than the H%
\"{o}lder one.
\end{remark}

\bigskip

\section{Linear response and KAM theory}

\label{KAMsection} {In this section, we would like to discuss differentiable
behavior and linear response for Diophantine rotations, under suitable
smooth perturbations. In particular, we will obtain our results by means of
the so-called KAM theory.}

{Let us first start by explaining more precisely,} what linear response
means.\newline
Let $(T_{\delta })_{\delta \geq 0}$ be a one parameter family of maps
obtained by perturbing an initial map $T_{0}$. We will be interested on how
the perturbation made on $T_{0}$ affects some invariant measure of $T_{0}$
of particular interest. For example its physical measure. Suppose hence $%
T_{0}$ has a physical measure $\mu _{0}$ and let $\mu _{\delta }$ be\
physical measures of $T_{\delta }$. \footnote{\label{notap} An invariant
measure $\mu $ is said to be \emph{physical} if there is a positive Lesbegue
measure set $B$ such that for each continuous observable $f $%
\begin{equation*}
\int_{\mathbb{S}^1} f~d\mu =\underset{n\rightarrow \infty }{\lim }\frac{%
f(x)+f(T(x))+...+f(T^{n}(x))}{n+1}
\end{equation*}%
for each $x\in B$ (see \cite{Y}).}

The linear response of the invariant measure of $T_{0}$ under {a} given
perturbation is defined, {if it exists}, by the limit 
\begin{equation}
\dot{\mu}:=\lim_{\delta \rightarrow 0}\frac{\mu _{\delta }-\mu _{0}}{\delta }
\label{LRidea}
\end{equation}
where the meaning of this convergence can vary from system to system. In
some systems and for a given perturbation, one may get $L^{1}$-convergence
for this limit; in other systems or for other perturbations one may get
convergence in weaker or stronger topologies. The linear response to the
perturbation hence represents the first order term of the response of a
system to a perturbation and when it holds, a linear response formula can be
written {as}: 
\begin{equation}
\mu _{\delta }=\mu _{0}+\dot{\mu}\delta +o(\delta )  \label{lin}
\end{equation}%
which holds in some weaker or stronger sense.

We remark that given an observable function $c:X\rightarrow \mathbb{R}$, if
the convergence in \eqref{LRidea} is strong enough with respect to the
regularity \footnote{%
For example, $L^{1}$ convergence in $($\ref{LRidea}$)$ allows to control the
behavior of $L^{\infty }$ observables in $($\ref{LRidea2}$)$, while a weaker
convergence in $($\ref{LRidea}$)$, for example in the Wasserstein norm (see
definition \ref{w})\ allows to get information on the behavior of Lipschitz
obsevable.} of $c$, we get

\begin{equation}
\lim_{t\rightarrow 0}\frac{\int_{\mathbb{S}^1} \ c\ d\mu _{t}-\int_{\mathbb{S%
}^1} \ c\ d\mu _{0}}{t}=\int_{\mathbb{S}^1} \ c\ d\dot{\mu}  \label{LRidea2}
\end{equation}%
showing how the linear response of the invariant measure controls the
behavior of observable averages.\newline

\subsection{Conjugacy theory for circle maps} \label{secconj}

{Let us recall some classical results on smooth linearization of circle
diffeomorphisms and introduce KAM theory}.

Let $\mathrm{Diff}_+^r({{\mathbb{S}}^1})$ denote the set of orientation
preserving homeomorphism of the circle of class $C^r$ with $r\in \mathbb{N}%
\cup \{+\infty, \omega \}$. Let $\mathrm{rot}(f) \in {{\mathbb{S}}^1}$
denote the rotation number of $f$ (see, for example, \cite[Section II.2]%
{Herman} for more properties on the rotation number).\newline

A natural question is to understand when a circle diffeomorphism is
conjugated to a rotation with the same rotation number, namely whether there
exists a homeomorphim $h: {\ \mathbb{S}^1 }\longrightarrow {{\mathbb{S}}^1}$
such that the following diagram commutes: 
\begin{equation*}
\begin{array}{ccc}
{{\mathbb{S}}^1} & \overset{f}{\longrightarrow } & {{\mathbb{S}}^1} \\ 
\uparrow {\small h} &  & \uparrow {\small h} \\ 
{{\mathbb{S}}^1} & \overset{R_{\mathrm{rot (f)}}}{\longrightarrow } & {{%
\mathbb{S}}^1}%
\end{array}%
\end{equation*}
\textit{i.e.}, $h^{-1} \circ f \circ h = R_{\mathrm{rot}(f)}$. Moreover,
whenever this conjugacy exists, one would like to understand what is the
best regularity that one could expect.

\begin{remark}
\textrm{Observe that if $h$ exists, then it is essentially unique, in the
sense that if $h_i: \mathbb{S}^1\longrightarrow {{\mathbb{S}}^1}$, $i=1,2$,
are homeomorphisms conjugating $f$ to $R_{\mathrm{rot }(f)}$, then $h_1
\circ h_2^{-1}$ must be a rotation itself: $h_1\circ h_2^{-1} = R_\beta$ for
some $\beta\in {\mathbb{S}}^1 $ (see \cite[Ch. II, Proposition 3.3.2]{Herman}%
).}
\end{remark}

This question has attracted a lot of attention, dating back, at least, to
Henri Poincar\'e. 

{Let us start by recalling the following result due to Denjoy  \cite{Den} shows that
diffeomorphisms  with irrational rotation number and satisfying some extra mild regularity assumption (for example, $C^2$ diffeomorphisms do satisfy it) are conjugated to irrational
rotations by an homeomorphism.}

\begin{theorem}[Denjoy]
\label{Denteo}Let $T$ be an orientation preserving  diffeomorphism of the
circle with an irrational rotation number $\alpha $ and such that $\log
(T^{\prime })$ has bounded variation. Then there exists a homeomorphism $h:%
\mathbb{S}^{1}\rightarrow \mathbb{S}^{1}$ such that%
\begin{equation*}
{T \circ h= h \circ R_{\alpha }.}
\end{equation*}
\end{theorem}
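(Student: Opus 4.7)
The plan is to follow the classical two-step strategy behind Denjoy's theorem: first show that $T$ has no \emph{wandering intervals} (where the bounded variation hypothesis on $\log T'$ enters in an essential way), and then use this absence together with the order-preserving nature of the rotation number to construct the conjugating homeomorphism $h$.

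For the first step, I would argue by contradiction. Suppose there is an open interval $I \subset \mathbb{S}^1$ whose iterates $\{T^n(I)\}_{n \in \mathbb{Z}}$ are pairwise disjoint. Since their total length is at most $1$, we must have $|T^n(I)|, |T^{-n}(I)| \to 0$ as $n \to \infty$. The key analytic input is a Denjoy-type distortion estimate: whenever integers $m_1, \dots, m_N$ have the property that the intervals $T^{m_1}(J), \dots, T^{m_N}(J)$ are pairwise disjoint for some interval $J$ containing $x$ and $y$, then
\[
\Bigl|\,\log (T^N)'(x) - \log (T^N)'(y)\,\Bigr| \;\leq\; \Var(\log T'),
\]
because the telescoping sum $\sum_j [\log T'(T^{m_j-1}x) - \log T'(T^{m_j-1}y)]$ is bounded by the total variation of $\log T'$ over disjoint arcs. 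Applying this with $N = q_n$, the denominator of the $n$-th convergent of the continued fraction expansion of $\alpha$ (for which it is a standard combinatorial fact about irrational rotations that $I, T(I), \dots, T^{q_n - 1}(I)$ are pairwise disjoint), one obtains a uniform bound on $(T^{q_n})'$ and symmetrically on $(T^{-q_n})'$. Integrating these bounds over $I$ forces $|T^{\pm q_n}(I)|$ to remain comparable to $|I|$ for all $n$, contradicting $|T^{\pm q_n}(I)| \to 0$.

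Once wandering intervals are excluded, the second step is essentially soft. Fix $x_0 \in \mathbb{S}^1$ and set $x_n := T^n(x_0)$. The Poincar\'e classification, which follows from the definition of the rotation number, guarantees that the cyclic order of $\{x_n\}_{n \in \mathbb{Z}}$ on $\mathbb{S}^1$ coincides with the cyclic order of $\{n\alpha \bmod 1\}_{n \in \mathbb{Z}}$. Hence the assignment $h_0(n\alpha \bmod 1) := x_n$ is a well-defined, order-preserving bijection between two countable subsets of $\mathbb{S}^1$; the first is dense because $\alpha$ is irrational, and the second is dense because otherwise the gaps in its closure would yield wandering intervals, which we have just ruled out. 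Monotone extension by continuity produces a continuous surjection $h \colon \mathbb{S}^1 \to \mathbb{S}^1$; the no-wandering-interval property prevents $h$ from having any plateau, so $h$ is injective and therefore a homeomorphism. The conjugacy identity $T \circ h = h \circ R_\alpha$ holds on the dense $R_\alpha$-orbit of $0$ by the very definition of $h_0$, and extends by continuity to all of $\mathbb{S}^1$.

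The main obstacle is unquestionably the no-wandering-intervals step, which is the heart of Denjoy's argument and the only place where the regularity hypothesis on $\log T'$ is used. In particular the choice of the return times $q_n$ from the continued fraction expansion, and the verification that $I, T(I), \dots, T^{q_n - 1}(I)$ are genuinely pairwise disjoint, is the delicate combinatorial ingredient; the distortion bound and the final contradiction are then straightforward consequences. The conjugacy construction in the second step, by contrast, is a standard extension-by-density argument once density of the orbit is in hand.
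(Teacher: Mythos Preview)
The paper does not actually prove this statement: Denjoy's theorem is quoted as a classical result with a reference to Denjoy's original 1932 paper, and is then used as a black box in the proof of Theorem~\ref{stadiff}. So there is no proof in the paper to compare your proposal against.

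That said, your outline is the standard two-step argument (no wandering intervals via a distortion estimate, then upgrade Poincar\'e's semiconjugacy to a conjugacy), and it is essentially correct. One point worth tightening: the distortion bound you state gives that the ratio $(T^{q_n})'(x)/(T^{q_n})'(y)$ is uniformly controlled for $x,y\in I$, not directly a uniform bound on $(T^{q_n})'$ itself. The classical way to close the argument is to combine the forward and backward iterates: since $(T^{q_n})'\bigl(T^{-q_n}(x)\bigr)\cdot (T^{-q_n})'(x)=1$ and the distortion estimate applies on both $I$ and $T^{-q_n}(I)$ (the intervals $T^{-q_n}(I),\dots,T^{q_n-1}(I)$ being pairwise disjoint), one deduces a uniform lower bound on the product $|T^{q_n}(I)|\cdot|T^{-q_n}(I)|$, which is what actually contradicts $|T^{\pm q_n}(I)|\to 0$. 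With that adjustment your sketch is the textbook proof.
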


{
\begin{remark}
Denjoy constructed diffeomorphisms $T$ only of class $C^1$ that are not conjugated to rotations ({\it i.e.}, such that the support of their invariant measure $\mu$ is not the whole ${\mathbb S^1}$). These are usually called in the literature {\it Denjoy-type} diffeomorphisms.
\end{remark}
}

{Some of the first contributions about smooth linearization ({\it i.e.}, obtaining a conjugacy of higher regularity)} were due to V.I. Arnol'd 
\cite{Arnold} and J. Moser \cite{Moser}. These results are in the
perturbative setting and are generally referred to as \textit{KAM theory}.
Namely, they consider perturbations of \textit{Diophantine} rotations 
\begin{equation}  \label{deffeps}
f_\varepsilon(x) = R_\alpha + \varepsilon u(x,\varepsilon)
\end{equation}
and prove that, under suitable regularity assumptions on $u$, there exist $%
\varepsilon_0>0$ (depending on the properties of $\alpha$ and $u$) 
{and a Cantor set  ${\mathcal C} \subset (-\varepsilon_0, \varepsilon_0)$ such that $f_\varepsilon$ is conjugated to a $R_{\mathrm{rot} (f_\varepsilon)}$ for every $\varepsilon \in {\mathcal C}$.} 
{Observe that the conjugacy does not exist in general for an interval of $\varepsilon$, but only for those values of $\varepsilon$ for which the rotation number of $f_\varepsilon$ satisfies
suitable arithmetic properties ({\it e.g.}, it is Diophantine)}.
See below for a more precise statement.

\begin{remark}
{Observe that $f_\varepsilon$ has not necessarily rotation number $\alpha$,
even if one asks that $u(\cdot, \varepsilon)$ has zero average.}
\end{remark}

\begin{remark}
In the analytic setting, KAM theorem for circle diffeomorphisms was firstly
proved by Arnol'd (see \cite[Corollary to Theorem 3, p. 173]{Arnold}),
showing that the conjugation is analytic. In the smooth case, it was proved
by Moser \cite{Moser} under the assumption that $u$ is sufficiently smooth
(the minimal regularity needed was later improved by R\"ussmann \cite%
{Russmann2}). The literature on KAM theory and its recent developments is
huge and we do not aim to provide an accurate account here; for reader's
sake, we limit ourselves to mentioning some recent articles and surveys,
like \cite{BroerSevryuk, DL, Dumas, Massetti, MatherForni, Wayne} and
references therein.
\end{remark}

\smallskip

Later, Herman \cite{Herman} and Yoccoz \cite{Yoccoz,Yoccoz2} provided a
thorough analysis of the situation in the general (non-perturbative)
context. Let us briefly summarize their results (see also \cite%
{EliassonFayadKrikorian} for a more complete account). \newline

\begin{theorem}[Herman \protect\cite{Herman}, Yoccoz \protect\cite{Yoccoz,
Yoccoz2}]
\textcolor{white}{per andare a capo} \label{thmhermanyoccoz}

\begin{itemize}
\item Let $f \in \mathrm{Diff}_+^r({{\mathbb{S}}^1})$ and $\mathrm{rot}(f)
\in \mathcal{D}(\tau)$. If $r>\max\{3, 2\tau-1\}$, then there exists $h\in 
\mathrm{Diff}_+^{r-\tau-\varepsilon}({{\mathbb{S}}^1})$, for every $%
\varepsilon>0 $, conjugating $f$ to $R_{\mathrm{rot (f)}}$.

\item Let $f \in \mathrm{Diff}_+^\infty({{\mathbb{S}}^1})$ and $\mathrm{rot}%
(f) \in \mathcal{D}(\tau)$. Then, there exists $h\in \mathrm{Diff}%
_+^{\infty}({{\mathbb{S}}^1})$ conjugating $f$ to $R_{\mathrm{rot (f)}}$.

\item Let $f \in \mathrm{Diff}_+^\omega({{\mathbb{S}}^1})$ and $\mathrm{rot}%
(f) \in \mathcal{D}(\tau)$. Then, there exists $h\in \mathrm{Diff}%
_+^{\omega}({{\mathbb{S}}^1})$ conjugating $f$ to $R_{\mathrm{rot (f)}}$.%
\newline
\end{itemize}
\end{theorem}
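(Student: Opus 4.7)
The proof plan combines the topological theory of circle diffeomorphisms with a quantitative KAM-type scheme, and then promotes local results to global ones via Herman's a priori bounds. I sketch the strategy in the three cases in turn.

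\textbf{Step 1 (Topological conjugacy and reduction).} The starting point is that every $f$ in the hypothesis has an irrational rotation number (since Diophantine numbers are irrational), and by Denjoy's theorem (Theorem \ref{Denteo}, applicable since $r>3\geq 2$ ensures the $\log(f')$ bounded variation hypothesis) there exists a homeomorphism $h_0$ with $f\circ h_0=h_0\circ R_{\mathrm{rot}(f)}$. Equivalently, $h_0(x)=\mu([0,x])$, where $\mu$ is the unique invariant probability measure of $f$. The problem then becomes to upgrade the regularity of $h_0$ from $C^0$ to $C^{r-\tau-\varepsilon}$, $C^\infty$, or $C^\omega$, depending on the setting.

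\textbf{Step 2 (Local/perturbative KAM scheme).} The core analytic tool is the linearized conjugacy equation. Writing $f=R_\alpha+u$ and seeking $h=\mathrm{id}+\varphi$, the equation $f\circ h=h\circ R_\alpha$ becomes $\varphi(x+\alpha)-\varphi(x)=u(x+\varphi(x))$; to first order this is the cohomological equation $\varphi\circ R_\alpha-\varphi=u$. On Fourier side this is solved by $\hat\varphi_k=\hat u_k/(e^{2\pi ik\alpha}-1)$, and the Diophantine condition $\alpha\in\mathcal D(\tau)$ gives $|e^{2\pi ik\alpha}-1|\geq c|k|^{-\tau}$, so the solution exists with a loss of $\tau+\varepsilon$ derivatives. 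A Newton-type iteration (after subtracting the averaged correction that must be reabsorbed into $\alpha$) then converges super-exponentially in a scale of Banach spaces, because the quadratic decay of the errors overcomes the fixed derivative loss at each step. In the analytic category this is the classical Arnold scheme (decreasing strips of analyticity); in $C^r$ one uses smoothing operators à la Moser--Zehnder. This handles the local version of the theorem when $u$ is already small in the appropriate norm.

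\textbf{Step 3 (Global result via a priori bounds).} The principal obstacle is that $f$ is not assumed close to a rotation, so the KAM scheme does not apply directly. The breakthrough, due to Herman and sharpened by Yoccoz, is to establish a priori bounds on the distortion $\log(f^{q_n})'$ along the sequence of denominators $q_n$ of the best rational approximants of $\alpha$. The key estimates exploit the Denjoy--Koksma inequality (Theorem \ref{DK}) applied to $\log f'$, combined with the combinatorics of the continued fraction expansion and the Diophantine condition: $\log(f^{q_n})'$ is bounded in $C^0$ uniformly in $n$, and, under the regularity hypothesis $r>\max\{3,2\tau-1\}$, in stronger norms. From these bounds one deduces that $h_0$ is already absolutely continuous with Hölder-continuous derivative, which places $f$ in a neighborhood of $R_{\mathrm{rot}(f)}$ in the topology required by Step 2; a bootstrap in Sobolev/Hölder scales then yields the claimed $C^{r-\tau-\varepsilon}$ regularity. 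Yoccoz's renormalization viewpoint, considering the successive first-return maps to shrinking intervals, provides an alternative route that makes the perturbative reduction transparent.

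\textbf{Step 4 (Promotion to $C^\infty$ and $C^\omega$).} For the second item, applying the first item to all finite $r$ gives $h\in C^{r-\tau-\varepsilon}$ for arbitrarily large $r$, hence $h\in C^\infty$. The third item follows from running the analytic KAM scheme of Step 2 directly, once the a priori bounds in Step 3 have reduced the problem to the analytic perturbative setting (extending $f$ to a complex strip and using the $C^\infty$ conjugacy to put it in a small analytic neighborhood of $R_{\mathrm{rot}(f)}$). The hardest step is unquestionably Step 3: the nonperturbative global a priori bounds are where the real depth of the Herman--Yoccoz theory lies, and they require a long and delicate argument that is not well captured by any short outline.
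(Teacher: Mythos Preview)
The paper does not prove this theorem at all: it is stated as a known result with attribution to Herman \cite{Herman} and Yoccoz \cite{Yoccoz,Yoccoz2}, and the paper merely uses it as a black box (e.g.\ in the proof of Theorem~\ref{quantdiff}). So there is no ``paper's own proof'' to compare against.

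That said, your sketch is a reasonable high-level outline of the actual Herman--Yoccoz strategy for the first two items, and you correctly identify Step~3 (the global a~priori bounds) as the heart of the matter. One point that is not quite right is your treatment of the analytic case in Step~4. You propose to use the $C^\infty$ conjugacy to place $f$ in a small \emph{analytic} neighborhood of $R_{\mathrm{rot}(f)}$ and then run the analytic KAM scheme; but a $C^\infty$ conjugacy gives no control on any strip of analyticity, so this reduction does not work as stated. Yoccoz's proof of the analytic case \cite{Yoccoz2} proceeds instead through a direct renormalization argument in the analytic category, with geometric estimates on complex strips that are specific to that setting and do not simply bootstrap from the smooth result. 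Similarly, your Step~3 is honest about being only an outline, but it is worth stressing that ``bounded in stronger norms'' for $\log(f^{q_n})'$ is itself a major theorem requiring the full machinery of \cite{Herman,Yoccoz}; the Denjoy--Koksma inequality alone gives only the $C^0$ bound.
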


\begin{remark}
\textrm{The above results can be generalized to larger classes of rotation
number, satisfying a weaker condition than being Diophantine. Optimal
conditions were studied by Yoccoz and identified in \textit{Brjuno numbers}
for the smooth case and in those satisfying the so-called ${\mathcal{H}}$%
-condition (named in honour of Herman); we refer to \cite{Yoccoz, Yoccoz2}
for more details on these classes of numbers.\newline
}
\end{remark}

\bigskip

\subsection{Linear response for Diophantine circle rotations}

In this subsection we describe how, as a corollary to KAM theory, one can
prove the existence of linear response for Diophantine rotations.\newline

Let us state the following version of KAM theorem, whose proof can be found
in \cite[Theorem 9.0.4]{Vano} (cf. also \cite[Theorem 2]{BroerSevryuk} and \cite{CCdL}). %

\medskip

\begin{theorem}[KAM Theorem for circle diffeomorphisms]
\label{KAMVano} Let $\alpha \in \mathcal{D}({\tau})$, with $\tau>1$ and let
us consider a smooth family of circle diffeomorphisms 
\begin{equation*}
f_\varepsilon(x) = R_\alpha + \varepsilon u(x,\varepsilon) \qquad
|\varepsilon|< 1
\end{equation*}
with

\begin{itemize}
\item[\textrm{(i)}] $u(x,\varepsilon) \in C^{\infty}({\mathbb{S}}^1)$ for
every $|\varepsilon|<1$;

\item[\textrm{(ii)}] the map $\varepsilon \longmapsto u(\cdot, \varepsilon)$
is $C^{\infty}$;

\item[\textrm{(iii)}] $\int_{{\mathbb{S}}^1} u(x,\varepsilon) dx =
A\varepsilon^m + o(\varepsilon^m)$, where $A\neq 0$ and $m\geq 0$.
\end{itemize}

Then, there exists a Cantor set ${\mathcal{C}}\subset (-1,1)$ containing $0$%
, such that for every $\varepsilon \in {\mathcal{C}}$ the map $%
f_{\varepsilon }$ is smoothly conjugated to a rotation $R_{\alpha
_{\varepsilon }}$, with $\alpha _{\varepsilon }\in \mathcal{D}(\tau )$. More
specifically, there exists 
\begin{equation*}
h_{\varepsilon }(x)=x+\varepsilon v(x,\varepsilon )\in C^{\infty }({\mathbb{S%
}}^{1})
\end{equation*}%
such that 
\begin{equation}
\begin{array}{ccc}
{{\mathbb{S}}^{1}} & \overset{f_{\varepsilon }}{\longrightarrow } & {{%
\mathbb{S}}^{1}} \\ 
\uparrow {\small h_{\varepsilon }} &  & \uparrow {\small h_{\varepsilon }}
\\ 
{{\mathbb{S}}^{1}} & \overset{R_{\alpha _{\varepsilon }}}{\longrightarrow }
& {{\mathbb{S}}^{1}}%
\end{array}%
\qquad \Longleftrightarrow \qquad f_{\varepsilon }\circ h_{\varepsilon
}=h_{\varepsilon }\circ R_{\alpha _{\varepsilon }}.  \label{conjugation}
\end{equation}%
Moreover:

\begin{itemize}
\item the maps $\varepsilon \longmapsto h_{\varepsilon }$ and $\varepsilon
\longmapsto \alpha _{\varepsilon }$ are $C^{\infty }$ on the Cantor set ${%
\mathcal{C}}$, in the sense of Whitney;

\item $\alpha_{\varepsilon} = \alpha + A\varepsilon^{m+1} +
o(\varepsilon^{m+1}). $\newline
\end{itemize}
\end{theorem}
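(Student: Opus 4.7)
The plan is to use a standard KAM iteration scheme adapted to track the dependence on the parameter $\varepsilon$. Since $\alpha\in\mathcal{D}(\tau)$, the underlying linear problem of solving the cohomological equation
\[
v(x+\alpha)-v(x)=g(x)-\int_{{\mathbb S}^1}g(y)\,dy
\]
admits, for $g\in C^\infty({\mathbb S}^1)$, a unique zero-mean smooth solution $v$. This is done by Fourier analysis: $\hat v(k)=\hat g(k)/(e^{2\pi i k\alpha}-1)$, where the Diophantine bound gives $|e^{2\pi i k\alpha}-1|\gtrsim |k|^{-\tau}$, so $v$ is smooth but with a loss of roughly $\tau$ derivatives.

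Second, I set up the Newton iteration. We seek $h_\varepsilon=\mathrm{id}+\varepsilon v(\cdot,\varepsilon)$ and $\alpha_\varepsilon$ such that $f_\varepsilon\circ h_\varepsilon = h_\varepsilon\circ R_{\alpha_\varepsilon}$. Expanding to first order gives a cohomological equation with inhomogeneity $u(\cdot,\varepsilon)$; the consistency condition (zero-mean on the right-hand side) forces
\[
\alpha_\varepsilon-\alpha \;=\; \varepsilon\int_{{\mathbb S}^1}u(x,\varepsilon)\,dx + O(\varepsilon^2),
\]
and then $v$ is obtained by inverting the linear problem above. The residual of this step is quadratically small in the size of the perturbation, and iterating with a Nash--Moser type smoothing (to compensate the fixed derivative loss at each step) produces a super-exponentially convergent sequence with smooth limit, exactly as in \cite{Moser,Russmann2,Vano}.

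Third, at each Newton step the rotation number shifts, and convergence requires that the new rotation number remains Diophantine of exponent $\tau$. This forces one to exclude, at each step, a union of small windows around rational resonances; the complement is a Cantor set $\mathcal{C}\subset(-1,1)$. A standard measure estimate shows that the total excluded measure near $0$ is $o(\varepsilon)$, so $0$ is a Lebesgue density point of $\mathcal{C}$. Because each iterate depends smoothly on $\varepsilon$ wherever defined and the estimates are uniform in $\varepsilon$, the limit depends $C^\infty$-smoothly in the Whitney sense on $\mathcal{C}$.

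Finally, the expansion $\alpha_\varepsilon=\alpha+A\varepsilon^{m+1}+o(\varepsilon^{m+1})$ follows by combining hypothesis (iii), which yields $\int u(\cdot,\varepsilon)\,dx=A\varepsilon^m+o(\varepsilon^m)$, with the first-step formula $\alpha_\varepsilon-\alpha=\varepsilon\int u(\cdot,\varepsilon)\,dx+O(\varepsilon^2)$; all subsequent Newton corrections are quadratic in the remaining error and hence contribute to higher order. The hard part of the argument is the interplay between the KAM iteration and the parameter exclusion: one must quantify the small-divisor losses, the Nash--Moser smoothing, and the measure of the removed resonant windows simultaneously, so that Whitney-smooth dependence at $\varepsilon=0$ is preserved while the Diophantine property is maintained along the iteration.
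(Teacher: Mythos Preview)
The paper does not give its own proof of Theorem~\ref{KAMVano}; it is stated as a known result, with the proof delegated to \cite[Theorem 9.0.4]{Vano} (see also \cite{BroerSevryuk} and \cite{CCdL}), and Remark~\ref{remarkteokam} points to \cite[p.~149]{Vano} specifically for the asymptotic expansion of $\alpha_\varepsilon$. So there is nothing in the paper to compare your argument against at the level of proof.

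That said, your outline is the correct one and is essentially the strategy carried out in the cited references: solve the linearized (cohomological) equation via Fourier analysis with a $\tau$-derivative loss controlled by the Diophantine condition, iterate \`a la Newton with Nash--Moser smoothing to recover the loss, excise at each step the parameter windows where the updated rotation number fails the required Diophantine bound (yielding the Cantor set $\mathcal{C}$), and propagate uniform estimates to obtain Whitney-$C^\infty$ dependence on $\varepsilon$. Your derivation of $\alpha_\varepsilon=\alpha+A\varepsilon^{m+1}+o(\varepsilon^{m+1})$ from hypothesis~(iii) and the first-step consistency condition is exactly the mechanism used in \cite{Vano}. One small caveat: as written, your sketch remains a high-level roadmap; turning it into a proof requires the quantitative bookkeeping (choice of smoothing scales, inductive norm estimates, measure of excised intervals) that is the actual content of the references, so if you intend this as a self-contained proof you would need to supply those details or, as the paper does, simply cite them.
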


\medskip

\begin{remark}
\textrm{\label{rm8} Observe that $f_{\varepsilon}$ does not have necessarily
rotation number $\alpha$. In particular, the map $rot:\mathrm{Diff}_{+}^{0}(%
\mathbb{S}^{1})\longrightarrow $}$\mathbb{S}^{1}$\textrm{\ is continuous
with respect to the $C^{0}$-topology (see for example \cite[Ch. II,
Proposition 2.7]{Herman})}
\end{remark}

\begin{remark}
\label{remarkteokam} \hspace{0.1 cm}\newline
\begin{itemize}
\item[\textrm{(i)}] Theorem \ref{KAMVano} is proved in \cite{Vano} in a more
general form, considering also the cases of $u(x,\varepsilon)$ being
analytic or just finitely differentiable (in this case, there is a lower
bound on the needed differentiablity, cf. Theorem \ref{thmhermanyoccoz}). In
particular, the proof of the asymptotic expansion of $\alpha_{\varepsilon}$
appears on \cite[p. 149]{Vano}.

\item[\textrm{(ii)}] One could provide an estimate of the size of this
Cantor set: {\ there exist $M>0$ and $r_0>0$ such that for all $0<r<r_0$ the
set $(-r,r)\cap {\mathcal{C}} $ has lebesgue measure $\geq M r^{\frac{1}{m+1}%
}$ } (see \cite[formula (9.2)]{Vano}).

\item[\textrm{(iii)}] A version of this theorem in the analytic case, can be
also found in \cite[Theorem 2]{Arnold}; in particular, in \cite[Sections 8]%
{Arnold} it is discussed the property of monogenically dependence of the
conjugacy and the rotation number on the parameter.\newline
These results can be extended to arbitrary smooth circle diffeomorphisms
with Diophantine rotation numbers and to higher dimensional tori (see \cite%
{Vano}).
\end{itemize}
\end{remark}

\medskip

Let us discuss how to deduce from this result the existence of linear
response for the circle diffeomorphisms $f_{\varepsilon }$.\newline

\begin{theorem}
\label{KAMandResp} Let $\alpha \in \mathcal{D}({\tau})$, with $\tau>1$ and
let us consider a family of circle diffeomorphisms obtained by perturbing
the rotation $R_\alpha$ in the following way: 
\begin{equation*}
f_\varepsilon(x) = R_\alpha + \varepsilon u(x,\varepsilon) \qquad
|\varepsilon|< 1,
\end{equation*}
where $u(x,\varepsilon) \in C^{\infty}({\mathbb{S}}^1)$, for every $%
|\varepsilon|<1$, and the map $\varepsilon \longmapsto u(\cdot, \varepsilon)$
is $C^{\infty}$.\newline
Then, the circle rotation $R_\alpha$ admits linear response, in the limit as 
$\varepsilon$ goes to $0$, by effect of this family of perturbations.\newline
More precisely, there exists a Cantor set $\mathcal{C}\subset (-1,1)$ such
that 
\begin{equation}  \label{limitlinearresponse}
\lim_{\varepsilon \in \mathcal{C}, \varepsilon \rightarrow 0} \frac{%
\mu_\varepsilon - m}{\varepsilon} = 2\pi i \sum_{n \in \mathbb{Z }\setminus
\{0\}} \left(\frac{n\, \hat{u}(n)}{1- e^{2\pi i n \alpha}}\right) e^{2\pi i
n x} \qquad \mbox{(in the $L^1$-sense)}
\end{equation}
where $\mu_\varepsilon$ denotes the unique invariant probability measure of $%
f_\varepsilon$, for $\varepsilon \in {\mathcal{C}}$, and $\{\hat{u}%
(n)\}_{n\in {\mathbb{Z}}}$ the Fourier coefficients of $u(x,0)$.\newline
\end{theorem}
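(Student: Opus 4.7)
The plan is to deduce Theorem \ref{KAMandResp} as a direct corollary of the KAM Theorem \ref{KAMVano}. Since the perturbation $u(x,\varepsilon)$ is smooth in both variables, Theorem \ref{KAMVano} provides a Cantor set $\mathcal{C}\subset(-1,1)$ containing $0$, a Whitney-smooth family of rotation numbers $\alpha_{\varepsilon}\in\mathcal{D}(\tau)$, and a Whitney-smooth family of smooth conjugacies $h_{\varepsilon}(x)=x+\varepsilon v(x,\varepsilon)$ with $f_{\varepsilon}\circ h_{\varepsilon}=h_{\varepsilon}\circ R_{\alpha_{\varepsilon}}$, defined for $\varepsilon\in\mathcal{C}$. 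For such $\varepsilon$, the map $R_{\alpha_{\varepsilon}}$ is uniquely ergodic with invariant measure $m$, so the unique $f_{\varepsilon}$-invariant probability measure is the pushforward $\mu_{\varepsilon}=(h_{\varepsilon})_{*}m$, whose density with respect to $m$ is $\rho_{\varepsilon}(y)=1/h_{\varepsilon}'(h_{\varepsilon}^{-1}(y))$.

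Next, I would extract the first-order information from the conjugation identity. Writing $f_{\varepsilon}(x)=x+\alpha+\varepsilon u(x,\varepsilon)$ and substituting, the identity $f_{\varepsilon}\circ h_{\varepsilon}=h_{\varepsilon}\circ R_{\alpha_{\varepsilon}}$ becomes
\[
\alpha+\varepsilon v(x,\varepsilon)+\varepsilon u\bigl(x+\varepsilon v(x,\varepsilon),\varepsilon\bigr)=\alpha_{\varepsilon}+\varepsilon v(x+\alpha_{\varepsilon},\varepsilon).
\]
Differentiating at $\varepsilon=0$ (in the Whitney sense along $\mathcal{C}$) and setting $\dot{v}(x):=v(x,0)$, $\dot{\alpha}:=\partial_{\varepsilon}\alpha_{\varepsilon}|_{\varepsilon=0}$, one obtains the cohomological equation
\[
\dot{v}(x+\alpha)-\dot{v}(x)=u(x,0)-\dot{\alpha}.
\]
Integrating over ${\mathbb S}^1$ forces $\dot{\alpha}=\hat{u}(0)$, and a Fourier expansion yields (uniquely, once $\dot{v}$ is normalized to have zero mean)
\[
\hat{v}_{0}(n)=\frac{\hat{u}(n)}{e^{2\pi i n\alpha}-1}\qquad\text{for }n\neq 0.
\]
The Diophantine bound $|e^{2\pi i n\alpha}-1|\geq c|n|^{-\tau}$ together with the faster-than-polynomial decay of $\hat{u}(n)$ guarantees that $\dot{v}\in C^{\infty}({\mathbb S}^{1})$, consistently with Theorem \ref{KAMVano}.

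Finally, since $\rho_{0}\equiv 1$ and $h_{0}=\mathrm{id}$, differentiating $\rho_{\varepsilon}(y)=1/h_{\varepsilon}'(h_{\varepsilon}^{-1}(y))$ at $\varepsilon=0$ along $\mathcal{C}$ gives
\[
\dot{\rho}(x)=-\partial_{x}\dot{v}(x)=-\sum_{n\neq 0}2\pi i n\,\hat{v}_{0}(n)\,e^{2\pi i nx}=2\pi i\sum_{n\in\mathbb{Z}\setminus\{0\}}\frac{n\,\hat{u}(n)}{1-e^{2\pi i n\alpha}}\,e^{2\pi i n x},
\]
which is exactly the right-hand side of \eqref{limitlinearresponse}. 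The identity $\int\dot{\rho}\,dx=0$, required since $\mu_{\varepsilon}$ and $m$ are both probability measures, is automatic since the sum omits $n=0$.

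The main obstacle is upgrading the pointwise/formal derivative computation into the $L^{1}$-convergence stated in \eqref{limitlinearresponse}. The Whitney-smooth dependence of $h_{\varepsilon}$ on $\varepsilon\in\mathcal{C}$ with values in $C^{\infty}({\mathbb S}^{1})$ implies, in particular, that $h_{\varepsilon}'$ depends in a Whitney-$C^{1}$ way on $\varepsilon$ with values in $C^{0}({\mathbb S}^{1})$, and that $h_{\varepsilon}^{-1}\to\mathrm{id}$ uniformly. Taking reciprocals and composing (both operations being continuous on a neighbourhood of $1$, respectively of $\mathrm{id}$, in the $C^{0}$-topology), one deduces that $\varepsilon^{-1}(\rho_{\varepsilon}-1)\to\dot{\rho}$ uniformly on ${\mathbb S}^{1}$ as $\varepsilon\to 0$ in $\mathcal{C}$, hence a fortiori in $L^{1}$. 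Care is needed only to keep $\varepsilon$ restricted to the Cantor set $\mathcal{C}$ throughout, but no further input is required beyond Theorem \ref{KAMVano}.
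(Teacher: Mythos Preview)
Your proof is correct and follows essentially the same approach as the paper: apply the KAM Theorem \ref{KAMVano} to obtain the Cantor set and the Whitney-smooth family of conjugacies $h_\varepsilon$, identify $\mu_\varepsilon=(h_\varepsilon)_*m$ with density $1/h_\varepsilon'(h_\varepsilon^{-1}(y))$, derive the cohomological equation $v(x+\alpha,0)-v(x,0)=u(x,0)-\langle u(\cdot,0)\rangle$ from the conjugation identity, solve it in Fourier, and read off $\dot\rho=-\partial_x v(\cdot,0)$. The only differences are cosmetic: the paper expands the conjugation equation in powers of $\varepsilon$ using the asymptotics $\alpha_\varepsilon=\alpha+A\varepsilon^{m+1}+o(\varepsilon^{m+1})$ from Theorem \ref{KAMVano}, whereas you differentiate directly in the Whitney sense and recover $\dot\alpha=\hat u(0)$ by integrating; and you are somewhat more explicit than the paper about why the convergence of $(\rho_\varepsilon-1)/\varepsilon$ actually takes place in $L^1$ (the paper simply writes $o_{\mathcal C}(\varepsilon)$ uniformly in $x$).
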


\medskip

\begin{remark}
\label{remarkKAM} In this article we focus on the circle;
however, a similar result could be proved for rotations on higher
dimensional tori, by using analogous KAM results in that setting (see for
example \cite{Vano}).
\end{remark}

\medskip

As we have already observed in Remark \ref{rm8}, the rotation number of $%
f_{\varepsilon}$ varies continuously with respect to the perturbation, from
here the need of taking the limit in \eqref{limitlinearresponse} on a Cantor
set of parameters (corresponding to certain Diophantine rotation numbers {for which the KAM algorithm can be applied}).
 { Under the assumption that the perturbation does not change the rotation number, {and this is Diophantine},
then the KAM algorithm can be applied for all values of the parameters $\varepsilon$, hence $\mathcal{C}$ coincides with the whole set of parameters; 
therefore the  limit in \eqref{limitlinearresponse} can be taken in the classical sense.}

\begin{corollary}
\label{corKAM} Under the same hypotheses and notation of Theorem \ref%
{KAMandResp}, if in addition we have that $\mathrm{rot}(f_\varepsilon) =
\alpha$ for every $|\varepsilon|<1$, then there exists linear response
without any need of restricting to a Cantor set and it is given by 
\begin{equation}
\lim_{\varepsilon \rightarrow 0} \frac{\mu_\varepsilon - m}{\varepsilon} =
2\pi i \sum_{n \in \mathbb{Z }\setminus \{0\}} \left(\frac{n\, \hat{u}(n)}{%
1- e^{2\pi i n \alpha}}\right) e^{2\pi i n x} \qquad 
\mbox{(in the
$L^1$-sense)}.
\end{equation}
\end{corollary}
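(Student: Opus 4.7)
The plan is to first upgrade Theorem \ref{KAMVano} to produce a conjugacy valid on the entire parameter interval $(-1,1)$, rather than only on a Cantor set ${\mathcal C}$. The reason the Cantor set appears in the original statement is that when $\mathrm{rot}(f_\varepsilon)$ is allowed to vary with $\varepsilon$, one must restrict to those values of the parameter for which $\mathrm{rot}(f_\varepsilon)$ remains Diophantine so that the cohomological equations appearing at each stage of the KAM scheme admit smooth solutions. Under the additional hypothesis $\mathrm{rot}(f_\varepsilon)\equiv \alpha\in \mathcal{D}(\tau)$, this arithmetic obstruction disappears: the target rotation number is fixed and Diophantine at every step, so the Newton-type iteration converges for every $|\varepsilon|<1$. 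I would thus obtain a smooth family $h_\varepsilon(x)=x+\varepsilon v(x,\varepsilon)$, with $v\in C^\infty({\mathbb S}^1\times(-1,1))$, conjugating $f_\varepsilon$ to $R_\alpha$ in the classical (non-Whitney) sense.

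Since $h_\varepsilon$ conjugates $f_\varepsilon$ to $R_\alpha$, the unique invariant probability measure is $\mu_\varepsilon=(h_\varepsilon)_* m$, giving explicit control of the family. To extract its first-order behavior, I would expand the conjugation identity $f_\varepsilon\circ h_\varepsilon=h_\varepsilon\circ R_\alpha$, subtract $x+\alpha$, divide by $\varepsilon$, and evaluate at $\varepsilon=0$. Setting $v_0(\cdot):=v(\cdot,0)$, the leading-order term is the standard cohomological equation
\[
v_0(x+\alpha)-v_0(x)=u(x,0).
\]
This is solvable in Fourier, since the preservation of the rotation number forces $\int u(x,0)\,dx=0$ (the $n=0$ Fourier coefficient of the left-hand side vanishes identically), yielding $\hat v_0(n)=\hat u(n)/(e^{2\pi i n\alpha}-1)$ for $n\neq 0$. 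The Diophantine hypothesis on $\alpha$ combined with the smoothness of $u(\cdot,0)$ guarantees that the resulting series defines a smooth function (control of the small divisors by polynomial growth versus the faster decay of the Fourier coefficients of $u$).

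To obtain the formula for $\dot\mu$, I would test against smooth observables: by change of variables, $\int\phi\,d\mu_\varepsilon=\int\phi(h_\varepsilon(x))\,dx$, whose derivative at $\varepsilon=0$ equals $\int\phi'(x)\,v_0(x)\,dx=-\int\phi(x)\,v_0'(x)\,dx$. Hence $\dot\mu$ is absolutely continuous with Lebesgue density $-v_0'(x)$; substituting the Fourier expansion of $v_0$ and differentiating term by term gives precisely the stated formula $2\pi i\sum_{n\neq 0}\bigl(n\hat u(n)/(1-e^{2\pi i n\alpha})\bigr)e^{2\pi i nx}$. Finally, to upgrade this to $L^1$-convergence, I would use that the density of $\mu_\varepsilon$ with respect to $m$ equals $1/h_\varepsilon'(h_\varepsilon^{-1}(\cdot))$, which depends smoothly on $\varepsilon$ uniformly in $x$; the difference quotient therefore converges uniformly, and in particular in $L^1$, to $-v_0'$.

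The main obstacle is the first step: carefully justifying that the Cantor set ${\mathcal C}$ of Theorem \ref{KAMVano} can be replaced by the full interval when the rotation number is held constant. This amounts to invoking (or re-running) a version of KAM in which the rotation number of the conjugated system is prescribed \emph{a priori}, rather than read off \emph{a posteriori}; this is a standard variant but must be cited or verified, since the original statement of Theorem \ref{KAMVano} only delivers smoothness in $\varepsilon$ in the sense of Whitney. Once this is in place, the remaining steps are essentially formal calculations with the cohomological equation and Fourier series.
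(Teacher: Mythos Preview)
Your proposal is correct and follows essentially the same route as the paper: the key point in both is that the Cantor set $\mathcal{C}$ in Theorem~\ref{KAMVano} arises precisely as the set of parameters for which $\mathrm{rot}(f_\varepsilon)$ lies in a prescribed Diophantine class, so that when $\mathrm{rot}(f_\varepsilon)\equiv\alpha\in\mathcal{D}(\tau)$ one has $\mathcal{C}=(-1,1)$ and the Whitney derivative becomes a classical one. The paper's argument is shorter only because it cites the explicit description of $\mathcal{C}$ from Vano's construction (as the preimage of a Diophantine set under the rotation-number map) rather than re-running the KAM scheme, and then simply invokes Theorem~\ref{KAMandResp} for the formula rather than re-deriving it; your steps~2--4 reproduce the computation already carried out in the proof of Theorem~\ref{KAMandResp}.
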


\bigskip

{
\begin{proof} {\bf (Corollary \ref{corKAM}).}
As we have remarked above, this corollary easily follows from Theorem \ref{KAMandResp} by observing that 
$\mathrm{rot}(f_\varepsilon) = \alpha   \in \mathcal{D}({\tau})$ for every $|\varepsilon|<1$, hence
$\mathcal{C} \equiv (-1,1)$. In fact, this follows from  \cite[Section 9.2, pp. 147-148]{Vano}: in their notation our parameter $\varepsilon$ corresponds to $\mu$ and their $a(\mu)$ corresponds to our $\mathrm{rot}(f_\varepsilon)$.
In particular, they define the Cantor set as  ${\mathcal C}_F = v^{-1}(D_\Upsilon)$ (see \cite[p.148]{Vano}): in our notation this corresponds to
the values of $\varepsilon \in (-1,1)$ for which $\mathrm{rot}(f_\varepsilon)$ belongs to the a certain set of  Diophantine numbers  that includes  $\alpha$. Since, by hypothesis, $\mathrm{rot}(f_\varepsilon)\equiv \alpha$, it follows that
 ${\mathcal C}\equiv (-1,1)$ and, in particular, the  limit in \eqref{limitlinearresponse} is meant in the classical sense.
\end{proof}
}
\bigskip

Let us now prove Theorem \ref{KAMandResp}.\newline

\begin{proof} {\bf (Theorem \ref{KAMandResp}).} 
First of all, applying Theorem \ref{KAMVano}, it follows that for every $%
\varepsilon \in {\mathcal{C}}$, the map $f_{\varepsilon }:= R_\alpha +
\varepsilon u(x,\varepsilon)$ possesses a unique invariant probability
measure given by 
\begin{equation*}
\mu _{\varepsilon }={h_{\varepsilon }}_{\ast }m
\end{equation*}%
where $m$ denotes the Lebesgue measure on ${{\mathbb{S}}^{1}}$ and ${%
h_{\varepsilon }}_{\ast }$ denotes the push-foward by $h_{\varepsilon }$; in
particular, $\mu _{0}=m$. This measure is absolutely continuous with respect
to $m $ and its density is given by 
\begin{equation}
\frac{d\mu _{\varepsilon }}{dx}(x)=\frac{1}{\partial _{x}h_{\varepsilon
}(h_{\varepsilon }^{-1}(x))}.  \label{density}
\end{equation}%
In fact, if $A$ is a Borel set in ${{\mathbb{S}}^{1}}$, then 
\begin{equation*}
\mu _{\varepsilon }(A)=\int_{A}\mu _{\varepsilon }(dy)=\int_{h_{\varepsilon
}(A)}\partial _{x}(h_{\varepsilon }^{-1})(x)\,dx=\int_{h_{\varepsilon }(A)}%
\frac{dx}{\partial _{x}h_{\varepsilon }(h_{\varepsilon }^{-1}(x))}.
\end{equation*}

Hence, it follows from \eqref{density} that 
\begin{eqnarray}  \label{densitymueps}
\frac{d\mu _{\varepsilon }}{dx}(x) &= & \frac{1}{\partial_x h_{\varepsilon}
(h_{\varepsilon}^{-1}(x))} = \frac{1}{1 + \varepsilon \partial_x v
(h_{\varepsilon}^{-1}(x),0) + o(\varepsilon)}  \notag \\
&=& \frac{1}{1 + \varepsilon \partial_x v(x,0) + o_{\mathcal{C}}(\varepsilon)%
} = 1-\varepsilon \partial_x v(x,0) + o_{\mathcal{C}}(\varepsilon),
\end{eqnarray}
where $o_{\mathcal{C}}(\varepsilon)$ denotes a term that goes to zero faster
than $\varepsilon \in {\mathcal{C}}$, uniformly in $x$.\newline

Then the linear response is given by%
\begin{equation*}
\dot{\mu}=\lim_{\varepsilon \in {\mathcal{C}},\varepsilon \rightarrow 0}%
\frac{\mu _{\varepsilon }-\mu _{0}}{\varepsilon }=\lim_{\varepsilon \in {%
\mathcal{C}},\varepsilon \rightarrow 0}\frac{\mu _{\varepsilon }-m}{%
\varepsilon }
\end{equation*}%
which, passing to densities and using \eqref{densitymueps}, corespond to%
\begin{equation*}
\lim_{\varepsilon \in {\mathcal{C}},\varepsilon \rightarrow 0}\frac{1}{%
\varepsilon }(1-\varepsilon \partial _{x}v(x,0)+o_{0}(\varepsilon
)-1)=-\partial _{x}v(x,0).
\end{equation*}

Giving a formula for the response%
\begin{equation}  \label{linearresponse}
\frac{d\dot{\mu}}{dx}(x)= - \partial_x v (x,0).
\end{equation}
\medskip

Moreover, we can find a more explicit representation formula
{(the above formula, in fact, is somehow implicit, since $v$ depends on $h_\varepsilon$)}. Observe that it
follows from \eqref{conjugation} that $f_\varepsilon \circ h_\varepsilon =
h_\varepsilon \circ R_{\alpha_\varepsilon}$: 
\begin{equation}  \label{boh}
x + \varepsilon v(x,\varepsilon) + \alpha + \varepsilon u(x + \varepsilon
v(x,\varepsilon), \varepsilon) = x + \alpha_\varepsilon + \varepsilon
v(x+\alpha_\varepsilon,\varepsilon).
\end{equation}

Recall, from the statement of Theorem \ref{KAMVano} that 
\begin{equation*}
\alpha_{\varepsilon} = \alpha + A\varepsilon^{m+1} + o(\varepsilon^{m+1}),
\end{equation*}
where $m$ and $A$ are defined by (see item (ii) in Theorem \ref{KAMVano}) 
\begin{equation*}
<u(\cdot,\varepsilon)>:=\int_{{\mathbb{S}}^1} u(x,\varepsilon) dx =
A\varepsilon^m + o(\varepsilon^m).
\end{equation*}

Hence, expanding equation \eqref{boh} in terms of $\varepsilon$ and equating
the terms of order $1$, we obtain the following (observe that $%
\alpha_\varepsilon$ will contribute to the first order in $\varepsilon$ only
if $m=0$ and, therefore, $A= <u(\cdot,0)>:= \int_{{\mathbb{S}}^1} u(x,0) dx
\neq 0$):

\begin{equation}  \label{homologicaleq}
v(x+\alpha, 0) - v(x,0) = u(x,0) - <u(\cdot,0)> \qquad \forall \,x\, \in {{%
\mathbb{S}}^1},
\end{equation}
the so-called \textit{homological equation}.

Observe that it makes sense that we need to subtract to $u(x,0)$ its
average, if this is not zero. In fact, in order for \eqref{homologicaleq} to
have a solution, its right-hand side must have zero average: to see this, it
is sufficient to integrate both sides and use that the Lebesgue measure is
invariant under $R_\alpha$: 
\begin{equation*}
\int_{{\mathbb{S}}^1} u(x,0) \, dx = \int_{\mathbb{S}^1} v(x+\alpha,0) \, dx
- \int_{\mathbb{S}^1} v(x,0) \, dx =0.
\end{equation*}

Let us now find an expression for $v(x,0)$ in Fourier series. In fact, let
us consider: 
\begin{equation*}
v(x,0):= \sum_{n\in \mathbb{Z}} \hat{v}(n) e^{2\pi i n x} \qquad \mathrm{and}
\qquad u(x,0):= \sum_{n\in \mathbb{Z}} \hat{u}(n) e^{2\pi i n x}.
\end{equation*}

In Fourier terms, \eqref{homologicaleq} becomes: 
\begin{equation*}
\sum_{n\in \mathbb{Z}} \hat{v}(n) \left( e^{2\pi i n \alpha} -1 \right) \,
e^{2\pi i n x} = \sum_{n\in \mathbb{Z }\setminus \{0\}} \hat{u}(n) e^{2\pi i
n x}
\end{equation*}
and therefore for $n\neq 0$ 
\begin{equation*}
\hat{v}(n) = \frac{\hat{u}(n)}{e^{2\pi i n \alpha} -1};
\end{equation*}
we do not determine $\hat{v}(0)$, as it should be expected, since $v$ is
determined by \eqref{homologicaleq} only up to constants.

Substituting in \eqref{linearresponse}, we conclude: 
\begin{eqnarray*}
\frac{d \dot{\mu}}{dx}(x) &=& - \partial_x v (x,0) = - 2\pi i \sum_{n\in 
\mathbb{Z }} \,n \,\hat{v}(n) e^{2\pi i n x} \\
&=& 2\pi i \sum_{n \in \mathbb{Z }\setminus \{0\}} \left(\frac{n\, \hat{u}(n)%
}{1- e^{2\pi i n \alpha}}\right) e^{2\pi i n x}.
\end{eqnarray*}
\end{proof}

\section{Beyond rotations: the case of circle diffeomorphisms \label{sec:stabdiff} %
}
{In this section, we want to describe how it is possible to}
extend our previous results from irrational rotations to 
diffeomorphisms of the circle having irrational rotation number.

 We prove
the following:

\begin{theorem}
\label{stadiff}Let $T_{0}$ be an orientation preserving diffeomorphism of
the circle with an irrational rotation number $\alpha $ and such that $\log
(T^{\prime })$ has bounded variation (for example f is of class $C^{2}$).
{Let $\mu_0$ be its unique invariant (absolutely continuous) probability measure (see Theorem \ref{Denteo})}.\
Let $\{T_{\delta }\}_{0\leq \delta \leq \overline{\delta }}$ be a family of
Borel measurable maps of the circle such that%
\begin{equation*}
\sup_{x\in {\mathbb S}^{1}}|T_{0}(x)-T_{\delta }(x)|\leq \delta .
\end{equation*}%
Suppose that for each $0\leq \delta \leq \overline{\delta }$, $\mu _{\delta
} $ is an invariant measure of $T_{\delta }$. Then 
\begin{equation*}
\lim_{\delta \rightarrow 0}\int_{{\mathbb S}^{1}} f~d\mu _{\delta }=\int_{{\mathbb S}^{1}} f~d\mu _{0}
\end{equation*}%
for all $f\in C^{0}(\mathbb{S}^{1}).$
\end{theorem}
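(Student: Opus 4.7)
The plan is to reduce the case of a circle diffeomorphism to the already established case of an irrational rotation (Theorem \ref{statstab}) by conjugating through the homeomorphism produced by Denjoy's theorem. Since the conclusion we want is weak-$\ast$ convergence of measures against continuous test functions, and Theorem \ref{statstab} gives convergence in the Wasserstein norm, we just need to check that (i) the conjugacy preserves the qualitative convergence, and (ii) the perturbed maps, when conjugated, are still uniformly close to a rotation.

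First, I would apply Theorem \ref{Denteo} to obtain a homeomorphism $h:\mathbb{S}^{1}\to\mathbb{S}^{1}$ with $T_{0}\circ h = h\circ R_{\alpha}$. Since $T_0$ is uniquely ergodic (it is semi-conjugate to the uniquely ergodic $R_\alpha$), the unique invariant probability measure of $T_0$ must be $\mu_0 = h_\ast m$. I would then set $\widetilde{T}_{\delta} := h^{-1}\circ T_{\delta}\circ h$ and $\widetilde{\mu}_{\delta} := (h^{-1})_{\ast}\mu_{\delta}$, so that $\widetilde{T}_{0}=R_{\alpha}$ and $\widetilde{\mu}_{\delta}$ is $\widetilde{T}_{\delta}$-invariant.

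Second, I would bound $\sup_{x}|\widetilde{T}_{\delta}(x)-R_{\alpha}(x)|$. After the substitution $y=h(x)$, this reduces to $\sup_{y}|h^{-1}(T_{\delta}(y))-h^{-1}(T_{0}(y))|$, which is controlled by $\omega_{h^{-1}}(\delta)$, where $\omega_{h^{-1}}$ denotes the modulus of continuity of $h^{-1}$ (well-defined and tending to $0$ at $0$ since $h^{-1}$ is continuous on the compact circle). In particular $\widetilde{T}_{\delta}\to R_{\alpha}$ uniformly as $\delta\to 0$, so Theorem \ref{statstab} applies and gives
\begin{equation*}
\lim_{\delta\to 0}\,\|\widetilde{\mu}_{\delta}-m\|_{W}=0.
\end{equation*}

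Third, I would pass from Wasserstein convergence to weak-$\ast$ convergence against arbitrary continuous functions on $\mathbb{S}^{1}$. This is routine: the $\widetilde{\mu}_{\delta}$ are probability measures (uniformly bounded total mass), and Lipschitz functions are dense in $C^{0}(\mathbb{S}^{1})$ in the sup norm (e.g., via Stone--Weierstrass or mollification), so an $\varepsilon/3$ argument converts the Lipschitz test against $\widetilde{\mu}_{\delta}-m$ (which $W$ controls) into a continuous test, yielding $\int g\,d\widetilde{\mu}_{\delta}\to\int g\,dm$ for every $g\in C^{0}(\mathbb{S}^{1})$. Applying this to $g=f\circ h\in C^{0}(\mathbb{S}^{1})$ and using $\mu_{\delta}=h_{\ast}\widetilde{\mu}_{\delta}$, $\mu_{0}=h_{\ast}m$, gives
\begin{equation*}
\int_{\mathbb{S}^{1}}f\,d\mu_{\delta}=\int_{\mathbb{S}^{1}}(f\circ h)\,d\widetilde{\mu}_{\delta}\;\longrightarrow\;\int_{\mathbb{S}^{1}}(f\circ h)\,dm=\int_{\mathbb{S}^{1}}f\,d\mu_{0},
\end{equation*}
which is the desired conclusion.

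The only non-bookkeeping step is the uniform continuity estimate $\|\widetilde{T}_{\delta}-R_{\alpha}\|_{\infty}\le\omega_{h^{-1}}(\delta)$; I expect this to be the mildly delicate point because $h$ is merely a homeomorphism and need not be Hölder, so one cannot hope for quantitative rates here (those will require the further hypotheses used later in Theorem \ref{quantdiff}). For the present \emph{qualitative} statement, however, uniform continuity of $h^{-1}$ on the compact circle is enough, and the rest of the argument is a straightforward transfer of Theorem \ref{statstab} through the conjugacy.
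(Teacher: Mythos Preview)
Your proposal is correct and follows essentially the same approach as the paper: conjugate via the Denjoy homeomorphism $h$, use uniform continuity of $h^{-1}$ on the compact circle to show the conjugated perturbations converge uniformly to $R_\alpha$, apply Theorem~\ref{statstab}, and then transfer the Wasserstein convergence back through $h$ via the density of Lipschitz functions in $C^0(\mathbb{S}^1)$. Your writeup is in fact slightly more explicit than the paper's (in particular about the modulus of continuity $\omega_{h^{-1}}(\delta)$ and the $\varepsilon/3$ argument), but the strategy is identical.
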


{The proof will follow by combining Theorem \ref{statstab} with Denjoy Theorem \ref{Denteo}.\\
}

\begin{proof}[Proof of Theorem \protect\ref{stadiff}]
By Theorem \ref{Denteo} we can coniugate $T_{0}$ with the rotation $%
R_{\alpha }.$ We apply the same coniugation to $T_{\delta }$ for each $%
\delta >0$ obtaining a family of maps {$U_{\delta }:= h \circ T_\delta \circ h^{-1}$}.
 We summarize the
situation in the following diagram%

\begin{equation}
\begin{array}{ccc}
{{\mathbb{S}}^1} & \overset{T_0}{\longrightarrow } & {{\mathbb{S}}^1} \\ 
\downarrow {\small h} &  & \downarrow {\small h} \\ 
{{\mathbb{S}}^1} & \overset{R_{\alpha}}{\longrightarrow } & {{%
\mathbb{S}}^1}%
\end{array}%
\qquad%
\begin{array}{ccc}
{{\mathbb{S}}^1} & \overset{T_\delta}{\longrightarrow } & {{\mathbb{S}}^1} \\ 
\downarrow {\small h} &  & \downarrow {\small h} \\ 
{{\mathbb{S}}^1} & \overset{U_\delta}{\longrightarrow } & {{%
\mathbb{S}}^1}%
\end{array}%
\label{diagrams}
\end{equation}%

Since $h$ is an homeomorphism of a compact space it is uniformly continuous.
This implies that 
\begin{equation*}
\lim_{\delta \rightarrow 0}\sup_{x\in {\mathbb S}^{1}}|R_{\alpha }(x)-U_{\delta
}(x)|=0.
\end{equation*}%
Let $\overline{\mu }_{\delta }:=h_{\ast }\mu _{\delta }.$ These measures
are invariant for $U_{\delta }.$\ \ Then, by Theorem \ref{statstab} we  get%
\begin{equation*}
\lim_{\delta \rightarrow 0}||\overline{\mu }_{\delta }-m||_{W}=0.
\end{equation*}%
This implies (uniformly approximating any continuous fuction with a sequence
of Lipschitz ones) that for each $g\in C^{0}(\mathbb{S}^{1})$%
\begin{equation}
\lim_{\delta \rightarrow 0}\int_{\mathbb S^1} g~d\overline{\mu }_{\delta }=\int_{\mathbb S^1} g~dm.
\label{inte}
\end{equation}%
Now consider $f\in C^{0}(\mathbb{S}^{1})$ and remark that {(using the definition of push-forward of a measure)}  
\begin{eqnarray*}
\int_{\mathbb S^1} f~~d\mu _{\delta } &=&\int_{\mathbb S^1} f\circ h^{-1} \circ h~d\mu _{\delta }=\int_{\mathbb S^1}
f\circ h^{-1}~d\overline{\mu }_{\delta }, \\
\int_{\mathbb S^1} f~d\mu _{0} &=&\int_{\mathbb S^1} f\circ h^{-1}~d\overline{\mu }_{0}.
\end{eqnarray*}%
By \ref{inte}, considering $g=f\circ h^{-1}$ this shows 
\begin{equation*}
\lim_{\delta \rightarrow 0}\int_{\mathbb S^1} f~d\mu _{\delta }=\int_{\mathbb S^1} f~d\mu _{0}.
\end{equation*}
\end{proof}

\bigskip

{Similarly, one can extend the quantitative stability results proved in  Theorem \ref{stst2} to smooth diffeomorphisms of the circle}.

{
\begin{remark}
We point out that the following theorem  holds under much less regularity for $T_0$ (the proof remains the same). In fact,  it is enough that $T_0\in C^r({\mathbb S^1})$ with 
$r$ sufficiently big so that the cojugation $h$ is bi-Lipschitz; compare with Theorem \ref{thmhermanyoccoz}. 
\end{remark}
}

\medskip

\begin{theorem}
\label{quantdiff}Let $T_{0}$ be a $C^{\infty }$ diffeomorphism of the circle
with Diophantine rotation number $\alpha \in \mathcal{D}(\tau )$, for some $\tau>1$. Let $%
\{T_{\delta }\}_{0\leq \delta \leq \overline{\delta }}$ be a family of Borel
measurable maps of the circle such that%
\begin{equation*}
\sup_{x\in {\mathbb S^{1}}}|T_{0}(x)-T_{\delta }(x)|\leq \delta .
\end{equation*}%
Suppose that for each $0\leq \delta \leq \overline{\delta }$, $\mu _{\delta
} $ is an invariant measure of $T_{\delta }$. Then, for each $\ell <{\frac{1%
}{\gamma (\alpha )+1}}$ we have: 
\begin{equation*}
\Vert m-\mu _{\delta }\Vert _{W}=O(\delta ^{\ell }).
\end{equation*}
\end{theorem}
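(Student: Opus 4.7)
The plan is to reduce to Theorem \ref{stst2} via smooth conjugation, just as Theorem \ref{stadiff} reduced to Theorem \ref{statstab}, but now tracking constants quantitatively. The key extra input compared to Theorem \ref{stadiff} is that since $T_0$ is $C^\infty$ with Diophantine rotation number $\alpha\in\mathcal{D}(\tau)$, the Herman--Yoccoz theorem (Theorem \ref{thmhermanyoccoz}) provides a $C^\infty$ conjugacy $h:\mathbb{S}^1\to\mathbb{S}^1$ with $h\circ T_0 = R_\alpha\circ h$. Since $h$ and $h^{-1}$ are smooth on the compact circle, both are Lipschitz; let $L:=\max\{\mathrm{Lip}(h),\mathrm{Lip}(h^{-1}),1\}$.

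First, as in the proof of Theorem \ref{stadiff}, I would define the conjugated family $U_\delta := h\circ T_\delta\circ h^{-1}$, so that $U_0 = R_\alpha$. Using the Lipschitz control on $h$, the hypothesis $\sup_x|T_0(x)-T_\delta(x)|\leq\delta$ yields
\begin{equation*}
\sup_{x\in\mathbb{S}^1}|R_\alpha(x)-U_\delta(x)| \;\leq\; \mathrm{Lip}(h)\cdot\delta \;\leq\; L\delta.
\end{equation*}
Let $\overline{\mu}_\delta := h_*\mu_\delta$; these are $U_\delta$-invariant Borel probability measures, and $\overline{\mu}_0 = h_*\mu_0 = m$ (the unique invariant measure of $R_\alpha$).

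Second, I would apply Theorem \ref{stst2} to the family $\{U_\delta\}$, which is a family of perturbations of $R_\alpha$ of size at most $L\delta$. This yields, for every $\ell<\tfrac{1}{\gamma(\alpha)+1}$,
\begin{equation*}
\|m-\overline{\mu}_\delta\|_W \;=\; O((L\delta)^\ell) \;=\; O(\delta^\ell).
\end{equation*}

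Third, I would transfer this bound back to the original measures. For any test function $g$ with $\mathrm{Lip}(g)\leq 1$ and $\|g\|_\infty\leq 1$, the function $g\circ h^{-1}$ has $\|g\circ h^{-1}\|_\infty\leq 1$ and $\mathrm{Lip}(g\circ h^{-1})\leq L$, so after rescaling by $1/L$ it is admissible in the definition of $\|\cdot\|_W$. Combined with $\overline{\mu}_0=m$ and the change-of-variables identity
\begin{equation*}
\int g\, d\mu_0 - \int g\, d\mu_\delta \;=\; \int (g\circ h^{-1})\, dm - \int (g\circ h^{-1})\, d\overline{\mu}_\delta,
\end{equation*}
one obtains $\|\mu_0-\mu_\delta\|_W \leq L\cdot\|m-\overline{\mu}_\delta\|_W = O(\delta^\ell)$, which is the claimed estimate (understanding that the reference measure in the conclusion is $\mu_0$, the push-forward of $m$ under $h^{-1}$).

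The main conceptual point, rather than an obstacle, is verifying that the conjugation $h$ is bi-Lipschitz so that both the perturbation size and the Wasserstein distance are distorted by at most a constant factor (depending on $T_0$ but not on $\delta$); this is precisely where the smoothness hypothesis on $T_0$ together with the Diophantine condition on $\alpha$ enters, via Herman--Yoccoz. Everything else is a routine reduction to Theorem \ref{stst2}.
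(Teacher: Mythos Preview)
Your proposal is correct and follows essentially the same route as the paper: invoke Herman--Yoccoz to obtain a $C^\infty$ (hence bi-Lipschitz) conjugacy $h$, transport the perturbed family and its invariant measures through $h$, apply Theorem \ref{stst2} to the resulting perturbations of $R_\alpha$, and transport the Wasserstein estimate back. You are in fact slightly more explicit than the paper in tracking the Lipschitz constant $L$ both in the size of the transported perturbation and in the distortion of $\|\cdot\|_W$ under $h_*^{-1}$, and you correctly flag that the conclusion should read $\|\mu_0-\mu_\delta\|_W=O(\delta^\ell)$ rather than $\|m-\mu_\delta\|_W$, exactly as the paper's own proof concludes.
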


\begin{proof}
By Theorem \ref{thmhermanyoccoz} , there exists $h\in \mathrm{Diff}%
_{+}^{\infty }({{\mathbb{S}}^{1}})$ conjugating $T_{0}$ with the rotation $%
R_{\alpha }.$ We apply the same coniugation to $T_{\delta }$ for each $%
\delta >0$ obtaining a family of maps $U_{\delta }.$ \ The situation is
still summarized by $(\ref{diagrams}).$ Since $h$ is a bilipschitz map we
have 
\begin{equation*}
\lim_{\delta \rightarrow 0}\sup_{x\in {\mathbb S^{1}}}|R_{\alpha }(x)-U_{\delta }(x)|=0
\end{equation*}%
and there is a $C\geq 1$ such that for any pair of probability measures $\mu
_{1},\mu _{2}$%
\begin{equation*}
C^{-1}||\mu _{1}-\mu _{2}||_{W}\leq ||h_{\ast }^{-1}\mu _{1}-h_{\ast
}^{-1}\mu _{2}||_{W}\leq C||\mu _{1}-\mu _{2}||_{W}
\end{equation*}%
(and the same holds for $h_{\ast }$). Let $\overline{\mu }_{\delta
}:=h_{\ast }(\mu _{\delta }).$ These measures are invariant for $U_{\delta
}. $\ \ 

By Theorem \ref{stst2} we then get that for each $\ell <{\frac{1}{\gamma
(\alpha )+1}}$ we have: 
\begin{equation*}
\Vert m-\overline{\mu }_{\delta }\Vert _{W}=O(\delta ^{\ell }).
\end{equation*}%
This imply 
\begin{equation*}
\Vert \mu _{0}-\mu _{\delta }\Vert _{W}=||h_{\ast }^{-1}m-h_{\ast }^{-1}%
\overline{\mu }_{\delta }||_{W}=O(\delta ^{\ell }).
\end{equation*}
\end{proof}

\bigskip

{Finally, one can also extend the existence of linear response, along the same lines of Theorem \ref{KAMandResp} and Corollary \ref{corKAM}.
In fact, as observe in Remark \ref{remarkteokam} ({\it iii}), KAM theorem can be extended to sufficiently regular diffeomorphisms of the circle (one can prove it either directly ({\it e.g.}, \cite{Arnold, BroerSevryuk, Moser, Russmann,Vano}), or 
by combining the result for rotations of the circle, with Theorem \ref{thmhermanyoccoz}).
Since the proof can be adapted {\it mutatis mutandis} {(of course, leading to a different expression for the linear response}), we omit further details.}\\

\medskip

\section{Stability under discretization and numerical truncation}\label{sectrunc}

As an application of what discussed in this section we want to address the
following question: 

\medskip

\noindent \textbf{Question:} \emph{Why are numerical simulations generally
quite reliable, in spite of the fact that numerical truncations are quite
bad perturbations, transforming the system into a piecewise constant one,
having only periodic orbits?}

\medskip

Let us consider the uniform grid $E_{N}$ on $\mathbb{S}^{1}$ defined by%
\begin{equation*}
E_{N}=\left\{\frac{i}{N}\in \mathbb{R}/\mathbb{Z}: \quad 1\leq i\leq N
\right\}.
\end{equation*}%
In particular when $N=10^{k}$ the grid represents the points which are
representable with $k$ decimal digits. Let us consider the projection $P_{N}:%
\mathbb{S}^{1}\rightarrow E_{N}$ defined by

\begin{equation*}
P_{N}(x)=\frac{\left\lfloor Nx\right\rfloor }{N},
\end{equation*}
where $\lfloor \cdot \rfloor$ is the floor function.

Given a map $T:$ $\mathbb{S}^{1}\rightarrow \mathbb{S}^{1}$ and let $N\in {%
\mathbb{N}}$; we define its \textit{$N$-discretization} $T_{N}:\mathbb{S}%
^{1}\rightarrow \mathbb{S}^{1}$ \ by%
\begin{equation*}
T_{N}(x):=P_{N}(T(x)).
\end{equation*}%
This is an idealized representation of what happens if we try to simulate
the behavior of $T$ on a computer, having $N$ points of resolution. Of
course the general properties of the systems $T_{N}$ and $T$ are a priori
completely different, starting from the fact that $T_{N}$ is forced to be
periodic. Still these simulations gives in many cases quite a reliable
picture of many aspects of the behavior of $T$, which justifies why these
naive simulations are still much used in many applied sciences.

 Focusing on the statistical properties of the system and on its invariant measures, one can investigate
whether the invariant measures of the system $T_{N}$ (when they exist)
converge to the physical measure of $T$, and in general if they converge to
some invariant measure of $T$. In this case, the statistical properties of $%
T $ are in some sense robust under discretization. Results of this kind have
been proved for some classes of pievewise expanding maps (see \cite{Bo}, 
\cite{GB})\ and for topologically generic diffeomorphisms of the torus (see 
\cite{Gu}, \cite{Gu2}, \cite{mier}).

Since the discretization is a small perturbation in the uniform convergence
topology, a direct application of Theorem \ref{stadiff} gives

\begin{corollary}
\label{discrediffeo}Let $T_{0}$ be an orientation preserving 
diffeomorphism of the circle with an irrational rotation number $\alpha $
and such that $\log (T_{0}^{\prime })$ has bounded variation and let $N\geq
1 $. Let $T_{N}=P_{N}\circ T_{0}$ be the family of maps given by its $%
N-discretizations$. Suppose $\mu _{N}$ is an invariant measure of $T_{N}$.
Then 
\begin{equation*}
\lim_{N\rightarrow \infty }\int_{\mathbb S^1} f~d\mu _{N}=\int_{\mathbb S^1} f~d\mu _{0}
\end{equation*}%
for all $f\in C^{0}(\mathbb{S}^{1}).$
\end{corollary}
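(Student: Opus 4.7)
The plan is to verify that the family of discretizations $\{T_N\}_{N\geq 1}$ fits directly into the hypotheses of Theorem \ref{stadiff}, so that the corollary follows as an immediate consequence.

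First, I would note that by construction, for every $x \in {\mathbb S}^1$ and every $y \in {\mathbb S}^1$ the projection satisfies $|y - P_N(y)| \leq 1/N$, because $P_N(y) = \lfloor Ny \rfloor/N$ is the nearest point of the grid $E_N$ to the left of $y$, and consecutive grid points are at distance $1/N$. Setting $y = T_0(x)$, this gives
\begin{equation*}
\sup_{x \in {\mathbb S}^1} |T_0(x) - T_N(x)| \;=\; \sup_{x \in {\mathbb S}^1} |T_0(x) - P_N(T_0(x))| \;\leq\; \frac{1}{N}.
\end{equation*}
Therefore, reindexing by $\delta_N := 1/N \to 0$, the family $\{T_N\}_{N\geq 1}$ is a family of Borel measurable maps converging uniformly to $T_0$ at rate $\delta_N$.

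Next, since by hypothesis $T_0$ is an orientation preserving diffeomorphism with irrational rotation number $\alpha$ and $\log(T_0')$ of bounded variation, Denjoy's theorem (Theorem \ref{Denteo}) guarantees that $T_0$ is conjugated to $R_\alpha$ by a homeomorphism, and in particular that $T_0$ admits a unique invariant Borel probability measure $\mu_0$. All the hypotheses of Theorem \ref{stadiff} are thus verified for the family $\{T_N\}_N$ with limit map $T_0$.

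Applying Theorem \ref{stadiff} directly to any choice of invariant probability measures $\mu_N$ of $T_N$ (such measures exist, for instance, because $T_N$ takes values in the finite set $E_N$ and must have a periodic orbit whose uniform measure is invariant), we conclude that
\begin{equation*}
\lim_{N \to \infty} \int_{{\mathbb S}^1} f \, d\mu_N \;=\; \int_{{\mathbb S}^1} f \, d\mu_0 \qquad \forall\, f \in C^0({\mathbb S}^1),
\end{equation*}
which is the desired statement. There is no real obstacle here: the only thing to check is the uniform estimate on $T_0 - T_N$, and the rest is a black-box application of Theorem \ref{stadiff}.
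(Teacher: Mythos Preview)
Your proof is correct and follows exactly the same approach as the paper: observe that $\sup_{x}|T_0(x)-T_N(x)|\leq 1/N$ and apply Theorem~\ref{stadiff} directly. The paper's proof is in fact just this one-line observation, so your version is simply a more detailed rendering of the same argument.
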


\begin{proof}
The statement follows by Theorem \ref{stadiff} noticing that 
\begin{equation*}
\sup_{x\in {\mathbb{S}}^{1}}|T_{0}(x)-T_{N}(x)|\leq \frac{1}{N}.
\end{equation*}
\end{proof}

We think this result is very similar to the one shown in Proposition 8.1 of \cite{mier}.
Comparing this kind of results with the ones in \cite{Gu}, we point out that in this
statement we do not suppose the system to be topologically generic and that
the convergence is proved for all discretizations, while in \cite{Gu} the
convergence is proved for a certain sequence of finer and finer
discretizations. \newline

As an application of our quantitative stability result (Theorem \ref{stst2}
and \ref{quantdiff}), we can also provide a quantitative estimate for the
speed of convergence of the invariant measure of the $N$-discretized system
to the original one. We remark that as far as we know, there are no other
similar quantitative convergence results of this kind in the literature.%
\newline

\begin{corollary}
\label{quantdiscrediffeo}Let $T_{0}$ be a $C^{\infty }$ diffeomorphism of
the circle with Diophantine rotation number $\alpha \in \mathcal{D}(\tau ).$
Let $T_{N}=P_{N}\circ T_0$ be the family of its $N$-discretizations.
Suppose $\mu _{N}$ is an invariant measure of $T_{N}$. Then, for each $\ell <%
{\frac{1}{\gamma (\alpha )+1}}$ 
\begin{equation*}
\Vert m-\mu _{N}\Vert _{W}=O(N^{-\ell }).
\end{equation*}
\end{corollary}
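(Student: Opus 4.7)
The plan is to read this corollary off as a direct specialization of the quantitative stability result for smooth circle diffeomorphisms, Theorem~\ref{quantdiff}. The only genuine content to verify is that the $N$-discretization $T_N = P_N \circ T_0$ is a uniform perturbation of $T_0$ of size at most $1/N$, so that Theorem~\ref{quantdiff} applies with $\delta = 1/N$.

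First I would observe that by the very definition of the projection $P_N(y) = \lfloor Ny \rfloor / N$, one has $|y - P_N(y)| \leq 1/N$ for every $y \in \mathbb{S}^1$. Composing with $T_0$, this immediately yields
\[
\sup_{x \in \mathbb{S}^1} |T_0(x) - T_N(x)| \;=\; \sup_{x \in \mathbb{S}^1} |T_0(x) - P_N(T_0(x))| \;\leq\; \frac{1}{N}.
\]
Thus, setting $\delta_N := 1/N$, the family $\{T_N\}_{N \geq 1}$ fits the hypotheses of Theorem~\ref{quantdiff} as a family of Borel measurable maps converging to $T_0$ in the uniform topology at rate $1/N$. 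Since $T_N$ takes values in the finite grid $E_N$ and hence consists of preperiodic orbits, invariant probability measures for $T_N$ certainly exist (supported on its periodic orbits), so the hypothesis on $\mu_N$ is non-vacuous.

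Now I would apply Theorem~\ref{quantdiff} to the family $\{T_N\}$ with $\delta = \delta_N = 1/N$. Since $T_0$ is a $C^\infty$ diffeomorphism of $\mathbb{S}^1$ with Diophantine rotation number $\alpha \in \mathcal{D}(\tau)$, the theorem yields, for every $\ell < 1/(\gamma(\alpha)+1)$,
\[
\|m - \mu_N\|_W \;=\; O(\delta_N^{\ell}) \;=\; O(N^{-\ell}),
\]
which is exactly the conclusion of the corollary.

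There is no substantive obstacle to address: the hard work is entirely upstream, absorbed into Theorem~\ref{quantdiff} (which in turn rests on the Denjoy-Koksma inequality, the discrepancy bound for Diophantine rotations, and Herman-Yoccoz linearization). The corollary is essentially just the remark that spatial discretization is a natural example of a small $C^0$-perturbation, making it amenable to our general quantitative framework.
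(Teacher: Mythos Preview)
Your proof is correct and matches the paper's approach: the paper simply states that the proof is similar to that of Corollary~\ref{discrediffeo}, which amounts to observing $\sup_{x}|T_0(x)-T_N(x)|\leq 1/N$ and then invoking Theorem~\ref{quantdiff} with $\delta=1/N$. Your write-up is slightly more detailed (you also note existence of $\mu_N$), but the content is identical.
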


The proof of Corollary \ref{quantdiscrediffeo} \ is {similar to}
the one of Corollary \ref{discrediffeo}.

\end{document}